\newcommand{\eqnum}{\refstepcounter{equation}\textup{\tagform@{\theequation}}}
\newcommandx{\info}[2][1=]{\todo[linecolor=OliveGreen,backgroundcolor=OliveGreen!25,bordercolor=OliveGreen,#1]{#2}}
\newcommand\abs[1]{\left|#1\right|}
\newcommand\norm[1]{\left\|#1\right\|}
\NewDocumentCommand{\ceil}{s O{} m}{%
  \IfBooleanTF{#1} % starred
    {\left\lceil#3\right\rceil} % \ceil*[..]{..}
    {#2\lceil#3#2\rceil} % \ceil[..]{..}
}
\DeclareMathOperator{\diam}{diam}
 \journalname{myjournal}
\begin{document}

\title{Analysis and Performance of the Barzilai-Borwein Step-Size Rules for Optimization Problems in Hilbert Spaces\thanks{The work of Karl Kunisch has been partially supported the ERC advanced grant 668998 (OCLOC) under the EU's H2020 research program.}
}
%\subtitle{Do you have a subtitle?\\ If so, write it here}

\titlerunning{BB-Method  for Optimization Problems in Hilbert Spaces}        % if too long for running head

\author{Behzad Azmi         \and
        Karl Kunisch %etc.
}

%\authorrunning{Short form of author list} % if too long for running head

\institute{Behzad Azmi \at
              Johann Radon Institute for Computational and Applied Mathematics (RICAM),  Austrian Academy of Sciences\\
             Altenbergerstra{\ss}e 69, A-4040 Linz, Austria\\
              \email{behzad.azmi@ricam.oeaw.ac.at}           %  \\
%             \emph{Present address:} of F. Author  %  if needed
           \and
           Karl Kunisch \at
              Institute for Mathematics and Scientific Computing,  University of Graz\\
              Heinrichstra{\ss}e 36, 8010 Graz, Austria\\
              \email{karl.kunisch@uni-graz.at}\\
             Johann Radon Institute for Computational and Applied Mathematics (RICAM), Austrian Academy of Sciences\\
             Altenbergerstra{\ss}e 69, A-4040 Linz, Austria
}

\date{Received: date / Accepted: date}
% The correct dates will be entered by the editor

\maketitle

\begin{abstract}
Due to simplicity, computational cheapness, and efficiency, the Barzilai and Borwein (BB) gradient method has received a significant amount of attention in different fields of optimization.  In the first part of this paper, based on spectral analysis, R-linear global convergence for the BB-method is proven for strictly convex quadratic problems posed in infinite-dimensional Hilbert spaces. Then this result is strengthened to R-linear local convergence for a class of twice continuously Fre\'chet-differentiable functions.  In the second part,  aiming at problems governed by partial differential equations (PDE), the mesh-independent principle is investigated for the BB-method. The applicability of these results is demonstrated  for three different types of PDE-constrained optimization problems. Numerical experiments illustrate the theoretical results.
%That is, it is demonstrated that the BB-method applied to infinite-dimensional problems and their discretizations, exhibit the same convergence behaviour provided that the discretizations is done properly
\keywords{Barzilai-Borwein method \and  Hilbert spaces \and R-linear rate of convergence \and mesh independence \and PDE-constrained optimization}
% \PACS{PACS code1 \and PACS code2 \and more}
 \subclass{49J20 \and 90C48 \and 65K05 \and 90C52 \and 49M25}
\end{abstract}

\section{Introduction}
First-order (gradient) methods are progressively getting more attention since it has been realized that for a suitable choice of the step-length, using the negative gradient as the search direction may give rise to very efficient algorithmic behavior. As a pioneering work, we can refer to the method proposed by Barzilai and Borwein in \cite{MR967848} abbreviated as the BB-method.  In this work, the authors demonstrated that choosing an appropriate step-length leads to a significant acceleration over the steepest descent method. The BB-method incorporates the quasi-Newton property, by approximating the Hessian matrix by a scalar times the identity which satisfies the secant condition. Despite the simplicity and cheapness, this method has exhibited a surprisingly efficient numerical behaviour. This stimulated a significant amount of research. In the original work \cite{MR967848}, the authors established R-superlinear convergence for two-dimensional strictly convex quadratic problems. Later,  Raydan \cite{MR1225468} and  Dai and Liao \cite{MR1880051} proved, respectively, global convergence and  R-linear convergence rate of the BB-method for any finite-dimensional strictly convex quadratic problem. One of the important feature of this method  is the nonmonotonicity in the values of the objective function and gradient norm. To preserve this feature, some authors \cite{MR1937087,MR1430555} managed to prove the global convergence of the BB-method for finite-dimensional  unconstrained optimization problems based on the nonmonotone line search techniques introduced in \cite{MR849278}.  A deep analysis of the asymptotic behaviour of the BB-method was given in  \cite{MR2166548,MR2144378}. In these works, the surprising computational efficiency of the algorithm in relation to its nonmonotonicity was discussed and several circumstances were presented under which the performance of the BB-method (without globalization) is competitive, or even, superior to conjugate gradient methods. This occurs, for instance,  when a low accuracy for the solution of problem is required, or when significant round-off errors are present,  and the objective functions is made up of a quadratic function plus a small
non-quadratic term (near quadratic). Since then,  inspired by the BB-method, many authors designed and analysed  several  step-length rules for the gradient method by investigating the role and behaviour of the eigenvalues of the Hessian matrix, rather than  the decrease of the function values see e.g., \cite{MR3483103,MR2166548,MR3274863,MR2914422,MR2968263,MR3627452,MR2204453,MR3596866}. Due to simplicity and efficiency, the BB step-sizes  have been being widely used in various fields of  mathematical optimization and applications, including nonsmooth optimization  \cite{MR3543170,MR3628901,MR3564785,MR3258520,MR3258526,MR3627010,MR2678081,MR3305896}, inverse problems \cite{MR3180413,MR2486523,MR3652258,MR3010235,MR3627052,MR3684638,MR3562271,MR3711800,MR3626799}  constrained optimization \cite{MR3016291,MR3757115,MR3569614,MR3397072,MR3041752,MR3604055}.

In this work we aim to study the BB-method within the scope of PDE-constrained optimization. For optimization problems governed by partial different equations,  every  function evaluation is typically carried out through solving a partial differential equation (state equation). Hence function evaluations can be computationally very expensive  and it is desirable to avoid them as far as possible.  Moreover,  due to numerical discretization,  the presence of round-off and truncation errors is inevitable and,  depending on the discretization procedure,  the finite-dimensional approximation for the gradient of the original problem need not coincide with the gradient of the finite-dimensional approximation for the original problem (optimization and discretization do not commute).  A wide range of models arising from industry and natural science are formulated as  optimization problems governed by linear and semilinear partial differential equations.  For these problems, the corresponding reduced formulations lead to infinite-dimensional quadratic and near quadratic unconstrained optimization problems.  In this respect, we mention \cite{AzmiKunisch,MR3721863,azmisemi} in which the BB-method was efficiently employed in the context of the model predictive control for PDEs.  In view of the above discussion, we are motivated to study the BB-method for a more general class of problems, namely, unconstrained problems posed in infinite-dimensional Hilbert spaces. Here we focus on the following unconstrained optimization problem
\begin{equation}
\label{p}
\min_{u \in \mathcal{H}} \mathcal{F}(u),
\end{equation}
 where $\mathcal{F}:\mathcal{H}  \to \mathbb{R}$ is a twice continuously Fr\'echet differentiable function defined on an abstract Hilbert space $\mathcal{H}$ with the inner product $(\cdot,\cdot)$  and its associated norm  $\|\cdot\|$.   The Barzilai-Borwein iterations for solving \eqref{p} are defined by
\begin{equation}
\label{e1}
u_{k+1} = u_k - \frac{1}{\alpha_k}\mathcal{G}_k,
\end{equation}
where $\mathcal{G}_k:=\mathcal{G}(u_k)$ and $\mathcal{G} :\mathcal{H} \to \mathcal{H}$ stands for the gradient of $\mathcal{F}$. This gradient is defined by $\mathcal{G}:=\mathcal{R} \circ \mathcal{F}'$, where $\mathcal{F}': \mathcal{H}\to \mathcal{H'}$ is the first derivative of $\mathcal{F}$,  and $\mathcal{R}: \mathcal{H'} \to \mathcal{H}$ is the Riesz isomorphism,  with $\mathcal{H}'$ denoting  the dual space of $\mathcal{H}$. Thus for every $\delta u \in \mathcal{H}$, we have $\mathcal{F}'(u)\delta u = (\mathcal{G}(u), \delta u)$, with $(\cdot,\cdot)$ denoting the inner product in $\mathcal{H}$.  Furthermore, the step-size $\alpha_k>0$ is chosen according to either
\begin{equation}
\label{e1a}
\alpha^{BB1}_k := \frac{(\mathcal{S}_{k-1},\mathcal{Y}_{k-1})}{(\mathcal{S}_{k-1},\mathcal{S}_{k-1})}, \quad \text{ or } \quad
\alpha^{BB2}_k := \frac{ (\mathcal{Y}_{k-1},\mathcal{Y}_{k-1})}{(\mathcal{S}_{k-1},\mathcal{Y}_{k-1})},
\end{equation}
where $\mathcal{S}_{k-1}:=u_k-u_{k-1}$ and $\mathcal{Y}_{k-1}:=\mathcal{G}_k-\mathcal{G}_{k-1}$. With these specifications we are prepared to specify Algorithm \ref{BBa}  which will be investigated in this paper.
\begin{algorithm}[htbp!]
%\floatname{algorithm}{RHA}
\caption{BB-gradient}\label{BBa}
\begin{spacing}{1.1}
\begin{algorithmic}[1]
\Require Let one of the following initial conditions be satisfied:
\begin{itemize}
\item{C1:} Initial iterates $u_0, u_1 \in \mathcal{H}$ with $u_0\neq u_1$ have been given.
\item{C2:} An initial iterate $u_1 \in \mathcal{H}$ and an initial step-size $\alpha_1$ with $\alpha_1>0$  have been given.
\end{itemize}
\State Set $k=1$.
\State If $\|\mathcal{G}_k\|=0$ stop.
\State  If $k=1$ and C2 holds go to Step 4,  otherwise choose  $\alpha_k$ equal either $\alpha^{BB1}_k$ or $\alpha^{BB2}_k$.
%\State Set $\alpha_k:= \max\{\kappa_k, \mu_{\min}\}$.
\State  Set $u_{k+1} = u_k- \frac{1}{\alpha_k}\mathcal{G}_k$, $k=k+1$, and go to Step 2.
\end{algorithmic}
\end{spacing}
\end{algorithm}
As mentioned before,  numerous results have been published on the BB-method, but, to the best of our knowledge, for optimization problems posed in infinite-dimensional spaces, there still does not exist a rigorous theory. Here we take a step in this direction and, as a first contribution, we analyse  the convergence of Algorithm \ref{BBa}.  Inspired by the result in \cite{MR1880051} and based on the spectral theorem, we establish the R-linear global convergence of  Algorithm \ref{BBa} when it is applied to  strictly convex quadratic problems defined by bounded uniformly positive self-adjoint operators. Then this result will be extended to a local convergence result for twice continuously Fr\'echet differentiable functions.

As the second contribution, we analyse  the mesh independence principle (MIP) for Algorithm \ref{BBa}.  This important property roughly states that the algorithm shows a similar convergence behaviour for the infinite-dimensional problem and its finite-dimensional approximations (discretized problems), independent of the mesh size. This concept of MIP was initially introduced in \cite{MR821912} for  Newton's method. Since then, MIP was studied for many different optimization algorithms and problem formulations. From these,  we can mention generalized equations \cite{MR1852505,MR1080795,MR1177240},  Newton methods \cite{MR2982712,MR2139225}, SQP methods \cite{MR1756894}, shape design problems \cite{MR1680928}, constrained Gauss-Newton methods \cite{MR1202003},  gradient projection methods \cite{MR1149080},  quasi-Newton methods \cite{MR912453,MR1049770,MR1150407}, and semi-smooth Newton methods \cite{MR2369205,MR2085262}. The convergence analysis of Algorithm \ref{BBa} will show  that, depending on the spectrum of the Hessian,   the sequence $\{ \| {\mathcal{G}}_k \|  \}_k$ can be nonmonotone. This is the main reason which distinguishes our analysis from that in \cite{MR912453,MR1049770,MR1150407}.

Our theoretical framework is supported by three optimizations problems with partial differential equations as constraints, including  linear elliptic (Poisson equation),  second-order linear hyperbolic (wave equation), and  semilinear parabolic equations (viscous Burger equation). We show that our results are applicable to these problems and report our numerical experience for them.

The rest of paper is organized as follows: In Section 2,  we first recall some concepts
from the spectral theory for bounded self-adjoint operators. We then deal with the global convergence analysis for strictly convex quadratic functions defined by bounded self-adjoint operators. Relying on this analysis, the local convergence of a class of nonlinear  functions is discussed. Section 3 is devoted for developing the mesh-independent principle for Algorithm \ref{BBa}. In Section 4, the PDE-constrained optimal control problems  alluded to above are investigated. Finally,  Section 5 presents the  numerical results.
\section{Convergence Analysis}
\label{section2}
In this section, we are concerned with the convergence analysis of Algorithm \ref{BBa}. The section is divided in two parts. The first part deals with strictly convex quadratic problems defined by bounded self-adjoint operators. In particular, the case in which the operator is a compact perturbation of the identity will  be treated in more detail.  Strictly convex quadratic problems are of great importance, not only in their own right,  but also as a model to study the behaviour of the algorithm for twice continuously Fr\'echet-differentiable functions in a neighbourhood of strong minima.  In the second part, relying on the analysis of the first part, we discuss the local convergence of Algorithm \ref{BBa} for twice continuously Fr\'echet-differentiable functions with Lipschitz continuous second derivatives.
\subsection{Quadratic Functions}
\subsubsection{General Case}
\label{secNoncompact}
In this subsection, we are concerned with the following quadratic programming  in an abstract Hilbert space $\mathcal{H}$
\begin{equation}
\label{QP}
\tag{QP}
\min_{u \in \mathcal{H}} \mathcal{F}(u):= \frac{1}{2}( \mathcal{A} u, u)-(b ,u) ,
\end{equation}
where $\mathcal{A}: \mathcal{H} \to \mathcal{H}$ is a bounded self-adjoint uniformly positive operator and $b \in \mathcal{H}$.
 In this case $\mathcal{G}_k:=\mathcal{G}(u_k)=\mathcal{A}u_k-b$ and it can easily be shown that
\begin{align}
\alpha^{BB1}_k = \frac{(\mathcal{S}_{k-1}, \mathcal{A}\mathcal{S}_{k-1})}{(\mathcal{S}_{k-1},\mathcal{S}_{k-1})}= \frac{(\mathcal{G}_{k-1}, \mathcal{A}\mathcal{G}_{k-1})}{(\mathcal{G}_{k-1},\mathcal{G}_{k-1})}, \label{e2}\\
\alpha^{BB2}_k = \frac{(\mathcal{S}_{k-1},\mathcal{A}^2\mathcal{S}_{k-1})}{(\mathcal{S}_{k-1},\mathcal{A}\mathcal{S}_{k-1})}=  \frac{(\mathcal{G}_{k-1},\mathcal{A}^2\mathcal{G}_{k-1})}{(\mathcal{G}_{k-1},\mathcal{A}\mathcal{G}_{k-1})}\label{e36},
\end{align}
where we have used that $\mathcal{S}_k = -\frac{1}{\alpha_k}\mathcal{G}_k$. We define the numerical range $\mathcal{W}(\mathcal{A})\subset \mathbb{R}$ of  $\mathcal{A}$ by
\begin{equation*}
\mathcal{W}(\mathcal{A}) := \{ (u, \mathcal{A}u) : u \in \mathcal{H},  \|u\| = 1 \}.
\end{equation*}
This set is convex and contains all the eigenvalues of $\mathcal{A}$. Moreover using  \eqref{e2}, \eqref{e36},  and the fact that
\begin{equation*}
\alpha^{BB2}_k = \frac{(\mathcal{G}_{k-1},\mathcal{A}^2\mathcal{G}_{k-1})}{(\mathcal{G}_{k-1},\mathcal{A}\mathcal{G}_{k-1})} = \frac{(\bar{\mathcal{G}}_{k-1},\mathcal{A} \bar{\mathcal{G}}_{k-1})}{(\bar{\mathcal{G}}_{k-1},\bar{\mathcal{G}}_{k-1})},
\end{equation*}
with $\bar{\mathcal{G}}_{k-1}:= \mathcal{A}^{\frac{1}{2}}\mathcal{G}_{k-1}$,  we infer that $\alpha^{BB1}_{k},  \alpha^{BB2}_{k} \in \mathcal{W}(\mathcal{A})$ for all $k \geq 1$. Therefore, if we define the  strictly positive constants  $\delta_{\inf}$ and  $\delta_{\sup}$ by
\begin{align*}
\delta_{\inf} := \inf\mathcal{W}(\mathcal{A}), & \quad
\delta_{\sup}  := \sup\mathcal{W}(\mathcal{A}),
\end{align*}
we can write
\begin{equation}\label{eqkk11}
 \alpha^{BB1}_{k},  \alpha^{BB2}_{k} \in [\delta_{\inf}, \delta_{\sup}]   \quad  \text{ for all }  k \geq 1.
\end{equation}
To exclude trivial cases we assume throughout that $\delta_{\inf} < \delta_{\sup}$.  For the following analysis we recall some facts from spectral theory. The spectrum $\sigma(\mathcal{A})$ of $\mathcal{A}$ is a closed strict subset of the interval $[ \delta_{\inf}, \delta_{\sup} ]$  with $\delta_{\inf}, \delta_{\sup} \in \sigma(\mathcal{A})$ and since $\mathcal{A}$ is a normal operator, we have $\overline{\mathcal{W}(\mathcal{A})}=\mathbf{conv}(\sigma(\mathcal{A}))=[ \delta_{\inf}, \delta_{\sup} ]$, where $\mathbf{conv}(S)$ denotes the convex hull of the set $S$.  Hence the interval $[ \delta_{\inf}, \delta_{\sup} ]$ is completely determined by the spectrum $\sigma(\mathcal{A})$.

Further, due to the spectral theorem   \cite{MR3112817,MR1787146}, there exists a unique spectral measure  $E$ on $\mathbb{R}$ which is supported on $\sigma(\mathcal{A})$, and whose range is the set of orthogonal projections in $\mathcal{H}$, such that
\begin{equation*}
\mathcal{\mathcal{A}} = \int_{\sigma(\mathcal{A})}\lambda\, dE_{\lambda}.
\end{equation*}
Moreover, for every bounded measurable function $f: \sigma(\mathcal{A}) \to \mathbb{R}$, the operator  $f(\mathcal{A})$ is defined by
\begin{equation}
\label{e18}
f(\mathcal{\mathcal{A}}) = \int_{\sigma(\mathcal{A})}f(\lambda) \, dE_{\lambda},
\end{equation}
and for every $x,y \in \mathcal{H}$ we have
\begin{equation}
\label{spect}
(f(\mathcal{A})x,y) = \int_{\sigma(\mathcal{A})}f(\lambda)d(E_{\lambda}x,y),
\end{equation}
where $d(E_{\lambda}x,y)$ stands for the integration with respect to the Borel measure $A \mapsto (E_{A}x,y)$ where $A \subseteq \sigma(\mathcal{A})$ is an arbitrary Borel set.

From \eqref{e1} we have
 \begin{equation}
 \label{e17}
 \mathcal{G}_{k+1} = \frac{1}{\alpha_k }(\alpha_k \mathcal{I}-\mathcal{A})\mathcal{G}_k   \quad \text{ for all  }  k = 1,2,\dots.
 \end{equation}
 For $\mathcal{G}_1 \in \mathcal{H}$ we find
\begin{equation*}
\mathcal{G}_1 = \int_{\sigma(\mathcal{A})} \, dE_{\lambda} \,\mathcal{G}_1,   \quad \text{ and  } \quad \|\mathcal{G}_1\|^2 = \int_{\sigma(\mathcal{A})} \, d(E_{\lambda}\mathcal{G}_1,\mathcal{G}_1).
\end{equation*}
Using \eqref{e18} and  \eqref{e17}, we have
\begin{equation*}
\mathcal{G}_2 = \frac{1}{\alpha_1} (\alpha_1-\mathcal{A})\mathcal{G}_1 = \int_{\sigma(\mathcal{A})}\frac{1}{\alpha_1} (\alpha_1-\lambda)\, dE_{\lambda}\mathcal{G}_1,
\end{equation*}
and, in a similar manner, we obtain
\begin{equation*}
\mathcal{G}_{k} =   \int_{\sigma(\mathcal{A})}\left[ \prod^{k-1}_{p=1}\left( \frac{\alpha_p-\lambda }{\alpha_p} \right) \right] \, dE_{\lambda}\mathcal{G}_1 \quad \text{ for every }  k = 1,2,\dots.
\end{equation*}
where $\prod^{0}_{p=1} = 1$.  Moreover, we can write for  $k = 1,2,\dots$
\begin{equation}
\label{e114}
\begin{split}
\|\mathcal{G}_{k+1}\|^2 =& \left(\frac{1}{\alpha_k }(\alpha_k \mathcal{I}-\mathcal{A})\mathcal{G}_k, \frac{1}{\alpha_k }(\alpha_k \mathcal{I}-\mathcal{A})\mathcal{G}_k\right) \\
&= \left(\frac{1}{\alpha^2_k }(\alpha_k \mathcal{I}-\mathcal{A})^2\mathcal{G}_k, \mathcal{G}_k\right)= \int_{\sigma(\mathcal{A})}\left( \frac{\alpha_k-\lambda}{\alpha_k} \right)^2 d(E_{\lambda}\mathcal{G}_k, \mathcal{G}_k).
\end{split}
\end{equation}
Similarly, we have
\begin{equation}
\begin{split}
\label{e19}
\|\mathcal{G}_{k}\|^2 &= \left(  \left[ \prod^{k-1}_{p=1}\left( \frac{\alpha_p-\mathcal{A}}{\alpha_p} \right) \right] \mathcal{G}_1,   \left[ \prod^{k-1}_{p=1}\left( \frac{\alpha_p-\mathcal{A}}{\alpha_p} \right)\right] \mathcal{G}_1 \right)\\
& = \left( \left[ \prod^{k-1}_{p=1}\left( \frac{\alpha_p-\mathcal{A}}{\alpha_p} \right)^2 \right]\mathcal{G}_1 , \mathcal{G}_1 \right)= \int_{\sigma(\mathcal{A})} \left[ \prod^{k-1}_{p=1}\left( \frac{\alpha_p-\lambda}{\alpha_p} \right)^2 \right]\, d(E_{\lambda}\mathcal{G}_1, \mathcal{G}_1).
\end{split}
\end{equation}
We define $\gamma_{\mathcal{A}}:= \frac{\delta_{\sup}-\delta_{\inf}}{\delta_{\inf}} $ and  $\rho_{\mathcal{A}} : =\frac{\delta_{\sup}-\delta_{\inf}}{\delta_{\sup}}$. These quantities will be used frequently in the proofs.  First we investigate the special case in which $\delta_{\sup}< 2\delta_{\inf}$. In this case, it can be shown that $\gamma_{\mathcal{A}}<1$.
 \begin{theorem}
 \label{Theo4}
Let $\delta_{\sup}< 2\delta_{\inf}$.  Then the sequence $\{u_k\}_k$ generated by Algorithm \ref{BBa} converges $Q$-linearly to the solution $u^*$  of  \eqref{QP} with the rate $\gamma_{\mathcal{A}}$.
\end{theorem}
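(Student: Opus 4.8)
The plan is to reduce the whole statement to a single uniform operator-norm bound, observing that under the hypothesis $\delta_{\sup}<2\delta_{\inf}$ this bound equals $\gamma_{\mathcal{A}}<1$.

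First I would dispose of the degenerate cases and fix notation. Since $\mathcal{A}$ is bounded, self-adjoint and uniformly positive, $\mathcal{A}^{-1}$ exists and is bounded, \eqref{QP} has the unique solution $u^*=\mathcal{A}^{-1}b$, and $\mathcal{G}_k=\mathcal{A}u_k-b=\mathcal{A}(u_k-u^*)$. If the algorithm stops at some step $k$ because $\|\mathcal{G}_k\|=0$, then $u_k=u^*$ and there is nothing to prove; hence assume $\mathcal{G}_k\neq 0$ for all $k$. Then $\mathcal{S}_{k-1}=-\tfrac1{\alpha_{k-1}}\mathcal{G}_{k-1}\neq 0$ (respectively $\mathcal{S}_0=u_1-u_0\neq 0$ under C1), and $(\mathcal{S}_{k-1},\mathcal{Y}_{k-1})=(\mathcal{S}_{k-1},\mathcal{A}\mathcal{S}_{k-1})\geq\delta_{\inf}\|\mathcal{S}_{k-1}\|^2>0$, so both step-size rules in \eqref{e1a} are well defined, and by the discussion leading to \eqref{eqkk11} we have $\alpha_k\in[\delta_{\inf},\delta_{\sup}]$ for every $k$, irrespective of which rule is chosen at each iteration.

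Next I would convert \eqref{e17} into a recursion for the error. Applying the bounded operator $\mathcal{A}^{-1}$, which commutes with $\alpha_k\mathcal{I}-\mathcal{A}$, gives
\begin{equation*}
u_{k+1}-u^*=\mathcal{A}^{-1}\mathcal{G}_{k+1}=\Bigl(\mathcal{I}-\tfrac1{\alpha_k}\mathcal{A}\Bigr)(u_k-u^*).
\end{equation*}
The operator $\mathcal{I}-\tfrac1{\alpha_k}\mathcal{A}$ is self-adjoint, so its norm equals its spectral radius, and by the spectral mapping theorem $\sigma\bigl(\mathcal{I}-\tfrac1{\alpha_k}\mathcal{A}\bigr)\subseteq[\,1-\delta_{\sup}/\alpha_k,\ 1-\delta_{\inf}/\alpha_k\,]$. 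Using $\alpha_k\in[\delta_{\inf},\delta_{\sup}]$, elementary interval arithmetic gives $1-\delta_{\sup}/\alpha_k\in[-\gamma_{\mathcal{A}},0]$ and $1-\delta_{\inf}/\alpha_k\in[0,\rho_{\mathcal{A}}]$; since $\rho_{\mathcal{A}}\leq\gamma_{\mathcal{A}}$ (immediate from $\delta_{\inf}\leq\delta_{\sup}$), this yields $\bigl\|\mathcal{I}-\tfrac1{\alpha_k}\mathcal{A}\bigr\|\leq\gamma_{\mathcal{A}}$, hence $\|u_{k+1}-u^*\|\leq\gamma_{\mathcal{A}}\|u_k-u^*\|$ for all $k$. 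Because the hypothesis $\delta_{\sup}<2\delta_{\inf}$ is precisely $\gamma_{\mathcal{A}}<1$, the sequence converges $Q$-linearly to $u^*$ with rate $\gamma_{\mathcal{A}}$, as claimed.

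I do not expect a genuinely hard step here. The only points that require care are the elementary verification that $\max\{\,|1-\delta_{\inf}/\alpha_k|,\ |1-\delta_{\sup}/\alpha_k|\,\}\leq\gamma_{\mathcal{A}}$ on the entire admissible range of $\alpha_k$, and the correct use of the identity between operator norm and spectral radius for the self-adjoint operator $\mathcal{I}-\tfrac1{\alpha_k}\mathcal{A}$; alternatively, one can argue directly from the spectral representation \eqref{e114} by estimating the integrand pointwise on $\sigma(\mathcal{A})\subseteq[\delta_{\inf},\delta_{\sup}]$.
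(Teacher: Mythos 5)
Your proof is correct and rests on the same key estimate as the paper's, namely that $\lvert 1-\lambda/\alpha_k\rvert\leq\gamma_{\mathcal{A}}$ uniformly for $\lambda\in\sigma(\mathcal{A})$ and $\alpha_k\in[\delta_{\inf},\delta_{\sup}]$; packaging this as the operator-norm bound $\lVert\mathcal{I}-\tfrac1{\alpha_k}\mathcal{A}\rVert\leq\gamma_{\mathcal{A}}$ is just a reformulation of the paper's pointwise estimate of the spectral integrand in \eqref{e114}. If anything, applying the contraction directly to $u_{k}-u^*$ matches the theorem's Q-linear claim for $\{u_k\}_k$ more immediately than the paper's argument, which works with $\|\mathcal{G}_k\|$ and defers the error recursion to \eqref{e115}.
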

\begin{proof}
Recall that by \eqref{e114},  we have for $k\geq 1$ that
\begin{equation}
\label{e19a}
\begin{split}
\|\mathcal{G}_{k+1}\|^2= \int_{\sigma(\mathcal{A})} \left( \frac{\alpha_k-\lambda}{\alpha_k} \right)^2 \, d(E_{\lambda}\mathcal{G}_k, \mathcal{G}_k).
\end{split}
\end{equation}
Since $\delta_{\sup} < 2\delta_{\inf}$, it follows for every $k\geq 1$  and $\lambda \in \sigma(\mathcal{A})$ that
\begin{equation}
\label{e20}
\abs{\frac{\alpha_k-\lambda}{\alpha_k}}^2 \leq \left( \frac{\delta_{\sup}-\delta_{\inf}}{\delta_{\inf}} \right)^2=\gamma^2_{\mathcal{A}} < 1.
\end{equation}
Using \eqref{e19a} and \eqref{e20}, we obtain
\begin{equation}
\label{e110}
\begin{split}
\|\mathcal{G}_{k+1}\|^2 \leq & \left( \frac{ \delta_{\sup}-\delta_{\inf}}{\delta_{\inf}}\right)^{2} \int_{\sigma(\mathcal{A})} \, d(E_{\lambda}\mathcal{G}_k, \mathcal{G}_k) =  \gamma^2_{\mathcal{A}} \norm{\mathcal{G}_k}^2  \quad     \text{ for every } k\geq 1.
\end{split}
\end{equation}
Therefore, we can conclude that
\begin{equation*}
\|\mathcal{G}_{k+1}\|^2 \leq  \gamma^{2k}_{\mathcal{A}} \norm{\mathcal{G}_1}^2  \quad     \text{ for every } k\geq 1,
\end{equation*}
and this completes the proof.
\qed \end{proof}
If we lift the condition $\delta_{\sup} <  2\delta_{\inf}$, we attain the following result.
\begin{theorem}
\label{RConvergence}
Let $\{  u_k \}_k$ be the sequence generated by Algorithm \ref{BBa} for finding the global minimum $u^*$ of $\eqref{QP}$. Then either $u_k = u^*$ for a finite $k$, or the sequence  $\{   u_k \}_k$ converges $R$-linearly to $u^*$.
\end{theorem}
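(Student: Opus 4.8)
The plan is to adapt the finite-dimensional argument of Dai and Liao \cite{MR1880051} to the spectral-integral setting, the new point being that eigenvector components are replaced by masses of the spectral measure of $\mathcal{A}$ carried by a fixed finite family of spectral bands. If $\mathcal{G}_k=0$ for some finite $k$ the first alternative holds, so assume $\mathcal{G}_k\neq0$ for all $k$. Choose an integer $m\geq1$ with $(\delta_{\sup}/\delta_{\inf})^{1/m}<2$, put $r:=(\delta_{\sup}/\delta_{\inf})^{1/m}\in(1,2)$ and $t_l:=\delta_{\inf}\,r^{l}$, so that $\delta_{\inf}=t_0<t_1<\dots<t_m=\delta_{\sup}$ and $t_l<2t_{l-1}$ for every $l$. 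Let $E$ denote the spectral measure of $\mathcal{A}$, set $J_1:=[\delta_{\inf},t_1]$ and $J_l:=(t_{l-1},t_l]$ for $2\leq l\leq m$, and define the band energies $\Gamma^{(l)}_k:=\norm{E(J_l)\mathcal{G}_k}$. Since the $J_l$ partition $\sigma(\mathcal{A})$ we have $\norm{\mathcal{G}_k}^2=\sum_{l=1}^m(\Gamma^{(l)}_k)^2$, and since $u_k-u^*=\mathcal{A}^{-1}\mathcal{G}_k$ with $\norm{\mathcal{A}^{-1}}\leq\delta_{\inf}^{-1}$, the theorem reduces to showing that each $\Gamma^{(l)}_k$ tends to $0$ $R$-linearly with a common rate. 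Fix once and for all a number $\eta$ with $1/2<\eta<1/r$, which exists because $r<2$.

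Two one-band estimates drive the argument, both coming from the restriction of \eqref{e114} to $J_l$, i.e. $(\Gamma^{(l)}_{k+1})^2=\int_{J_l}\bigl(\tfrac{\alpha_k-\lambda}{\alpha_k}\bigr)^2d(E_\lambda\mathcal{G}_k,\mathcal{G}_k)$. \textbf{Good step.} If $\alpha_k\geq\eta t_l$, then for $\lambda\in J_l$ one has $|\alpha_k-\lambda|/\alpha_k\leq1-t_{l-1}/\alpha_k\leq\rho_{\mathcal{A}}$ when $\lambda\leq\alpha_k$, and $|\alpha_k-\lambda|/\alpha_k\leq t_l/\alpha_k-1\leq\eta^{-1}-1$ when $\lambda>\alpha_k$; hence $\Gamma^{(l)}_{k+1}\leq\bar q\,\Gamma^{(l)}_k$ with $\bar q:=\max\{\rho_{\mathcal{A}},\,\eta^{-1}-1\}<1$, a rate independent of $l$ and of whether $\alpha_k$ equals $\alpha^{BB1}_k$ or $\alpha^{BB2}_k$. \textbf{Bad step.} If $\alpha_k<\eta t_l$, we use \eqref{e2}--\eqref{e36}: writing $\nu_{k-1}(\cdot):=\norm{E(\cdot)\mathcal{G}_{k-1}}^2$, the choice $\alpha_k=\alpha^{BB1}_k$ gives $\int(\lambda-\eta t_l)\,d\nu_{k-1}<0$, while $\alpha_k=\alpha^{BB2}_k$ gives the same inequality with $d\nu_{k-1}$ replaced by $\lambda\,d\nu_{k-1}$. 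Splitting the integral at $\eta t_l$ and using $\eta t_l<t_{l-1}$, so that $\{\lambda<\eta t_l\}\subseteq J_1\cup\dots\cup J_{l-1}$ and $J_l\cup\dots\cup J_m\subseteq\{\lambda\geq\eta t_l\}$, one obtains in either case $(\Gamma^{(l)}_{k-1})^2\leq C_l\sum_{j<l}(\Gamma^{(j)}_{k-1})^2$ for an explicit constant $C_l$ depending only on $\delta_{\inf},\delta_{\sup},m,\eta$. Finally, \eqref{e114} together with \eqref{eqkk11} always yields the crude bound $\Gamma^{(l)}_{k+1}\leq\gamma_{\mathcal{A}}\,\Gamma^{(l)}_k$.

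Now fix any $c\in(\bar q,1)$; the claim is that $\Gamma^{(l)}_k\leq M_l\,c^k$ for all $k$ and a suitable $M_l>0$, proved by induction on $l$. For $l=1$ every step is good, because $\alpha_k\geq\delta_{\inf}\geq\eta t_1$ (here $\eta r<1$), so $\Gamma^{(1)}_k\leq\bar q^{\,k-1}\Gamma^{(1)}_1$ and the bound holds. Assume it for all indices below $l$. Given $k$, let $k_0$ be the largest index in $\{2,\dots,k\}$ at which a bad step for band $l$ occurs; if there is none, $\Gamma^{(l)}_k\leq\bar q^{\,k-1}\Gamma^{(l)}_1$ and we are done. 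Otherwise the steps $k_0+1,\dots,k$ are all good for band $l$, so $\Gamma^{(l)}_k\leq\bar q^{\,k-k_0}\Gamma^{(l)}_{k_0}\leq\bar q^{\,k-k_0}\gamma_{\mathcal{A}}\,\Gamma^{(l)}_{k_0-1}$, and the bad-step estimate together with the induction hypothesis gives $\Gamma^{(l)}_{k_0-1}\leq\sqrt{C_l}\,\bigl(\sum_{j<l}M_j^2\bigr)^{1/2}c^{\,k_0-1}$. Since $\bar q<c$, this yields $\Gamma^{(l)}_k\leq M_l\,c^k$ with $M_l:=\max\bigl\{\Gamma^{(l)}_1/c,\ \gamma_{\mathcal{A}}\sqrt{C_l}\,\bigl(\sum_{j<l}M_j^2\bigr)^{1/2}/c\bigr\}$. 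Summing over $l$ gives $\norm{\mathcal{G}_k}\leq\bigl(\sum_{l=1}^m M_l^2\bigr)^{1/2}c^k$, hence $\norm{u_k-u^*}\leq\delta_{\inf}^{-1}\bigl(\sum_{l=1}^m M_l^2\bigr)^{1/2}c^k$, which is the asserted $R$-linear rate.

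The delicate point is the bad-step estimate: one must turn the scalar inequality ``$\alpha_k<\eta t_l$'', a statement about a Rayleigh-type quotient of $\mathcal{G}_{k-1}$, into the quantitative fact that the spectral mass of $\mathcal{G}_{k-1}$ in the band $J_l$ is dominated by the mass in the lower bands, uniformly over the two step-size rules; and then the bookkeeping that upgrades ``geometric contraction of $\Gamma^{(l)}$ at every step except the finitely many expansion steps, each of which is itself geometrically small by the induction hypothesis'' into a genuine $R$-linear bound is precisely what forces the induction to be organised band by band, starting from the bottom of the spectrum.
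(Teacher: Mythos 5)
Your argument is correct in substance, and it rests on the same engine as the paper's proof --- an induction up a finite partition of $\sigma(\mathcal{A})$ into spectral bands, driven by the fact that a small BB step-size forces the spectral mass of the current gradient in the corresponding band to be dominated by the mass in the lower bands --- but the way you organize that engine is genuinely different. The paper partitions $[\delta_{\inf},\delta_{\sup}]$ into intervals of uniform length $\eta<\delta_{\inf}/2$ and works with the contrapositive of your bad-step estimate: Lemma \ref{alem1} (see \eqref{e27}) shows that if band $\ell+1$ stays large relative to the cumulative lower-band mass then $\alpha_{k+j+1}\geq\tfrac23 a_{\ell+1}$ and the band contracts, hence must drop within a computable delay $\Theta$; Lemmas \ref{lem4} and \ref{R-1} then chain these delays, with carefully pre-chosen constants $\zeta_{\ell}$, into a uniform period $m$ with $\|\mathcal{G}_{k+m}\|\leq\tfrac12\|\mathcal{G}_k\|$, from which $R$-linearity follows. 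You instead use a geometric partition with ratio $r<2$, which makes the good-step contraction factor $\bar q=\max\{\rho_{\mathcal{A}},\eta^{-1}-1\}$ uniform across bands, and run the per-step good/bad dichotomy directly to produce explicit geometric envelopes $M_l c^k$ band by band; this bypasses the uniform halving period and the delicate $j_1,j_2,j_3$ bookkeeping inside Lemma \ref{lem4} entirely, at the price of losing the intermediate statement \eqref{e55}, which the paper reuses in the nonlinear local analysis (Theorem \ref{Theo3}) and in the mesh-independence result (Theorem \ref{theo5}). Three small repairs are needed but none is structural: (i) your indices are off by one in the induction step --- a bad step means $\alpha_{k_0}<\eta t_l$, and since $\alpha_{k_0}$ is a Rayleigh-type quotient of $\mathcal{G}_{k_0-1}$ by \eqref{e2}--\eqref{e36}, the domination estimate controls $\Gamma^{(l)}_{k_0-1}$, after which \emph{two} crude factors of $\gamma_{\mathcal{A}}$ (not one) are needed before the good steps take over, exactly as in the paper's \eqref{e32}; this only changes $M_l$; (ii) under initialization C2 the step-size $\alpha_1$ is arbitrary, so neither the crude bound nor the bad-step estimate applies to the very first transition and the induction should start from $\mathcal{G}_2$ (a wrinkle the paper shares); (iii) in the no-bad-step case you should bound $\Gamma^{(l)}_1\leq\|\mathcal{G}_1\|$ so that $M_l$ depends only on $\|\mathcal{G}_1\|$ and the spectral data.
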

The proof requires several lemmas and will be given in the remainder of this subsection.  First, we need to define some quantities that will be used throughout the results. For any given $\eta>0$, we denote $a_{i}: = \delta_{\inf}+(i-1)\eta$ for every $i $ with $1 \leq i \leq n^{u}_{\eta}$, and
\begin{equation*}
  b_{i}: =  \begin{cases}
 \delta_{\inf}+i\eta  & \text{ for } 1 \leq i \leq n^{u}_{\eta}-1,\\
 \delta_{\sup}           & \text{ for }  i = n^{u}_{\eta},
\end{cases}
\end{equation*}
where $n^{u}_{\eta}: =\lfloor\frac{\delta_{\sup}-\delta_{\inf}}{\eta} \rfloor +1$. Then, clearly,  $b_{i-1} = a_{i}$ for every $i =2,\dots, n^{u}_{\eta}$ and we can define the following family of pairwise disjoint intervals
\begin{equation}
\label{e30}
I_i =
\begin{cases}
[a_i, b_i)  & \text{ for } 1 \leq i \leq n^{u}_{\eta}-1,\\
[a_{n^{u}_{\eta}}, b_{n^{u}_{\eta}}] & \text{ for }  i = n^{u}_{\eta}.
\end{cases}
\end{equation}
By construction it is clear that $| I_i| \leq \eta$ for every $i = 1,\dots,n^{u}_{\eta}$, and
\begin{equation}
\label{e126}
\sigma(\mathcal{A}) \subseteq [\delta_{\inf} ,\delta_{\sup}] = \bigcup^{n^{u}_{\eta}}_{i = 1} I_i.
\end{equation}
For  $i=1,\cdots,n^{u}_{\eta}$, we define
\begin{equation}\label{e29}
(g^{k+1}_{i})^2: = \int_{I_i} \left[ \prod^{k}_{p=1}\left( \frac{\alpha_p-\lambda}{\alpha_p} \right)^2 \right]\, d(E_{\lambda}\mathcal{G}_1, \mathcal{G}_1),
\end{equation}
and attain
\begin{equation}
\label{e21}
\begin{split}
\|\mathcal{G}_{k+1}\|^2 &= \int_{\sigma(\mathcal{A})} \left[ \prod^{k}_{p=1}\left( \frac{\alpha_p-\lambda}{\alpha_p} \right)^2 \right]  \, d(E_{\lambda}\mathcal{G}_{1}, \mathcal{G}_{1}) = \int^{\delta_{\sup}}_{\delta_{\inf}} \left[ \prod^{k}_{p=1}\left( \frac{\alpha_p-\lambda}{\alpha_p} \right)^2 \right]  \, d(E_{\lambda}\mathcal{G}_{1}, \mathcal{G}_{1}) \\
&= \sum^{n^{u}_{\eta}}_{i = 1} \int_{I_i}  \left[ \prod^{k}_{p=1}\left( \frac{\alpha_p-\lambda}{\alpha_p} \right)^2 \right] \, d(E_{\lambda}\mathcal{G}_{1}, \mathcal{G}_{1}) = \sum^{n^{u}_{\eta}}_{i=1} (g^{k+1}_i )^2.
\end{split}
\end{equation}
Moreover, we define
\begin{equation}
\label{e109}
G(k,\ell):= \sum^{\ell}_{i=1} (g^k_{i})^2   \quad \text{ for every }  k\geq 1  \text{ and } 1 \leq \ell \leq n^{u}_{\eta},
\end{equation}
where $n^{u}_{\eta}$ is defined with respect to an interval length $\eta > 0$, and  $g^k_i$ is defined in \eqref{e29}. Then it is clear  that
\begin{equation*}
G(k,n^{u}_{\eta})=\sum^{n^{u}_{\eta}}_{i=1} (g^k_{i})^2  =\|\mathcal{G}_{k}\|^2  \quad \text{ for every } k \geq 1.
\end{equation*}
In the following lemma we show that there exists an index $n^{l}_{\eta}$ such that the sequences $\{g^{k}_{i}\}_k$ with $1\leq i \leq n^{l}_{\eta}$ converge to zero $Q$-linearly as $k$ tends to infinity.
\begin{lemma}
\label{lem3}
For every $\eta \in (0, \rho_{\mathcal{A}}\delta_{\inf}]$, there exists a positive integer  $n^{l}_{\eta}$ with $ 1\leq n^{l}_{\eta} \leq n^{u}_{\eta} $ such that for every $ 1\leq  i  \leq n^{l}_{\eta}$ ,  the sequences $\{g^{k}_{i}\}_k$ converge to zero $Q$-linearly with the factor $\rho_{\mathcal{A}}$ as $k$ tends to infinity.
\end{lemma}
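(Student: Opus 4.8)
The guiding idea is that the hypothesis $\eta\le\rho_{\mathcal{A}}\delta_{\inf}$ is exactly calibrated so that the spectral intervals $I_i$ lying near the bottom of $\sigma(\mathcal{A})$ sit inside the set on which the one-step contraction factors $\bigl|(\alpha_p-\lambda)/\alpha_p\bigr|$ are \emph{uniformly} bounded by $\rho_{\mathcal{A}}<1$, irrespective of which admissible values in $[\delta_{\inf},\delta_{\sup}]$ the step-sizes $\alpha_p$ take. Once this is in place, the claim follows at once from the integral representation \eqref{e29}, so the whole proof reduces to a pointwise estimate plus a choice of $n^{l}_{\eta}$.

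First I would record that, by the standing assumption $\delta_{\inf}<\delta_{\sup}$,
\[
\eta\;\le\;\rho_{\mathcal{A}}\delta_{\inf}\;=\;\frac{\delta_{\inf}(\delta_{\sup}-\delta_{\inf})}{\delta_{\sup}}\;<\;\delta_{\sup}-\delta_{\inf},
\]
so $\tfrac{\delta_{\sup}-\delta_{\inf}}{\eta}>1$ and hence $n^{u}_{\eta}=\lfloor\tfrac{\delta_{\sup}-\delta_{\inf}}{\eta}\rfloor+1\ge2$; in particular $I_1=[\delta_{\inf},\delta_{\inf}+\eta)$ is a genuine low interval with $\sup I_1=\delta_{\inf}+\eta\le(1+\rho_{\mathcal{A}})\delta_{\inf}$. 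I then set
\[
n^{l}_{\eta}\;:=\;\max\bigl\{\,i:\ 1\le i\le n^{u}_{\eta},\ \sup I_i\le(1+\rho_{\mathcal{A}})\delta_{\inf}\,\bigr\}.
\]
This set is nonempty (it contains $i=1$) and bounded above by $n^{u}_{\eta}$, so $1\le n^{l}_{\eta}\le n^{u}_{\eta}$; and since $i\mapsto\sup I_i$ is nondecreasing, $\sup I_i\le(1+\rho_{\mathcal{A}})\delta_{\inf}$ holds for every $1\le i\le n^{l}_{\eta}$.

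The key pointwise estimate is then immediate: for $1\le i\le n^{l}_{\eta}$, any $\lambda\in I_i$ obeys $\delta_{\inf}\le\lambda\le(1+\rho_{\mathcal{A}})\delta_{\inf}$, while by \eqref{eqkk11} every step-size satisfies $\alpha_p\in[\delta_{\inf},\delta_{\sup}]$; hence
\[
1-\frac{\lambda}{\alpha_p}\;\le\;1-\frac{\delta_{\inf}}{\delta_{\sup}}\;=\;\rho_{\mathcal{A}},
\qquad
1-\frac{\lambda}{\alpha_p}\;\ge\;1-\frac{(1+\rho_{\mathcal{A}})\delta_{\inf}}{\delta_{\inf}}\;=\;-\rho_{\mathcal{A}},
\]
so that $\bigl|\tfrac{\alpha_p-\lambda}{\alpha_p}\bigr|\le\rho_{\mathcal{A}}<1$ for all $p\ge1$ and all $\lambda\in I_i$. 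Feeding this into \eqref{e29} and peeling off the $p=k$ factor of the product,
\begin{align*}
(g^{k+1}_i)^2 &= \int_{I_i}\Bigl(\tfrac{\alpha_k-\lambda}{\alpha_k}\Bigr)^{2}\Bigl[\textstyle\prod_{p=1}^{k-1}\bigl(\tfrac{\alpha_p-\lambda}{\alpha_p}\bigr)^{2}\Bigr]\,d(E_{\lambda}\mathcal{G}_1,\mathcal{G}_1)\\
&\le\;\rho_{\mathcal{A}}^{2}\int_{I_i}\Bigl[\textstyle\prod_{p=1}^{k-1}\bigl(\tfrac{\alpha_p-\lambda}{\alpha_p}\bigr)^{2}\Bigr]\,d(E_{\lambda}\mathcal{G}_1,\mathcal{G}_1)\;=\;\rho_{\mathcal{A}}^{2}\,(g^{k}_i)^2,
\end{align*}
where I used that $A\mapsto(E_{A}\mathcal{G}_1,\mathcal{G}_1)$ is a nonnegative measure on $I_i$ and the remaining integrand factor is nonnegative. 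Thus $g^{k+1}_i\le\rho_{\mathcal{A}}\,g^{k}_i$ for every $k\ge1$, so $g^{k}_i\le\rho_{\mathcal{A}}^{\,k-1}g^{1}_i\to0$, which is $Q$-linear convergence with factor $\rho_{\mathcal{A}}$ (the case $g^{1}_i=0$ being trivial).

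I do not anticipate a genuine obstacle; the one point needing care is the degenerate case $n^{u}_{\eta}=1$ (the whole spectrum contained in a single interval), in which no uniform $\rho_{\mathcal{A}}$-bound can hold, but this is precisely what the hypothesis $\eta\le\rho_{\mathcal{A}}\delta_{\inf}$ excludes. The genuinely delicate matter—controlling the ``high'' intervals $i>n^{l}_{\eta}$, where the factors $|(\alpha_p-\lambda)/\alpha_p|$ may exceed $1$—does not arise here and is left to the subsequent lemmas in the proof of Theorem \ref{RConvergence}.
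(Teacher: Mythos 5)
Your proof is correct and follows essentially the same route as the paper: you define $n^{l}_{\eta}$ as the largest index whose intervals stay below $(1+\rho_{\mathcal{A}})\delta_{\inf}$, establish the uniform pointwise bound $\bigl|\tfrac{\alpha_p-\lambda}{\alpha_p}\bigr|\le\rho_{\mathcal{A}}$ on those intervals, and conclude via the integral representation \eqref{e29}, exactly as in \eqref{e136}. The only cosmetic difference is that you obtain the pointwise bound by sandwiching $1-\lambda/\alpha_p$ between $-\rho_{\mathcal{A}}$ and $\rho_{\mathcal{A}}$ directly, whereas the paper splits into the two sign cases of $\alpha_p-\lambda$.
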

\begin{proof}
Choose $  n^{l}_{\eta} \in \{1,\cdots,n^{u}_{\eta}\}$ as the largest integer such that
\begin{equation*}
\bigcup^{n^{l}_{\eta}}_{i=1} I_i \subseteq [\delta_{\inf},  (1+\rho_{\mathcal{A}})\delta_{\inf}].
\end{equation*}
Observe that this is well-defined since  $\eta \leq \rho_{\mathcal{A}}\delta_{\inf}$. Moreover, for every  $\lambda \in I_i$  with $1\leq  i \leq n^{l}_{\eta}$ and every $p \geq 1$ we have the following two cases:
\begin{enumerate}
\item  If  $\alpha_p -\lambda \geq 0$, then we have 
\begin{equation*}
\abs{ \frac{\alpha_p-\lambda}{\alpha_p}} = \frac{\alpha_p-\lambda}{\alpha_p} \leq \rho_{\mathcal{A}} <1.
\end{equation*}  
\item  If  $\alpha_p -\lambda < 0$, then clearly both of  $\alpha_p$ and $\lambda$ belong to  $[\delta_{\inf},  (1+\rho_{\mathcal{A}})\delta_{\inf}]$ and  we can write
\begin{equation*}
\abs{ \frac{\alpha_p-\lambda}{\alpha_p}} = \frac{\lambda-\alpha_p}{\alpha_p} \leq  \frac{(1+\rho_{\mathcal{A}})\delta_{\inf}- \delta_{\inf}}{\delta_{\inf}} = \rho_{\mathcal{A}}.
\end{equation*}  
\end{enumerate}
Therefore, we obtain
\begin{equation}
\label{e136}
\begin{split}
(g^{k+1}_{i})^2   & = \int_{I_i} \left[ \prod^{k}_{p=1}\left( \frac{\alpha_p-\lambda}{\alpha_p} \right)^2 \right]\, d(E_{\lambda}\mathcal{G}_1, \mathcal{G}_1) \leq \int_{I_i}\left( \frac{\alpha_k-\lambda}{\alpha_k} \right)^2 \left[ \prod^{k-1}_{p=1}\left( \frac{\alpha_p-\lambda}{\alpha_p} \right)^2 \right]\, d(E_{\lambda}\mathcal{G}_1, \mathcal{G}_1)\\
& \leq \rho_{\mathcal{A}}^{2}\int_{I_i} \left[ \prod^{k-1}_{p=1}\left( \frac{\alpha_p-\lambda}{\alpha_p} \right)^2 \right]\, d(E_{\lambda}\mathcal{G}_1, \mathcal{G}_1)=\rho_{\mathcal{A}}^{2}(g^{k}_{i})^2.
\end{split}
\end{equation}
This concludes the proof.
\qed \end{proof}
Next we prove the following useful lemmas, which will be used later.
 \begin{lemma}
 \label{alem1}
For any interval length $\eta \in (0,\frac{\delta_{\inf}}{2})$, every integer $\ell$ with  $ n^{l}_{\eta} \leq \ell  \leq  n^{u}_{\eta}$,  and $k\geq 1$,  the following property holds:

If the following condition
\begin{equation}
\label{e3}
G(k+j,\ell) \leq \overline{\zeta}\|\mathcal{G}_k\|^2  \quad \text{ for all } j\geq \overline{r}
\end{equation}
holds for some positive  $\overline{r} \in \mathbb{N}$ and $\overline{\zeta} \in \mathbb{R}_+$, then there exists an integer $\hat{j} \in \{ \overline{r},\cdots ,\overline{r}+\Theta +1\}$ such that
\begin{equation*}
(g^{k+\hat{j}}_{\ell+1})^2 \leq 2\overline{\zeta}\| \mathcal{G}_k\|^2,
\end{equation*}
where $\Theta =  \Theta (\overline{\zeta},\overline{r}):=\ceil[\Big]{\frac{\log(2\overline{\zeta}{\gamma_{\mathcal{A}}}^{-2(\overline{r}+1)})}{2\log c}}$ with $c := \max \{ \rho_{\mathcal{A}}, \frac{1}{2}+\frac{\eta}{\delta_{\inf}} \} $.
\end{lemma}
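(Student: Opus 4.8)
The plan is to argue by contradiction, adapting the finite-dimensional scheme of \cite{MR1880051} to the spectral measure $E$. If $\ell=n^{u}_{\eta}$ the interval $I_{\ell+1}$ is empty and the claim is vacuous, so assume $\ell+1\le n^{u}_{\eta}$ and suppose that $(g^{k+j}_{\ell+1})^{2}>2\overline{\zeta}\,\norm{\mathcal{G}_{k}}^{2}$ for every $j\in\{\overline{r},\overline{r}+1,\dots,\overline{r}+\Theta+1\}$; the goal is to reach a contradiction.

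First I would pin the step-sizes down from below along these indices. Fix $m=k+j$ with $j$ in the above window. By the standing assumption \eqref{e3}, $G(m,\ell)\le\overline{\zeta}\,\norm{\mathcal{G}_{k}}^{2}$, so together with the contradiction hypothesis $G(m,\ell)\le\tfrac12\,(g^{m}_{\ell+1})^{2}$; since moreover $\norm{\mathcal{G}_{m}}^{2}=\sum_{i=1}^{n^{u}_{\eta}}(g^{m}_{i})^{2}\ge G(m,\ell)+(g^{m}_{\ell+1})^{2}\ge 3\,G(m,\ell)$, the low-frequency fraction $q:=G(m,\ell)/\norm{\mathcal{G}_{m}}^{2}$ satisfies $q\le\tfrac13$. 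Writing $\alpha^{BB1}_{m+1}=(\mathcal{G}_{m},\mathcal{A}\mathcal{G}_{m})/\norm{\mathcal{G}_{m}}^{2}=\bigl(\int_{\sigma(\mathcal{A})}\lambda\,d(E_{\lambda}\mathcal{G}_{m},\mathcal{G}_{m})\bigr)/\norm{\mathcal{G}_{m}}^{2}$ (cf. \eqref{e2}) and splitting the integral over $\bigcup_{i\le\ell}I_{i}$ and over $\bigcup_{i\ge\ell+1}I_{i}$, using $\lambda\ge\delta_{\inf}$ on the first set and $\lambda\ge a_{\ell+1}$ on the second, I obtain $\alpha^{BB1}_{m+1}\ge\delta_{\inf}q+a_{\ell+1}(1-q)\ge\tfrac{\delta_{\inf}+2a_{\ell+1}}{3}$. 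Since $\alpha^{BB2}_{m+1}\ge\alpha^{BB1}_{m+1}$ by the Cauchy--Schwarz inequality applied to $\mathcal{A}\mathcal{G}_{m}$ and $\mathcal{G}_{m}$ (with $\mathcal{A}$ self-adjoint), whichever rule is chosen gives $\alpha_{m+1}\ge\tfrac{\delta_{\inf}+2a_{\ell+1}}{3}$ for all $m=k+j$ with $j$ in the window.

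Next I would establish a per-step contraction of $g_{\ell+1}$. Whenever $\alpha_{m}\ge\tfrac{\delta_{\inf}+2a_{\ell+1}}{3}$ and $\lambda\in I_{\ell+1}\subseteq[a_{\ell+1},a_{\ell+1}+\eta]$: if $\lambda\le\alpha_{m}$ then $0\le 1-\lambda/\alpha_{m}\le 1-\delta_{\inf}/\delta_{\sup}=\rho_{\mathcal{A}}$; if $\lambda>\alpha_{m}$ then $0<\lambda/\alpha_{m}-1\le\frac{3(a_{\ell+1}+\eta)}{2a_{\ell+1}+\delta_{\inf}}-1=\frac{a_{\ell+1}-\delta_{\inf}+3\eta}{2a_{\ell+1}+\delta_{\inf}}<\tfrac12$, the last inequality being equivalent to $\eta<\delta_{\inf}/2$. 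In both cases $\abs{(\alpha_{m}-\lambda)/\alpha_{m}}\le c$, and since $(g^{m+1}_{\ell+1})^{2}=\int_{I_{\ell+1}}\bigl(\frac{\alpha_{m}-\lambda}{\alpha_{m}}\bigr)^{2}d(E_{\lambda}\mathcal{G}_{m},\mathcal{G}_{m})$ (from \eqref{e29}; cf. the computation in the proof of Lemma \ref{lem3}), it follows that $(g^{m+1}_{\ell+1})^{2}\le c^{2}(g^{m}_{\ell+1})^{2}$ for every such $m$. Finally I would combine this with the crude a priori bound $(g^{k+\overline{r}+1}_{\ell+1})^{2}\le\gamma_{\mathcal{A}}^{2(\overline{r}+1)}\norm{\mathcal{G}_{k}}^{2}$, which comes from iterating \eqref{e17} and using $\abs{(\alpha_{p}-\lambda)/\alpha_{p}}\le\gamma_{\mathcal{A}}$ on $\sigma(\mathcal{A})$ (by \eqref{eqkk11}): iterating the contraction $\Theta$ times from $m=k+\overline{r}+1$ gives $(g^{k+\overline{r}+\Theta+1}_{\ell+1})^{2}\le c^{2\Theta}\gamma_{\mathcal{A}}^{2(\overline{r}+1)}\norm{\mathcal{G}_{k}}^{2}$, and the very definition of $\Theta$ (together with $0<c<1$) is what makes $c^{2\Theta}\gamma_{\mathcal{A}}^{2(\overline{r}+1)}\le 2\overline{\zeta}$; hence $(g^{k+\overline{r}+\Theta+1}_{\ell+1})^{2}\le 2\overline{\zeta}\,\norm{\mathcal{G}_{k}}^{2}$, contradicting the assumption at $j=\overline{r}+\Theta+1$. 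Therefore the supposition fails and the asserted $\hat{j}\in\{\overline{r},\dots,\overline{r}+\Theta+1\}$ exists.

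I expect the delicate step to be the per-step contraction: the naive bound $\alpha_{m}\ge(\delta_{\inf}+a_{\ell+1})/2$ (which follows merely from $q\le\tfrac12$) is not enough to beat $c$ when $a_{\ell+1}$ is close to $\delta_{\sup}$, and one really needs the sharper $q\le\tfrac13$ coming from $\norm{\mathcal{G}_{m}}^{2}\ge G(m,\ell)+(g^{m}_{\ell+1})^{2}$; this is also the point at which the hypothesis $\eta<\delta_{\inf}/2$ is consumed. The remaining work is bookkeeping — checking that every step-size $\alpha_{m}$ invoked in the iteration carries an index $j=m-1-k$ lying in the window, and disposing of the degenerate cases ($\ell=n^{u}_{\eta}$, and $\Theta\le 0$, which can occur only when $\gamma_{\mathcal{A}}<1$ and is then settled directly by the a priori decay).
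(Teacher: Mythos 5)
Your argument is correct and follows essentially the same route as the paper's proof of Lemma \ref{alem1}: the hypothesis \eqref{e3} together with the assumed largeness of $(g^{k+j}_{\ell+1})^2$ forces the step-sizes down to roughly $\tfrac{2}{3}a_{\ell+1}$, which yields the per-step contraction by $c$ on $I_{\ell+1}$, and the a priori growth bound $\gamma_{\mathcal{A}}^{2(\overline{r}+1)}$ together with the definition of $\Theta$ closes the argument. The only genuine (and welcome) shortcuts are your use of Cauchy--Schwarz to reduce the BB2 case to $\alpha^{BB2}_{m+1}\ge\alpha^{BB1}_{m+1}$ -- the paper instead reruns the estimate separately for BB2 in \eqref{e25a} -- and your slightly sharper step-size bound $\tfrac{1}{3}(\delta_{\inf}+2a_{\ell+1})$, which makes the second case of the contraction estimate come out below $\tfrac12$ rather than the paper's $\tfrac12+\tfrac{\eta}{\delta_{\inf}}$; neither changes the structure of the proof.
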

\begin{proof}
Supposing that
\begin{equation}
\label{e111}
(g^{k+j}_{\ell +1})^2 > 2\overline{\zeta}\| \mathcal{G}_k\|^2  \quad \text{ for all  } j \in \{\overline{r}, \cdots , \overline{r} +\Theta\},
\end{equation}
we will show that
\begin{equation*}
(g^{k+\overline{r}+\Theta+1}_{\ell +1})^2 \leq  2\overline{\zeta}\| \mathcal{G}_k\|^2.
\end{equation*}
Due to \eqref{e29}, we have for every $k\geq 1$ that
\begin{equation}
\begin{split}
\label{e112}
(g^{k+\overline{r}+1}_{\ell +1})^2 &= \int_{I_{\ell +1}}\left[ \prod^{k+\overline{r}}_{p =1}\left( \frac{\alpha_{p}-\lambda}{\alpha_{p}} \right)^2 \right] \, d(E_{\lambda}\mathcal{G}_{1}, \mathcal{G}_{1})\\
& =  \int_{I_{\ell +1}}\left[ \prod^{k+\overline{r}}_{p =k}\left( \frac{\alpha_{p}-\lambda}{\alpha_{p}} \right)^2 \right]\left[ \prod^{k-1}_{p =1}\left( \frac{\alpha_{p}-\lambda}{\alpha_{p}} \right)^2 \right]  \, d(E_{\lambda}\mathcal{G}_{1}, \mathcal{G}_{1}) \\
& \leq \left(\frac{\delta_{\sup}-\delta_{\inf}}{\delta_{\inf}} \right)^{2(\overline{r}+1)}\int_{I_{\ell +1}}\left[ \prod^{k-1}_{p =1}\left( \frac{\alpha_{p}-\lambda}{\alpha_{p}} \right)^2 \right]  \, d(E_{\lambda}\mathcal{G}_{1}, \mathcal{G}_{1}) \\
&= \gamma_{\mathcal{A}}^{2(\overline{r}+1)}(g^{k}_{\ell +1})^2\leq \gamma_{\mathcal{A}}^{2(\overline{r}+1)}\| \mathcal{G}_k\|^2.
\end{split}
\end{equation}
Due to Algorithm \ref{BBa},  for every $j \in \{\overline{r}, \cdots , \overline{r} +\Theta\}$ we have one of the cases $\alpha_{k+j} = \alpha^{BB1}_{k+j}$  or $\alpha_{k+j} = \alpha^{BB2}_{k+j}$. Further, using \eqref{e17}, the fact that $\mathcal{A}$ is self-adjoint, and  the spectral property \eqref{spect}, we have for every $k\geq1$ and $q=0,1,2$, that
\begin{equation}
\label{e23b}
\begin{split}
&(\mathcal{G}_{k},\mathcal{A}^q\mathcal{G}_{k}) = \left(  \left[ \prod^{k-1}_{j=1}\left( \frac{\alpha_p-\mathcal{A}}{\alpha_p} \right) \right] \mathcal{G}_1,   \mathcal{A}^q\left[ \prod^{k-1}_{p=1}\left( \frac{\alpha_p-\mathcal{A}}{\alpha_p} \right)\right] \mathcal{G}_1 \right)\\
& = \left( \mathcal{A}^q \left[ \prod^{k-1}_{p=1}\left( \frac{\alpha_p-\mathcal{A}}{\alpha_p} \right)^2 \right] \mathcal{G}_1, \mathcal{G}_1 \right) = \int_{\sigma(\mathcal{A})}  \lambda^q\left[ \prod^{k-1}_{p=1}\left( \frac{\alpha_p-\lambda}{\alpha_p} \right)^2 \right]  \, d(E_{\lambda}\mathcal{G}_{1}, \mathcal{G}_{1})\\
& = \int_{\bigcup^{n^{u}_{\eta}}_{i = 1} I_i}\lambda^q\left[ \prod^{k-1}_{p=1}\left( \frac{\alpha_p-\lambda}{\alpha_p} \right)^2 \right]  \, d(E_{\lambda}\mathcal{G}_{1}, \mathcal{G}_{1}).
\end{split}
\end{equation}
Now, by using \eqref{e2}, \eqref{e36}, and  \eqref{e23b},  we can write for $j \in \{\overline{r}, \cdots , \overline{r} +\Theta\}$ that
\begin{equation}
\label{e23}
\alpha^{BB1}_{k+j+1} = \frac{(\mathcal{G}_{k+j},\mathcal{A}\mathcal{G}_{k+j})}{\mathcal{G}_{k+j},\mathcal{G}_{k+j})} =\frac{\int_{\bigcup^{n^{u}_{\eta}}_{i = 1} I_i}\lambda\left[ \prod^{k+j-1}_{p=1}\left( \frac{\alpha_p-\lambda}{\alpha_p} \right)^2 \right]  \, d(E_{\lambda}\mathcal{G}_{1}, \mathcal{G}_{1}) }{\int_{\bigcup^{n^{u}_{\eta}}_{i = 1} I_i}\left[ \prod^{k+j-1}_{p=1}\left( \frac{\alpha_p-\lambda}{\alpha_p} \right)^2 \right]  \, d(E_{\lambda}\mathcal{G}_{1}, \mathcal{G}_{1})},
\end{equation}
and
\begin{equation}
\label{e23a}
\alpha^{BB2}_{k+j+1}= \frac{(\mathcal{G}_{k+j},\mathcal{A}^2\mathcal{G}_{k+j})}{\mathcal{G}_{k+j},\mathcal{A}\mathcal{G}_{k+j})} = \frac{\int_{\bigcup^{n^{u}_{\eta}}_{i = 1} I_i}\lambda^2\left[ \prod^{k+j-1}_{p=1}\left( \frac{\alpha_p-\lambda}{\alpha_p} \right)^2 \right]  \, d(E_{\lambda}\mathcal{G}_{1}, \mathcal{G}_{1}) }{\int_{\bigcup^{n^{u}_{\eta}}_{i = 1} I_i}\lambda\left[ \prod^{k+j-1}_{p=1}\left( \frac{\alpha_p-\lambda}{\alpha_p} \right)^2 \right]  \, d(E_{\lambda}\mathcal{G}_{1}, \mathcal{G}_{1})}.
\end{equation}
Moreover, due to \eqref{e29} and \eqref{e3}, we have
\begin{equation}
\label{e24}
\begin{split}
 &\int_{\bigcup^{\ell}_{i = 1} I_i}\left[ \prod^{k+j-1}_{p=1}\left( \frac{\alpha_p-\lambda}{\alpha_p} \right)^2 \right]  \, d(E_{\lambda}\mathcal{G}_{1}, \mathcal{G}_{1}) = \sum^{\ell}_{i = 1}\int_{I_i}\left[ \prod^{k+j-1}_{p=1}\left( \frac{\alpha_p-\lambda}{\alpha_p} \right)^2 \right]  \, d(E_{\lambda}\mathcal{G}_{1}, \mathcal{G}_{1}) \\
 & = \sum^{\ell}_{i=1} (g^{k+j}_i)^2 = G(k+j,\ell)\leq  \overline{\zeta}\|\mathcal{G}_k\|^2  \quad \text{ for all   }   j \in \{ \overline{r},\cdots ,\overline{r}+\Theta\}.
\end{split}
\end{equation}
For every $\lambda \in \bigcup^{\ell}_{i = 1} I_i $, we have $\lambda \leq a_{\ell+1}$. Thus, by \eqref{e24}, we can write
\begin{equation}
\label{e24a}
\begin{split}
&\int_{\bigcup^{\ell}_{i = 1} I_i}\lambda\left[ \prod^{k+j-1}_{p=1}\left( \frac{\alpha_p-\lambda}{\alpha_p} \right)^2 \right]  \, d(E_{\lambda}\mathcal{G}_{1}, \mathcal{G}_{1})\leq  a_{\ell+1} \int_{\bigcup^{\ell}_{i = 1} I_i}\left[ \prod^{k+j-1}_{p=1}\left( \frac{\alpha_p-\lambda}{\alpha_p} \right)^2 \right]  \, d(E_{\lambda}\mathcal{G}_{1}, \mathcal{G}_{1})  \\
 &= a_{\ell+1}  G(k+j,\ell)\leq  a_{\ell+1} \overline{\zeta}\|\mathcal{G}_k\|^2  \quad \text{ for all   }   j \in \{ \overline{r},\cdots ,\overline{r}+\Theta\}.
\end{split}
\end{equation}
From \eqref{e23} and \eqref{e24}, we obtain
\begin{equation}
\label{e25}
\frac{ a_{\ell+1}\mathcal{Z} }{ \overline{\zeta}\|\mathcal{G}_k\|^2  + \mathcal{Z}} \leq \alpha^{BB1}_{k+j+1}  \leq \delta_{\sup}  \quad \text{ for all }  j \in \{ \overline{r},\cdots ,\overline{r}+\Theta\},
\end{equation}
where $\mathcal{Z}:=\int_{\bigcup^{n^{u}_{\eta}}_{i = \ell+1} I_i}\left[ \prod^{k+j-1}_{p=1}\left( \frac{\alpha_p-\lambda}{\alpha_p} \right)^2 \right]  \, d(E_{\lambda}\mathcal{G}_{1}, \mathcal{G}_{1})$. From \eqref{e23a}, \eqref{e24a}, and the fact that $\lambda \geq  a_{\ell +1}$  for every $\lambda \in \bigcup^{n^{u}_{\eta}}_{i = \ell+1} I_i $, it follows that
\begin{equation}
\label{e25a}
\begin{split}
&\frac{ a_{\ell+1}\mathcal{Z}}{\overline{\zeta}\|\mathcal{G}_k\|^2  +\mathcal{Z}}= \frac{ a^2_{\ell+1}\mathcal{Z}}{a_{\ell +1}\overline{\zeta}\|\mathcal{G}_k\|^2  +\alpha_{\ell +1}\mathcal{Z}} \leq \frac{ a_{\ell+1}\int_{\bigcup^{n^{u}_{\eta}}_{i = \ell+1} I_i}\lambda\left[ \prod^{k+j-1}_{p=1}\left( \frac{\alpha_p-\lambda}{\alpha_p} \right)^2 \right]  \, d(E_{\lambda}\mathcal{G}_{1}, \mathcal{G}_{1})}{a_{\ell +1}\overline{\zeta}\|\mathcal{G}_k\|^2  + \int_{\bigcup^{n^{u}_{\eta}}_{i = \ell+1} I_i}\lambda\left[ \prod^{k+j-1}_{p=1}\left( \frac{\alpha_p-\lambda}{\alpha_p} \right)^2 \right]  \, d(E_{\lambda}\mathcal{G}_{1}, \mathcal{G}_{1}) }\\\leq &  \alpha^{BB2}_{k+j+1}  \leq \delta_{\sup}  \quad \text{ for all }  j \in \{ \overline{r},\cdots ,\overline{r}+\Theta\}.
\end{split}
\end{equation}
Now, using the fact  that
\begin{equation*}
\mathcal{Z} \geq  \int_{ I_{\ell +1}}\left[ \prod^{k+j-1}_{p=1}\left( \frac{\alpha_p-\lambda}{\alpha_p} \right)^2 \right]  \, d(E_{\lambda}\mathcal{G}_{1}, \mathcal{G}_{1})=  (g^{k+j}_{\ell +1})^2,
\end{equation*}
 and  by \eqref{e111}, \eqref{e25}, and \eqref{e25a},  we infer that for a chosen $\alpha_{k+j+1} = \alpha^{BB1}_{k+j+1}$ or  $\alpha_{k+j+1} = \alpha^{BB2}_{k+j+1}$ that
 \begin{equation}
 \label{e27}
\frac{2}{3} a_{\ell+1} =\frac{ a_{\ell+1}\mathcal{Z}}{\frac{1}{2}\mathcal{Z}  +\mathcal{Z}}  \leq \frac{ a_{\ell+1}\mathcal{Z}}{\frac{1}{2}(g^{k+j}_{\ell+1})^2  +\mathcal{Z}}  \leq\frac{ a_{\ell+1}\mathcal{Z}}{\overline{\zeta}\|\mathcal{G}_k\|^2  +\mathcal{Z}} \leq \alpha_{k+j+1}\leq\delta_{\sup}  \quad \text{ for all } j \in \{ \overline{r},\cdots ,\overline{r}+\Theta\}.
 \end{equation}
% Hence, due to the fact that  $\alpha_{k+j+1}=\max\{\kappa_{k+j+1},\mu_{\min}\}$, we obtain
%\begin{equation}
%\label{e27}
%\frac{2}{3} a_{\ell+1}  \leq \kappa_{k+j+1} \leq  \alpha_{k+j+1} \leq \delta_{\sup}    \quad \text{ for all } j \in \{ \overline{r},\cdots ,\overline{r}+\Theta\},
%\end{equation}
Now for $\lambda \in [a_{\ell+1}, b_{\ell +1}]$ and for $j \in \{ \overline{r},\cdots ,\overline{r}+\Theta\}$ we have the following two cases:
\begin{enumerate}
\item If $\alpha_{k+j+1}- \lambda \geq 0$, then by \eqref{eqkk11} we have
\begin{equation*}
  \abs{1- \frac{\lambda}{\alpha_{k+j+1}}} = \left( 1- \frac{\lambda}{\alpha_{k+j+1}} \right) \leq \rho_{\mathcal{A}}  <  1.
\end{equation*}
\item If $\alpha_{k+j+1}- \lambda <0$, then by \eqref{e27} and  using the fact that $ \lambda \leq  b_{\ell+1} \leq a_{\ell+1}+\eta$ for $\lambda \in I_{\ell+1}$, we obtain
\begin{equation*}
\begin{split}
\abs{1- \frac{\lambda}{\alpha_{k+j+1}}} &=\left(\frac{\lambda}{\alpha_{k+j+1}}-1 \right) \leq \left( \frac{b_{\ell +1}}{\alpha_{k+j+1}}-1\right) \leq \left( \frac{a_{\ell +1}+\eta}{\alpha_{k+j+1}}-1\right)\\
& \leq \frac{3}{2}+\frac{\eta}{\alpha_{k+j+1}}-1 \leq  \frac{1}{2}+\frac{\eta}{\delta_{\inf}} < 1,
\end{split}
\end{equation*}
where in the last inequality we have used that $\eta < \frac{\delta_{\inf}}{2}$.
\end{enumerate}
Hence, by the fact that  $c = \max \{ \rho_{\mathcal{A}}, \frac{1}{2}+\frac{\eta}{\delta_{\inf}} \} $,  we have for every $j \in \{ \overline{r},\cdots ,\overline{r}+\Theta\}$  and $\lambda \in [a_{\ell+1}, b_{\ell +1}]$ that
\begin{equation}
\label{e28}
\abs{ 1- \frac{\lambda}{\alpha_{k+j+1}}} \leq  c <1.
\end{equation}
Finally, by using \eqref{e29} and \eqref{e28}  we obtain for  every $j \in \{ \overline{r},\cdots ,\overline{r}+\Theta\}$ that
\begin{equation}
\begin{split}
\label{e29aaa}
(g^{k+j+2}_{\ell +1})^2 &= \int_{I_{\ell +1}}\left[ \prod^{k+j+1}_{p =1}\left( \frac{\alpha_{p}-\lambda}{\alpha_{p}} \right)^2 \right] \, d(E_{\lambda}\mathcal{G}_{1}, \mathcal{G}_{1})\\
& =  \int_{I_{\ell +1}}\abs{ 1-\frac{\lambda}{\alpha_{k+j+1}}}^2 \left[ \prod^{k+j}_{p =1}\left( \frac{\alpha_{p}-\lambda}{\alpha_{p}} \right)^2 \right]  \, d(E_{\lambda}\mathcal{G}_{1}, \mathcal{G}_{1}) \\
& \leq  c^2\int_{I_{\ell +1}}\left[ \prod^{k+j}_{p =1}\left( \frac{\alpha_{p}-\lambda}{\alpha_{p}} \right)^2 \right]  \, d(E_{\lambda}\mathcal{G}_{1}, \mathcal{G}_{1}) =  c^2(g^{k+j+1}_{\ell +1})^2.
\end{split}
\end{equation}
Using  \eqref{e112}, \eqref{e29aaa}, and the definitions of $\Theta$, we obtain
\begin{equation*}
\begin{split}
(g^{k+\overline{r}+\Theta+1 }_{\ell +1})^2 \leq c^{2\Theta}(g_{\ell+1}^{k+\overline{r}+1})^2\leq  c^{2\Theta} \gamma_{\mathcal{A}}^{2(\overline{r}+1)}(g^{k}_{\ell +1})^2 \leq  c^{2\Theta}\gamma_{\mathcal{A}}^{2(\overline{r}+1)}\| \mathcal{G}_k\|^2\leq 2\overline{\zeta}\|\mathcal{G}_{k}\|^2,
\end{split}
\end{equation*}
and the proof is complete.
\qed \end{proof}
\begin{lemma}
\label{lem4}
Let $\delta_{\sup} \geq 2\delta_{\inf}$.  Moreover,  assume that for any $\eta \in (0,\frac{\delta_{\inf}}{2})$,  integer $\ell$ with  $ n^{l}_{\eta} \leq \ell  \leq  n^{u}_{\eta}$,  and $k\geq 1$, there exist $r_{\ell} \in \mathbb{N}$  and $\zeta_{\ell} \in \mathbb{R}_+$ such that the condition
\begin{equation}
\label{e3e}
G(k+j,\ell) \leq \zeta_{\ell}\|\mathcal{G}_k\|^2  \quad \text{ for all } j\geq r_{\ell}
\end{equation}
 holds. Then we show that for the choice of
\begin{equation*}
\zeta_{\ell+1}:=(1+2\gamma_{\mathcal{A}}^4)\zeta_{\ell},  \quad  \text{ and }  \quad  r_{\ell+1}:= r_{\ell}+\Theta_{\ell}+1,
\end{equation*}
with $\Theta_{\ell}:=\Theta( \zeta_{\ell}, r_{\ell})$  defined as in Lemma \ref{alem1}, we have
\begin{equation*}
G(k+j,\ell +1) \leq \zeta_{\ell+1} \| \mathcal{G}_k\|^2   \quad \text{ for all } j\geq r_{\ell +1}.
\end{equation*}
\end{lemma}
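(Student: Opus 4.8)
The plan is to split $G(k+j,\ell+1)=G(k+j,\ell)+(g^{k+j}_{\ell+1})^2$ and to control the single extra term $(g^{k+j}_{\ell+1})^2$ by combining Lemma~\ref{alem1} (which produces one index where this term is small) with an elementary two-sided recursion that propagates smallness forward. First, by the standing hypothesis \eqref{e3e} with the pair $(r_\ell,\zeta_\ell)$, for every $j\ge r_\ell$ one has $G(k+j,\ell+1)=\sum_{i=1}^{\ell}(g^{k+j}_i)^2+(g^{k+j}_{\ell+1})^2\le\zeta_\ell\|\mathcal{G}_k\|^2+(g^{k+j}_{\ell+1})^2$. Hence it suffices to show that $(g^{k+j}_{\ell+1})^2\le 2\gamma_{\mathcal{A}}^4\zeta_\ell\|\mathcal{G}_k\|^2$ for all $j\ge r_{\ell+1}=r_\ell+\Theta_\ell+1$, because then $G(k+j,\ell+1)\le(1+2\gamma_{\mathcal{A}}^4)\zeta_\ell\|\mathcal{G}_k\|^2=\zeta_{\ell+1}\|\mathcal{G}_k\|^2$ for all such $j$, which is exactly the assertion.

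Next I would record two recursions for the tail quantities from \eqref{e29}. Since $|(\alpha_p-\lambda)/\alpha_p|\le\gamma_{\mathcal{A}}$ for every $p\ge1$ and every $\lambda\in[\delta_{\inf},\delta_{\sup}]$, one always has $(g^{m+1}_{\ell+1})^2\le\gamma_{\mathcal{A}}^2(g^m_{\ell+1})^2$. On the other hand, rereading the chain \eqref{e25}--\eqref{e28} in the proof of Lemma~\ref{alem1} at a single index shows: if $j\ge r_\ell$ and $(g^{k+j}_{\ell+1})^2>2\zeta_\ell\|\mathcal{G}_k\|^2$, then --- using $G(k+j,\ell)\le\zeta_\ell\|\mathcal{G}_k\|^2$ and $\eta<\tfrac{\delta_{\inf}}{2}$ --- one obtains $\tfrac23 a_{\ell+1}\le\alpha_{k+j+1}\le\delta_{\sup}$, hence $|1-\lambda/\alpha_{k+j+1}|\le c<1$ on $I_{\ell+1}$, and therefore, as in \eqref{e29aaa}, $(g^{k+j+2}_{\ell+1})^2\le c^2(g^{k+j+1}_{\ell+1})^2$. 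Applying Lemma~\ref{alem1} with $\overline{r}=r_\ell$ and $\overline{\zeta}=\zeta_\ell$ then yields an index $\hat{j}\in\{r_\ell,\dots,r_\ell+\Theta_\ell+1\}$ with $(g^{k+\hat{j}}_{\ell+1})^2\le 2\zeta_\ell\|\mathcal{G}_k\|^2$.

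The main step is to prove by induction on $j\ge\hat{j}$ that $(g^{k+j}_{\ell+1})^2\le 2\gamma_{\mathcal{A}}^4\zeta_\ell\|\mathcal{G}_k\|^2$; here $\gamma_{\mathcal{A}}\ge1$ because $\delta_{\sup}\ge2\delta_{\inf}$, so in particular $2\zeta_\ell\le 2\gamma_{\mathcal{A}}^4\zeta_\ell$ and $\gamma_{\mathcal{A}}^2\le\gamma_{\mathcal{A}}^4$. The cases $j=\hat{j}$ and $j=\hat{j}+1$ follow from $(g^{k+\hat{j}}_{\ell+1})^2\le 2\zeta_\ell\|\mathcal{G}_k\|^2$ together with one application of the first recursion. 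For $j\ge\hat{j}+2$, so that $j-2\ge r_\ell$, distinguish two cases: if $(g^{k+j-2}_{\ell+1})^2\le 2\zeta_\ell\|\mathcal{G}_k\|^2$, then two applications of the first recursion give $(g^{k+j}_{\ell+1})^2\le\gamma_{\mathcal{A}}^4\cdot 2\zeta_\ell\|\mathcal{G}_k\|^2$; if $(g^{k+j-2}_{\ell+1})^2>2\zeta_\ell\|\mathcal{G}_k\|^2$, then the one-step estimate of the previous paragraph, applied at index $j-2$, gives $(g^{k+j}_{\ell+1})^2\le c^2(g^{k+j-1}_{\ell+1})^2\le c^2\cdot 2\gamma_{\mathcal{A}}^4\zeta_\ell\|\mathcal{G}_k\|^2\le 2\gamma_{\mathcal{A}}^4\zeta_\ell\|\mathcal{G}_k\|^2$, where the inductive hypothesis at $j-1$ and $c<1$ were used. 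Since $\hat{j}\le r_\ell+\Theta_\ell+1=r_{\ell+1}$, this bound holds in particular for all $j\ge r_{\ell+1}$, and combining it with the first paragraph completes the proof.

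I expect the main obstacle to be the bookkeeping involved in extracting the ``pointwise in $j$'' consequence of Lemma~\ref{alem1}: the estimates \eqref{e25}--\eqref{e28} are written there over a whole block of indices, and one has to verify that the two-sided step-size bound $\tfrac23 a_{\ell+1}\le\alpha_{k+j+1}\le\delta_{\sup}$ and the contraction factor $c<1$ on $I_{\ell+1}$ already follow at a single index $j$ from \eqref{e3e} at that index together with $(g^{k+j}_{\ell+1})^2>2\zeta_\ell\|\mathcal{G}_k\|^2$. Beyond that, one only needs to keep the induction confined to indices $\ge r_\ell$ (so that \eqref{e3e} is available) and to use repeatedly that $\gamma_{\mathcal{A}}\ge1$ and $c<1$.
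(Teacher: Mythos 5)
Your proposal is correct and follows essentially the same route as the paper: the same split $G(k+j,\ell+1)=G(k+j,\ell)+(g^{k+j}_{\ell+1})^2$, the same reduction to the bound $(g^{k+j}_{\ell+1})^2\le 2\gamma_{\mathcal{A}}^4\zeta_{\ell}\|\mathcal{G}_k\|^2$, the same invocation of Lemma~\ref{alem1}, and the same pointwise-in-$j$ reading of \eqref{e25}--\eqref{e28} to get the two-sided step-size bound and the contraction factor $c$ on $I_{\ell+1}$. Your two-case strong induction is simply a tidier packaging of the paper's explicit tracking of maximal ``bad blocks'' $j_1\le j_2<j_3$ with restarts, and the index bookkeeping you flag as the main obstacle checks out exactly as you describe.
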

\begin{proof}
First, observe that
\begin{equation*}
G(k+j,\ell+1) = G(k+j,\ell)+(g^{k+j}_{\ell+1})^2.
\end{equation*}
Therefore, using \eqref{e3e} we only need to show that for every $ j \geq r_{\ell+1}$
\begin{equation}
\label{e32c}
(g^{k+j}_{\ell+1})^2 \leq 2 \gamma_{\mathcal{A}}^4\zeta_{\ell}\|\mathcal{G}_k\|^2.
\end{equation}
Due to Lemma \ref{alem1} for  $\overline{\zeta} = \zeta_{\ell}$ and $\overline{r}=r_{\ell}$, there exists an integer $j_1 \in \{r_{\ell},\cdots,r_{\ell}+\Theta_{\ell}+1\}$ such that
\begin{equation*}
(g^{k+j_1}_{\ell+1})^2 \leq 2\zeta_{\ell}\|\mathcal{G}_k\|^2.
\end{equation*}
Now let us introduce a shifting variable which we initialize by $j_s=j_1$. Assume that $j_2\geq j_s= j_1$ is an  index, for which we have
\begin{equation}
\label{e32a}
(g^{k+j}_{\ell+1})^2 \leq  2\zeta_{\ell}\|\mathcal{G}_k\|^2 \quad \text{ for  all }  j_1 \leq   j \leq j_2,
\end{equation}
and
\begin{equation}
\label{e32aa}
(g^{k+j_2+1}_{\ell+1})^2 > 2\zeta_{\ell}\|\mathcal{G}_k\|^2.
\end{equation}
Note that if this case does not arise,  clearly, \eqref{e32c} holds for all $j\geq j_s = j_1$ and since $\gamma_{\mathcal{A}} \geq 1$ the proof is finished.   Further,  we can write
\begin{equation}
\label{e113}
(g^{k+j+1}_{\ell+1})^2 >  2\zeta_{\ell}\|\mathcal{G}_k\|^2 \quad \text{ for  all }  j_2 \leq   j \leq j_3-2,
\end{equation}
where $j_3\geq j_2+2$ is the first integer greater than $j_2$ for which we have
\begin{equation}
\label{e131}
 (g^{k+j_3}_{\ell +1})^2 \leq 2\zeta_{\ell}\|\mathcal{G}_k\|^2.
\end{equation}
Existence of such an index is justified using  Lemma \ref{alem1} for $\overline{r}=j_2$ and $\overline{\zeta} = \zeta_{\ell}$.  Now, by \eqref{e113} and using the same argument as in the proof of Lemma \ref{alem1}, where we have shown that from \eqref{e111} implies \eqref{e27}, we can infer that
\begin{equation*}
\frac{2}{3} a_{\ell+1} \leq \alpha_{k+j+2} \leq \delta_{\sup}   \quad  \text{ for every }   j_2 \leq j  \leq  j_3-2.
\end{equation*}
Continuing the argument from the proof of Lemma \ref{alem1} we infer that
\begin{equation}
\label{e31}
(g^{k+j+3}_{\ell +1})^2 \leq  c^2  (g^{k+j+2}_{\ell +1})^2  \quad  \text{ for every }   j_2 \leq j  \leq  j_3-2,
\end{equation}
where $c := \max \{ \rho_{\mathcal{A}} ,  \frac{1}{2}+\frac{\eta}{\delta_{\inf}} \} <1$. Finally, using \eqref{e29} and  \eqref{e32a},  we have for $r =1,2$
\begin{equation}
\begin{split}
\label{e32}
(g^{k+j_2+r}_{\ell +1})^2 &= \int_{I_{\ell +1}}\left[ \prod^{k+j_2+r-1}_{p =1}\left( \frac{\alpha_{p}-\lambda}{\alpha_{p}} \right)^2 \right] \, d(E_{\lambda}\mathcal{G}_{1}, \mathcal{G}_{1})\\
& =  \int_{I_{\ell +1}}\left[ \prod^{k+j_2+r-1}_{p =k+j_2}\left( \frac{\alpha_{p}-\lambda}{\alpha_{p}} \right)^2 \right]\left[ \prod^{k+j_2-1}_{p =1}\left( \frac{\alpha_{p}-\lambda}{\alpha_{p}} \right)^2 \right]  \, d(E_{\lambda}\mathcal{G}_{1}, \mathcal{G}_{1}) \\
& =  \int_{I_{\ell +1}}\left[ \prod^{r}_{p =1}\left( \frac{\alpha_{k+j_2+p-1}-\lambda}{\alpha_{k+j_2+p-1}} \right)^2 \right]\left[ \prod^{k+j_2-1}_{p =1}\left( \frac{\alpha_{p}-\lambda}{\alpha_{p}} \right)^2 \right]  \, d(E_{\lambda}\mathcal{G}_{1}, \mathcal{G}_{1}) \\
& \leq \left(\frac{\delta_{\sup}-\delta_{\inf}}{\delta_{\inf}} \right)^{2r}\int_{I_{\ell +1}}\left[ \prod^{k+j_2-1}_{p =1}\left( \frac{\alpha_{p}-\lambda}{\alpha_{p}} \right)^2 \right]  \, d(E_{\lambda}\mathcal{G}_{1}, \mathcal{G}_{1})  =  \gamma^{2r}_{\mathcal{A}} (g^{k+j_2}_{\ell +1})^2.
\end{split}
\end{equation}
Now since $c<1$ and $\gamma_{\mathcal{A}}\geq 1$ due the fact that $\delta_{\sup} \geq 2\delta_{\inf}$ , we obtain from \eqref{e32a}, \eqref{e31}, and  \eqref{e32} that
\begin{equation}
\label{e32b}
 (g^{k+j+3}_{\ell +1})^2 \leq  \gamma_{\mathcal{A}}^4(g^{k+j_2}_{\ell +1})^2 \leq  2\zeta_{\ell} \gamma_{\mathcal{A}}^4 \|\mathcal{G}_k\|^2 \quad  \text{ for every } j_2-2 \leq j  \leq  j_3-2,
\end{equation}
and as a consequence, we obtain
\begin{equation}
\label{e32d}
 (g^{k+j}_{\ell +1})^2 \leq  2\zeta_{\ell} \gamma_{\mathcal{A}}^4 \|\mathcal{G}_k\|^2 \quad  \text{ for every } j_2+1 \leq j  \leq  j_3+1.
\end{equation}
From \eqref{e32d} and \eqref{e32a}  we conclude that \eqref{e32c} holds for every $j \in \{j_1,\cdots, j_3 \}$. Finally, by setting $j_s=j_3$  and restart the process for $j_3$ justified in \eqref{e131} and repeating the same argument, it can be shown that \eqref{e32c} holds for every $j\geq j_1$.  Recall that $j_1 \in \{r_{\ell},\cdots,r_{\ell}+\Theta_{\ell}+1\}$. Therefore \eqref{e32c} holds for every $j\geq r_{\ell+1}$ and the proof is finished.
\qed \end{proof}
In the next lemma, we investigate both of the cases $\delta_{\sup} < 2\delta_{\inf}$ and $\delta_{\sup} \geq 2\delta_{\inf}$.
\begin{lemma}
\label{R-1}
Let $\{  u_k \}_k$ be the sequence generated by Algorithm \ref{BBa} for \eqref{QP}. Then there exists a positive integer $m$ depending on $\delta_{\inf}$ and $\delta_{\sup}$ such that we have
\begin{equation}
\label{e55}
\|\mathcal{G}_{k+m}\| \leq \frac{1}{2}\|\mathcal{G}_{k}\|  \quad \text{for all } k\geq 1,
\end{equation}
or equivalently,
\begin{equation}
\label{e56}
\|u_{k+m}-u^*\| \leq \frac{1}{2}\|u_{k}-u^*\|  \quad \text{for all } k\geq 1,
\end{equation}
for all initial iterates $u_0, u_1 \in  \mathcal{H}$ with $u_0\neq u_1$  in the condition C1, or every initial iterate $u_1 \in \mathcal{H}$ and every initial step-size $\alpha_1>0$ in the condition C2.
\end{lemma}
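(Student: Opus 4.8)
The plan is to treat separately the two regimes that already organize the preceding results, $\delta_{\sup} < 2\delta_{\inf}$ and $\delta_{\sup} \ge 2\delta_{\inf}$, and in each case to exhibit a single integer $m$, determined only by $\delta_{\inf}$ and $\delta_{\sup}$, for which $\|\mathcal{G}_{k+m}\| \le \tfrac12\|\mathcal{G}_k\|$ holds simultaneously for every $k \ge 1$. The reformulation \eqref{e56} is then recovered from \eqref{e55} via $\mathcal{G}_k = \mathcal{A}(u_k-u^*)$ (which holds since $\mathcal{A}u^* = b$) together with the uniform two-sided bound $\delta_{\inf}\|v\| \le \|\mathcal{A}v\| \le \delta_{\sup}\|v\|$.

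In the regime $\delta_{\sup} < 2\delta_{\inf}$ we have $\gamma_{\mathcal{A}} < 1$, and estimate \eqref{e110} from the proof of Theorem \ref{Theo4} already gives $\|\mathcal{G}_{k+1}\| \le \gamma_{\mathcal{A}}\|\mathcal{G}_k\|$ for all $k \ge 1$; iterating and choosing $m := \lceil \log 2 / \log(1/\gamma_{\mathcal{A}}) \rceil$ yields $\|\mathcal{G}_{k+m}\| \le \gamma_{\mathcal{A}}^m\|\mathcal{G}_k\| \le \tfrac12\|\mathcal{G}_k\|$.

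The substantial case is $\delta_{\sup} \ge 2\delta_{\inf}$, in which $\gamma_{\mathcal{A}} \ge 1$ and $\tfrac12 \le \rho_{\mathcal{A}} < 1$. Fix once and for all an interval length $\eta \in (0, \tfrac{\delta_{\inf}}{2})$; since $\rho_{\mathcal{A}} \ge \tfrac12$ this choice also satisfies $\eta \le \rho_{\mathcal{A}}\delta_{\inf}$, so Lemmas \ref{lem3}, \ref{alem1} and \ref{lem4} are all available for it. By Lemma \ref{lem3}, $(g^{k+1}_i)^2 \le \rho_{\mathcal{A}}^2(g^k_i)^2$ for every $k \ge 1$ and $1 \le i \le n^l_\eta$; summing over $i$ and iterating gives $G(k+j, n^l_\eta) \le \rho_{\mathcal{A}}^{2j}\|\mathcal{G}_k\|^2$ for all $k\ge 1$ and $j\ge 0$. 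Because $\rho_{\mathcal{A}} < 1$ while $(1+2\gamma_{\mathcal{A}}^4)^{\,n^u_\eta - n^l_\eta}$ is a fixed finite number, we may pick $\bar r \in \mathbb{N}$ so large that $(1+2\gamma_{\mathcal{A}}^4)^{\,n^u_\eta - n^l_\eta}\,\rho_{\mathcal{A}}^{2\bar r} \le \tfrac14$. Then hypothesis \eqref{e3e} of Lemma \ref{lem4} holds for $\ell = n^l_\eta$ with $\zeta_{n^l_\eta} := \rho_{\mathcal{A}}^{2\bar r}$ and $r_{n^l_\eta} := \bar r$, and applying Lemma \ref{lem4} successively for $\ell = n^l_\eta, n^l_\eta+1, \dots, n^u_\eta - 1$ produces, after finitely many steps, $G(k+j, n^u_\eta) \le \zeta_{n^u_\eta}\|\mathcal{G}_k\|^2$ for all $j \ge r_{n^u_\eta}$ and all $k\ge1$, where $\zeta_{n^u_\eta} = (1+2\gamma_{\mathcal{A}}^4)^{\,n^u_\eta - n^l_\eta}\rho_{\mathcal{A}}^{2\bar r} \le \tfrac14$ and $r_{n^u_\eta} = \bar r + \sum_{\ell=n^l_\eta}^{n^u_\eta-1}(\Theta_\ell+1)$ is a finite integer depending only on $\delta_{\inf}$ and $\delta_{\sup}$. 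Since $G(k+j,n^u_\eta) = \|\mathcal{G}_{k+j}\|^2$, taking $m := r_{n^u_\eta}$ gives $\|\mathcal{G}_{k+m}\|^2 \le \tfrac14\|\mathcal{G}_k\|^2$, hence \eqref{e55}, for every $k\ge1$.

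Finally, \eqref{e56} follows from \eqref{e55}: with $e_k := u_k - u^*$ one has $\mathcal{G}_k = \mathcal{A}e_k$, so $\delta_{\inf}\|e_k\| \le \|\mathcal{G}_k\| \le \delta_{\sup}\|e_k\|$, and chaining \eqref{e55} over $\lceil \log_2(2\delta_{\sup}/\delta_{\inf})\rceil$ consecutive windows of length $m$ gives a (possibly larger) integer, still depending only on $\delta_{\inf}$ and $\delta_{\sup}$, for which the contraction holds in the $\mathcal{H}$-norm as well. The one genuinely non-routine point is the passage from Lemma \ref{lem4} — which in isolation only bounds $\|\mathcal{G}_{k+j}\|^2$ by $\zeta_{n^u_\eta}\|\mathcal{G}_k\|^2$ with $\zeta_{n^u_\eta}$ a priori larger than one — to an actual contraction; this is unlocked precisely by the freedom to feed the induction an arbitrarily small starting constant $\zeta_{n^l_\eta} = \rho_{\mathcal{A}}^{2\bar r}$, obtained from the $Q$-linear decay of the low part of the spectrum in Lemma \ref{lem3}, at the cost of a correspondingly larger window $m$. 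The remaining work — finiteness of $r_{n^u_\eta}$ and uniformity of all the bounds in $k$ — is routine bookkeeping inherited from the lemmas.
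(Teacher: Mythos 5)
Your proposal is correct and follows essentially the same route as the paper: the same split into the regimes $\delta_{\sup}<2\delta_{\inf}$ and $\delta_{\sup}\ge 2\delta_{\inf}$, the same seeding of the induction with an arbitrarily small $\zeta_{n^{l}_{\eta}}$ obtained from the $Q$-linear decay in Lemma \ref{lem3}, and the same repeated application of Lemma \ref{lem4} up to $\ell=n^{u}_{\eta}$. The only (harmless) deviation is in deriving \eqref{e56}: the paper observes that $u_k-u^*$ satisfies the identical recursion \eqref{e115} and so gets \eqref{e56} with the same $m$, whereas your norm-equivalence-and-chaining argument yields a possibly larger integer, which still suffices for the statement.
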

\begin{proof}
If  $\delta_{\sup} < 2\delta_{\inf}$, then $\gamma_{\mathcal{A}}<1$ and,  by \eqref{e110}  in the proof of Lemma \ref{Theo4}, we have
\begin{equation*}
\|\mathcal{G}_{k+1}\| \leq \gamma_{\mathcal{A}} \|\mathcal{G}_k\|   \quad  \text{ for every }   k\geq 1.
\end{equation*}
Therefore, \eqref{e55} follows for the choice of $m :=  \ceil[\Big]{\frac{-\log 2}{\log \gamma_{\mathcal{A}}}}$.

Now, we consider the case in which $\delta_{\sup} \geq 2\delta_{\inf}$. In this case we have for $\rho_{\mathcal{A}}$ that  $\frac{1}{2} \leq\rho_{\mathcal{A}}<1$. First we decompose the interval $[\delta_{\inf}, \delta_{\sup}]$  into  the finite family of intervals $\{I_i\}^{n^{u}_{\eta}}_i$  defined by \eqref{e30} with a fixed length  $\eta \in  (0,\frac{\delta_{\inf}}{2}) \subset (0, \rho_{\mathcal{A}}\delta_{\inf}]$. Then due to \eqref{e21} and \eqref{e109}, we have for every $k \geq 1$
\begin{equation*}
  G(k,n^{u}_{\eta})=\sum^{n^{u}_{\eta}}_{i=1} (g^k_i)^2=\| \mathcal{G}_k \|^2,
\end{equation*}
where $  (g^k_i)^2$  is defined by \eqref{e29}. Moreover due to \eqref{e136} in the proof of  Lemma \ref{lem3}, there exists an integer $n^{l}_{\eta}>0$ such that for every  $\ell$  with  $1 \leq \ell \leq  n^{l}_{\eta}$, we have
\begin{equation*}
(g^{k+j}_{\ell})^2 \leq \rho^{2j}_{\mathcal{A}}(g^{k}_{\ell})^2 \quad   \text{ for every  } j \geq  0  \text{ and } k\geq 1.
\end{equation*}
By summing over all $\ell$ with  $1 \leq \ell \leq  n^{l}_{\eta}$, we obtain
\begin{equation*}
G(k+j,n^{l}_{\eta}) \leq \rho^{2j}_{\mathcal{A}} G(k,n^{l}_{\eta}) \leq \rho^{2j}_{\mathcal{A}}\| \mathcal{G}_k\|^2 \quad   \text{ for every  } j \geq  0  \text{ and } k\geq 1.
\end{equation*}
Now for the choice of $r_{n^{l}_{\eta}}:= \ceil[\Big]{\frac{\log\zeta_{n^{l}_{\eta}}}{2\log \rho_{\mathcal{A}}}}$ for any given $\zeta_{n^{l}_{\eta}}>0$, we have
\begin{equation*}
G(k+j,n^{l}_{\eta}) \leq \zeta_{n^{l}_{\eta}} \| \mathcal{G}_k\|^2 \quad   \text{ for every  } j \geq  r_{n^{l}_{\eta}}  \text{ and } k\geq 1,
\end{equation*}
and thus, by choosing $\zeta_{n^{l}_{\eta}}:=\frac{1}{4}(1+2\gamma^4_{\mathcal{A}})^{-(n^{u}_{\eta}-n^{l}_{\eta})}$ we are in the position to use Lemma \ref{lem4}. By using this lemma  we have for  $\ell$ with $n^{l}_{\eta}\leq \ell \leq n^{u}_{\eta}-1$ that
\begin{equation*}
\zeta_{\ell+1}=(1+2\gamma_{\mathcal{A}}^4)\zeta_{\ell} = \frac{1}{4}(1+2\gamma^4_{\mathcal{A}})^{\ell+1-n^{u}_{\eta}}         ,  \quad  \text{ and }  \quad  r_{\ell+1}= r_{\ell}+\Theta_{\ell}+1,
\end{equation*}
where $\Theta_{\ell} =\Theta(\zeta_{\ell},r_{\ell})$ has been defined as in Lemma \ref{alem1}. To be more precise, by applying Lemma \ref{lem4} once,   for the first iteration, we obtain
\begin{equation*}
G(k+j,n^{l}_{\eta}+1) \leq \zeta_{n^{l}_{\eta} +1}\|\mathcal{G}_{k}\|^2 = (1+2\gamma_{\mathcal{A}}^4)\zeta_{n^{l}_{\eta}} \|\mathcal{G}_{k}\|^2 = \frac{1}{4}(1+2\gamma^4_{\mathcal{A}})^{1-(n^{u}_{\eta}-n^{l}_{\eta})}\|\mathcal{G}_{k}\|^2
\end{equation*}
for all $j \geq r_{n^{l}_{\eta}+1}:=r_{n^{l}_{\eta}}+\Theta_{n^{l}_{\eta}}+1$.  Applying this lemma repeatedly  we conclude after  $(n^{u}_{\eta}-n^{l}_{\eta})-1$ iterations  that
\begin{equation*}
\|\mathcal{G}_{k+j}\|^2 = G(k+j,n^{u}_{\eta}) \leq \zeta_{n^{u}_{\eta}}\|\mathcal{G}_{k}\|^2=\frac{1}{4}\|\mathcal{G}_{k}\|^2     \quad \text{ for all }  j\geq {r_{n^{u}_{\eta}}}.
\end{equation*}
By putting $m =r_{n^{u}_{\eta}}$, \eqref{e55} holds.

 Moreover, the equivalence of \eqref{e55} with \eqref{e56} is justified due the fact that, similarly to \eqref{e17} for $\mathcal{G}_k$, it can easily be  shown that
\begin{equation}
\label{e115}
(u_{k+1}-u^*) = \frac{1}{\alpha_k }(\alpha_k \mathcal{I}-\mathcal{A})(u_{k}-u^*)   \quad \text{ for all  }  k = 0,1,2,\dots.
\end{equation}
Hence, the same machinery can be used to derive \eqref{e56} and this completes the proof.
\qed \end{proof}
\textbf{Proof of Theorem \ref{RConvergence}.} We need only to consider the case in which for every $k\geq 0$ we have $u_k \neq u^*$. In this case, we will show that $u_k \to u^*$ $R$-linearly. Due to \eqref{e115} and with a similar argument as in \eqref{e114}, we can write
\begin{equation}
\label{e116}
\begin{split}
\|u_{k+1}&-u^* \|^2 = \int_{\sigma(\mathcal{A})}\left( \frac{\alpha_k-\lambda}{\alpha_k} \right)^2 d(E_{\lambda}(u_{k}-u^*), (u_{k}-u^*)) \\
&\leq \gamma^2_{\mathcal{A}} \int_{\sigma(\mathcal{A})} \, d(E_{\lambda}(u_{k}-u^*), (u_{k}-u^*)) =  \gamma^2_{\mathcal{A}} \norm{u_{k}-u^*}^2  \quad    \text{ for every } k\geq 1.
\end{split}
\end{equation}
Moreover, due to \eqref{e56} in Lemma \ref{R-1}, we obtain
\begin{equation}
\label{e117}
\|u_{jm+1}-u^*\| \leq (\frac{1}{2})^{j}\|u_{1}-u^*\|  \quad \text{for all } j\geq 0,
\end{equation}
where $m$ has been defined in Lemma \ref{R-1}. Now for every $k\geq 1$, there exists an integer $j$ such that $1+jm \leq k < 1+(j+1)m$. Therefore, it follows that $k-(jm+1)<m$ and $j \geq\frac{k}{m}-1$.  Using \eqref{e116} and \eqref{e117}, we obtain
\begin{equation*}
\begin{split}
\|u_{k}-u^* \| &\leq \gamma^m_{\mathcal{A}}\|u_{jm+1}-u^* \|\leq \gamma^m_{\mathcal{A}}(\frac{1}{2})^{j}\|u_{1}-u^*\| \leq  \gamma^m_{\mathcal{A}}(\frac{1}{2})^{\frac{k}{m}-1}\|u_{1}-u^*\|\\
& = c_1c^k_2\|u_{1}-u^*\|    \qquad \text{ for all } k\geq 1,
\end{split}
\end{equation*}
where $c_1:=\gamma^m_{\mathcal{A}}(\frac{1}{2})^{-1}$ and $c_2:=(\frac{1}{2})^{\frac{1}{m}}<1$, and this completes the proof.

\begin{remark}
\label{Remark3}
If  $\sigma(\mathcal{A})$ is finite,  we can infer that $\sigma(\mathcal{A})=\{ \lambda_i :  i= 1,\dots, m \}$ with $\lambda_{i+1}>\lambda_{i}$ for $i=1,\dots,m-1$,  $\lambda_1 = \delta_{\inf}$,  and $\lambda_m=\delta_{sup}$. Then for every arbitrary $\eta >0$ and  partitioning $\{  I_i \}^{n^{u}_{\eta}}_{i=1}$ of $[\delta_{\inf},\delta_{\sup}]$,  we obtain for  $k \geq 1$ that
\begin{equation}
\label{e132}
\begin{split}
\|\mathcal{G}_{k+1}\|^2& = \int^{\delta_{\sup}}_{\delta_{\inf}} \left[ \prod^{k}_{p=1}\left( \frac{\alpha_p-\lambda}{\alpha_p} \right)^2 \right]  \, d(E_{\lambda}\mathcal{G}_{1}, \mathcal{G}_{1}) = \sum^{n^{u}_{\eta}}_{i = 1} \int_{I_i}  \left[ \prod^{k}_{p=1}\left( \frac{\alpha_p-\lambda}{\alpha_p} \right)^2 \right] \, d(E_{\lambda}\mathcal{G}_{1}, \mathcal{G}_{1})\\
& = \sum^{m}_{i=1}\left[ \prod^{k}_{p=1}\left( \frac{\alpha_p-\delta_{\inf}}{\alpha_p} \right)^2\right]\, \|E_{\{ \lambda _i \}}\mathcal{G}_1\|^2=   \sum^{m}_{i=1} (g^{k+1}_{i})^2,
\end{split}
\end{equation}
where $(g^{k+1})^2_{i}:=\left[ \prod^{k}_{p=1}\left( \frac{\alpha_p-\delta_{\inf}}{\alpha_p} \right)^2\right]\, \|E_{\{ \lambda _i \}}\mathcal{G}_1\|^2$. Then  the statements of Lemma \ref{lem3} is true for $n^{l}_{\eta} = 1$. Further, Lemma \ref{R-1} and Theorem \ref{RConvergence} are applicable. Moreover, in the proof of Lemma \ref{alem1}, similarly to \eqref{e132}, all the integrations  are replaced by finite sum and  it follows that $c := \max \{ \rho_{\mathcal{A}}, \frac{1}{2} \}$. See \cite{MR1880051,MR1225468} for more details.
\end{remark}
\begin{remark}
\label{Remark5}
Note that, due to Theorem \ref{Theo4}, the numerical behaviour of Algorithm \ref{BBa} is strongly depending on $\sigma(\mathcal{A})$.  In fact, this relation can be explained based on the value of the spectral condition number $\kappa(\mathcal{A}):=\|\mathcal{A}\|\|\mathcal{A}^{-1}\| = \frac{\delta_{sup}}{\delta_{\inf}}$.  It can be seen that  $\gamma_{\mathcal{A}}= \kappa(\mathcal{A})-1 $ and  $\rho_{\mathcal{A}}=1-\frac{1}{\kappa(\mathcal{A})} < 1$. Further, depending on the value of $\kappa(\mathcal{A})$, we can summarize the following cases:
\begin{enumerate}
\item $\kappa(\mathcal{A}) < 2$ : In this case, due to Theorem \ref{Theo4},  Algorithm \ref{BBa} is Q-linearly convergent with the rate $\gamma_{\mathcal{A}}<1$.  Moreover, from \eqref{e110}, we infer that the sequence  $\{ \|\mathcal{G}_k\| \}_k$  in monotone decreasing.
\item $\kappa(\mathcal{A}) \geq 2$ : This case is  more delicate. Recall from \eqref{e21} that  for every fixed $\eta \in (0,\frac{\delta_{\inf}}{2})$,   and $k\geq 1$,  we have $\|\mathcal{G}_{k+1}\|^2  = \sum^{n^{u}_{\eta}}_{i=1} (g^{k+1}_i )^2 $ where  the values $(g^{k+1}_{i})^2$ with  $i =1,\dots ,n^{u}_{\eta}$ are defined by
\begin{equation}
\label{e179}
(g^{k+1}_{i})^2  = \int_{I_i}\left( 1-\frac{\lambda}{\alpha_k} \right)^2 \left[ \prod^{k-1}_{p=1}\left( \frac{\alpha_p-\lambda}{\alpha_p} \right)^2 \right]\, d(E_{\lambda}\mathcal{G}_1, \mathcal{G}_1).
\end{equation}
 Due to Lemma \ref{lem3}, there exists an index $n^{l}_{\eta}\geq1$ such that the sequences $ \{|g^{k}_{i}|\}_k$  with $i =1,\dots,n^{l}_{\eta}$  are Q-linearly monotonically decreasing with factor $\rho_{\mathcal{A}}<1$. Therefore it remains only to consider the values of $ |g^{k}_{i}|$ for  $i=n^{l}_{\eta}+1,\dots,n^{u}_{\eta}$.  From \eqref{e179}, it can be shown that for every interval $I_i$ with $\alpha_k \in I_i$ it holds that  $ |g^{k+1}_{i}| \leq \frac{\eta}{\alpha_k}|g^{k}_{i}| < \frac{1}{2}|g^{k}_{i}|$. On the other hand, if for  an interval $I_i$ it holds that $a_i > 2\alpha_k$, then we obtain   $|g^{k+1}_{i}| > |g^{k}_{i}|$.    Further, for the last interval $I_{n^{u}_{\eta}}$,  we have $\frac{|g^{k+1}_{n^{u}_{\eta}}|}{|g^{k}_{n^{u}_{\eta}}|} \leq \kappa(\mathcal{A})-1$. These facts explain the potential nonmonotonic behaviour of the sequence $ \{\| \mathcal{G}_k\| \}_k$ and its dependence on $\kappa(A)$.
\end{enumerate}
%As can be seen in the proofs of this section,  throughout the convergence rates depend of the values of $\gamma_{\mathcal{A}}$ and $\rho_{\mathcal{A}}$, and %consequently, on $\kappa(A)$.
\end{remark}
\begin{remark}[Preconditioning]
Due to Remark \ref{Remark5}, the convergence of Algorithm \ref{BBa} depends strongly on $\kappa(\mathcal{A})$. Analogously to the case of the conjugate gradient methods, the problem  \eqref{QP} can, by using an appropriate uniformly positive, self-adjoint, and continuous operator $\mathcal{C}:\mathcal{H} \to \mathcal{H}$, be transformed to the following equivalent problem
\begin{equation*}
\min_{z \in \mathcal{H}}\tilde{\mathcal{F}}(z):= \frac{1}{2}( \tilde{\mathcal{A}} z, z)-(\tilde{b} ,z),
\end{equation*}
where $\tilde{\mathcal{A}}: =\mathcal{C}^{-\frac{1}{2}}\mathcal{A}\mathcal{C}^{-\frac{1}{2}}$, $\tilde{b}:=\mathcal{C}^{-\frac{1}{2}}b$ and $z:=\mathcal{C}^{\frac{1}{2}}u$.  Clearly, $\sigma(\tilde{\mathcal{A}})=\sigma(\mathcal{C}^{-1}\mathcal{A})$ and, as a consequnce, the spectrum of $\tilde{\mathcal{A}}$ is completely determined by $\mathcal{C}$ and $\mathcal{A}$. Thus, the operator $\mathcal{C}$ can be chosen such that the application of Algorithm \ref{BBa} yields  faster convergence. In \cite{MR1417682},  preconditioning has been studied for Algorithm \ref{BBa} in the case of the Euclidean space $\mathbb{R}^n$.  For the case of infinite-dimensional Hilbert spaces,  preconditioning methods have been studied for the conjugate gradient methods. Among them we can mention \cite{MR1914497,MR2338397,MR2082565,MR3348129,MR2769031}.
\end{remark}
\subsubsection{The Case of a Positive Compact Perturbation}
\label{SecCompact}
In many situations of practical importance, we are faced with  problems of the form \eqref{QP}, in which $\mathcal{A}: \mathcal{H} \to \mathcal{H}$ is a compact perturbation of the identity. Therefore, it is of interest to consider this case separately.  Here, we have
\begin{equation}
\tag{poco}
\label{poco}
\mathcal{A} := \mathcal{T}+\beta \mathcal{I}
\end{equation}
with a positive self-adjoint compact operator $\mathcal{T}:\mathcal{H} \to \mathcal{H}$ and a constant $\beta>0$.  In this subsection we show how the special form of the spectrum of operators with the form \eqref{poco} allows us to simplify the proof of Theorems \ref{Theo4} and \ref{RConvergence} of the previous section. For $\mathcal{T}$ as above we have $\sigma(\mathcal{T})=\{0\}\cup\Sigma_{\mathcal{T}}$, where   $ \Sigma_{\mathcal{T}}: = \{ \delta_i : i \in N\}$ admits an enumeration for a countable set $N$. This set contains an ordered sequence of nonzero pairwise distinct eigenvalues, i.e, $\delta_{i+1} < \delta_i$ for $i\in N$. Moreover  $\delta_1=\|\mathcal{T}\|$ and for every $i\in N $, we have $\dim(\ker(\delta_i-\mathcal{T}))<\infty$ where $\ker(L):=\{ u \in \mathcal{H}: Lu =0\}$ for a given linear operator $L:\mathcal{H} \to \mathcal{H}$. Further, if $N$ is infinite, $\delta_n \to 0$ and  $N$ can be taken to be $\mathbb{N}:=\{1,2,3,\dots\}$. Then we have
\begin{equation*}
\mathcal{T} = \sum_{i \in N}\delta_i E_{\{ \delta_i\}},
\end{equation*}
where $E_{\{\delta_i\}}$ is the orthogonal projection to the space $\ker(\mathcal{T}-\delta_{i})$. In the case that $N$ is finite, the convergence can be proven as explained in Remark \ref{Remark3}, therefore we assume here that $N$ is infinite. Then, due to spectral mapping Theorem, we have
\begin{equation}
\label{e150}
\sigma(\mathcal{A}) = \{ \beta \} \cup  \{ \lambda_i : \lambda_i =\delta_i+\beta \text{ for } i \in N  \},
\end{equation}
where $\beta$ is a cluster point of the spectrum with $\lambda_{i} \to \beta$, and $\lambda_{i+1}< \lambda_{i}$ for ever $i\geq 1$. Then, for every measurable function $f: \sigma(\mathcal{A}) \to \mathbb{R}$, the operator  $f(\mathcal{A})$ is defined by
\begin{equation*}
f(\mathcal{\mathcal{A}}) =  \int_{\sigma(\mathcal{A})}f(\lambda) \, dE_{\lambda}=\sum_{i = 1}^{\infty}f(\lambda_i) E_{\{ \lambda_i\}} +f(\beta) E_{\{ \beta\}},
\end{equation*}
where $E_{\{ \beta \}}$ is the orthogonal projection to the space $\ker(\mathcal{T})$. Note that $E_{\{ \beta \}}=0$ unless zero is an eigenvalue of $\mathcal{T}$.  For convenience in notation, we denote $\lambda_0 = \beta$. Then we have
 \begin{equation}
\label{e14}
f(\mathcal{\mathcal{A}}) =  \sum_{i = 0}^{\infty}f(\lambda_i) E_{\{ \lambda_i\}},
\end{equation}
and similarly, for every $x,y \in \mathcal{H}$ we have
\begin{equation}
\label{e129}
(f(\mathcal{A})x,y) = \int_{\sigma(\mathcal{A})}f(\lambda)d(E_{\lambda}x,y) = \sum_{i = 0}^{\infty}f(\lambda_i) (E_{\{ \lambda_i \}}x,y).
\end{equation}
 Due to structure of $\mathcal{A}$ and the definition of $\delta_{\inf}$ and  $\delta_{\sup}$, we have
\begin{equation}
\label{e174}
\delta_{\inf} =\lambda_{0} =  \beta,    \quad   \delta_{\sup} =\lambda_1 = \| \mathcal{T} \| +\beta.
\end{equation}
Moreover, due to \eqref{e14} and \eqref{e129}, for  $\mathcal{G}_1 \in \mathcal{H}$  we can write
\begin{equation*}
\mathcal{G}_1 = \sum_{i = 0}^{\infty} E_{\{ \lambda_i\}}\mathcal{G}_1 \quad \text{ and  } \quad \|\mathcal{G}_1\|^2 = \sum_{i = 0}^{\infty}(E_{\{ \lambda_i \}}\mathcal{G}_1,\mathcal{G}_1) =  \sum_{i = 0}^{\infty}\| E_{\{ \lambda_i \}}\mathcal{G}_1\|^2.
\end{equation*}
 Using \eqref{e17} and  \eqref{e14}, we have
\begin{equation*}
\mathcal{G}_2 = \frac{1}{\alpha_1} (\alpha_1-\mathcal{A})\mathcal{G}_1 = \sum^{\infty}_{i = 0} \frac{1}{\alpha_1} (\alpha_1-\lambda_i)E_{\{\lambda_i\}}\mathcal{G}_1,
\end{equation*}
and, in a similar manner by induction, we obtain for every $ k \geq 1$ that
\begin{equation}
\label{e128}
\begin{split}
\mathcal{G}_{k} =  \sum^{\infty}_{i=0} \left[ \prod^{k-1}_{p=1}\left( \frac{\alpha_p-\lambda_i}{\alpha_p} \right) \right] E_{\{\lambda_i\}}\mathcal{G}_1 \quad \text{ and } \quad
\|\mathcal{G}_{k}\|^2 = \sum^{\infty}_{i=0} \left[ \prod^{k-1}_{p=1}\left( \frac{\alpha_p-\lambda_i}{\alpha_p} \right)^2 \right]\| E_{\{\lambda_i\}}\mathcal{G}_1\|^2 .
\end{split}
\end{equation}
For every $i\geq 0$ and $k\geq 1$ we define
\begin{equation}
\label{e4a}
(g^{k}_i)^2:= \left[ \prod^{k-1}_{p=1}\left( \frac{\alpha_p-\lambda_i}{\alpha_p} \right)^2 \right]\| E_{\{\lambda_i\}}\mathcal{G}_1\|^2=\left[ \prod^{k-1}_{p=1}\left( \frac{\alpha_p-\lambda_i}{\alpha_p} \right)^2 \right](g^1_i)^2,
\end{equation}
and conclude that
\begin{equation}
\label{e6}
\|\mathcal{G}_{k}\|^2 = \sum^{\infty}_{i = 0} (g^{k}_{i})^2 \quad  \text{ and  } \quad (g^{k+1}_{i})^2 = \left( \frac{\alpha_k-\lambda_i}{\alpha_k}\right)^2 (g^{k}_{i})^2.
\end{equation}
Using  \eqref{eqkk11} and \eqref{e128}, we can write for every $k \geq 1$ and any chosen  $\alpha_{p} =\alpha^{BB1}_{p}$ or  $\alpha_{p} =\alpha^{BB2}_{p}$  with $p = 1,2,\dots,k $   that
\begin{equation}
\label{e4}
\begin{split}
\|\mathcal{G}_{k+1}\|^2=& \sum^{\infty}_{i=0} \left[ \prod^k_{p=1}\left( \frac{\alpha_p-\lambda_i}{\alpha_p} \right)^2 \right]\| E_{\{\lambda_i\}}\mathcal{G}_1\|^2 \leq  \gamma^{2}_{\mathcal{A}}\sum^{\infty}_{i=0}\left[ \prod^{k-1}_{p=1}\left( \frac{\alpha_p-\lambda_i}{\alpha_p} \right)^2 \right]\| E_{\{\lambda_i\}}\mathcal{G}_1\|^2 =  \gamma^{2}_{\mathcal{A}}\|\mathcal{G}_{k}\|^2.
\end{split}
\end{equation}
Hence, analogously to Theorem \ref{Theo4}, we can conclude that for \eqref{QP} with an operator $\mathcal{A}$ of the form  \eqref{poco}, Algorithm \ref{BBa} is $Q$-linearly convergent, provided that   $\delta_{\sup} < 2\delta_{\inf}$.  Next we consider the general case.  In a similar manner as in the previous subsubsection, we define
\begin{equation*}
G(k,\ell):= \sum_{i  \in \{0\}\cup\{i: i\geq \ell\}} (g^k_{i})^2    \quad \text{ for every }  \ell, k\geq 1.
\end{equation*}
Then, by \eqref{e128}  we have
 \begin{equation}
\label{e123}
G(k,1)= \sum_{i  \in \{0\}\cup\{i: i\geq1\}} (g^k_{i})^2=\sum^{\infty}_{i =1} (g^k_{i})^2 = \|\mathcal{G}_k\|^2     \quad \text{ for every }  k\geq 1.
\end{equation}
First, analogously to Lemma \ref{lem3}, we will show that
that there exists an index $n^{u}$ depending on $\delta_{\inf}$ and $\delta_{\sup}$ such that the sequences $\{g^{k}_{i}\}_k$ with $ i \geq n^{u}$ converge to zero $Q$-linearly.

\begin{lemma}
\label{lem1}
There exists a positive integer $n^{u}$ such that for any  $i \in \{0\}\cup \{i: i\geq n^{u}\}$, the sequences  $\{g^{k}_{i}\}_k$ converge to zero $Q$-linearly with the factor $\rho_{\mathcal{A}}$ as $k$ tends to infinity and we also have
 \begin{equation}
\label{e35}
\lim_{k \to \infty}G(k,n^{u}) =0.
\end{equation}
\end{lemma}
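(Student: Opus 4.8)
The plan is to exploit the explicit diagonal representation in \eqref{e6}, namely $(g^{k+1}_{i})^{2} = \big(\tfrac{\alpha_k-\lambda_i}{\alpha_k}\big)^{2}(g^{k}_{i})^{2}$, together with the fact that $\mathcal{A}$ is a compact perturbation of the identity, so that the eigenvalues $\lambda_i=\delta_i+\beta$ can accumulate only at $\beta=\delta_{\inf}$. I would take $n^{u}$ to be the smallest positive integer such that $\lambda_i \le (1+\rho_{\mathcal{A}})\delta_{\inf}$ for all $i\ge n^{u}$. This is well defined: since $\rho_{\mathcal{A}}>0$ we have $\delta_{\inf} < (1+\rho_{\mathcal{A}})\delta_{\inf}$, and because $\lambda_i\to\beta=\delta_{\inf}$ only finitely many eigenvalues can exceed the threshold $(1+\rho_{\mathcal{A}})\delta_{\inf}$.

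Next I would establish the uniform contraction estimate $\big|\tfrac{\alpha_k-\lambda_i}{\alpha_k}\big|\le\rho_{\mathcal{A}}$ for every $k\ge 1$, valid whenever $i=0$ or $i\ge n^{u}$, by the same two-case argument used in Lemma \ref{lem3}. For $i=0$ we have $\lambda_0=\delta_{\inf}$, and since $\delta_{\inf}\le\alpha_k\le\delta_{\sup}$ it follows that $0\le 1-\tfrac{\delta_{\inf}}{\alpha_k}\le 1-\tfrac{\delta_{\inf}}{\delta_{\sup}}=\rho_{\mathcal{A}}$. For $i\ge n^{u}$: if $\alpha_k-\lambda_i\ge 0$ then $\big|\tfrac{\alpha_k-\lambda_i}{\alpha_k}\big|=1-\tfrac{\lambda_i}{\alpha_k}\le 1-\tfrac{\delta_{\inf}}{\delta_{\sup}}=\rho_{\mathcal{A}}$, using $\lambda_i\ge\delta_{\inf}$ and $\alpha_k\le\delta_{\sup}$; if $\alpha_k-\lambda_i<0$ then both $\alpha_k$ and $\lambda_i$ lie in $[\delta_{\inf},(1+\rho_{\mathcal{A}})\delta_{\inf}]$ (as $\delta_{\inf}\le\alpha_k<\lambda_i\le(1+\rho_{\mathcal{A}})\delta_{\inf}$), so $\big|\tfrac{\alpha_k-\lambda_i}{\alpha_k}\big|=\tfrac{\lambda_i-\alpha_k}{\alpha_k}\le\tfrac{(1+\rho_{\mathcal{A}})\delta_{\inf}-\delta_{\inf}}{\delta_{\inf}}=\rho_{\mathcal{A}}$. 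Substituting into \eqref{e6} gives $(g^{k+1}_{i})^{2}\le\rho_{\mathcal{A}}^{2}(g^{k}_{i})^{2}$, i.e. $\{g^{k}_{i}\}_k$ converges to zero $Q$-linearly with factor $\rho_{\mathcal{A}}<1$, and iterating, $(g^{k}_{i})^{2}\le\rho_{\mathcal{A}}^{2(k-1)}(g^{1}_{i})^{2}$.

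For \eqref{e35} I would sum this last bound over $i\in\{0\}\cup\{i:i\ge n^{u}\}$ and dominate by the squared gradient norm: $G(k,n^{u})\le\rho_{\mathcal{A}}^{2(k-1)}\sum_{i=0}^{\infty}(g^{1}_{i})^{2}=\rho_{\mathcal{A}}^{2(k-1)}\|\mathcal{G}_1\|^{2}\to 0$ as $k\to\infty$, since $\rho_{\mathcal{A}}<1$ and $\|\mathcal{G}_1\|<\infty$. The one place deserving care is the well-definedness of $n^{u}$: it is precisely the compactness of $\mathcal{T}$ that forces $\lambda_i\to\beta=\delta_{\inf}$ and hence leaves only finitely many eigenvalues above $(1+\rho_{\mathcal{A}})\delta_{\inf}$; once that finiteness is secured, everything else is a direct uniform estimate, and none of the delicate partitioning machinery of Lemmas \ref{lem3}--\ref{R-1} (needed in the noncompact case because of the continuum of spectrum near $\delta_{\inf}$) is required here.
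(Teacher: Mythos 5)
Your proposal is correct and follows essentially the same route as the paper: the same choice of $n^{u}$ (the condition $\lambda_i\le(1+\rho_{\mathcal{A}})\delta_{\inf}$ is exactly the paper's $\lambda_i-\delta_{\inf}\le\rho_{\mathcal{A}}\delta_{\inf}$), the same two-case contraction estimate $\abs{(\alpha_k-\lambda_i)/\alpha_k}\le\rho_{\mathcal{A}}$, and the same summation over $i\in\{0\}\cup\{i:i\ge n^{u}\}$ dominated by $\|\mathcal{G}_1\|^2$ to get \eqref{e35}.
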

\begin{proof}
Since  $\{ \lambda_i \}_i$ is a positive and decreasing sequence and $\lambda_i \to \beta = \delta_{\inf}$, there exists a positive integer $n^{u}$ such that for every $i \geq n^{u}$ we have $\lambda_i - \delta_{\inf} \leq \rho_{\mathcal{A}}\delta_{\inf}$.  For every $i \in \{0\}\cup \{i: i\geq n^{u}\}$ and $k\geq 1$, we have the following two cases:
\begin{enumerate}
\item If $\alpha_k-\lambda_i \geq 0$ then we have
\begin{equation*}
\abs{\frac{\alpha_k-\lambda_i}{\alpha_k}} = \left(1- \frac{\lambda_i}{\alpha_k}\right) \leq  \rho_{\mathcal{A}} < 1.
\end{equation*}
\item  If $\alpha_k-\lambda_i < 0$ then we have  $\alpha_k \in [\delta_{\inf}, \lambda_i )$ and thus
\begin{equation*}
\abs{\frac{\alpha_k-\lambda_i}{\alpha_k}} = \frac{\lambda_i-\alpha_k}{\alpha_k} \leq \frac{\lambda_i-\delta_{\inf}}{\delta_{\inf}} \leq \rho_{\mathcal{A}}.
\end{equation*}
\end{enumerate}
Therefore, by using \eqref{e4a} and \eqref{e6}, we can infer for every $k\geq 1$ and  $i \in \{0\}\cup \{i: i\geq n^{u}\}$  that
\begin{equation}
\label{e33}
(g^{k+1}_{i})^2 = \left( \frac{\alpha_k-\lambda_i}{\alpha_k}\right)^2 (g^{k}_{i})^2 \leq  \rho^{2}_{\mathcal{A}} (g^{k}_{i})^2 \leq \cdots \leq  \rho^{2k}_{\mathcal{A}} (g^{1}_{i})^2.
\end{equation}
Now due to \eqref{e33}, we conclude that the sequences  $\{g^{k}_{i}\}_k$ converge to zero $Q$-linearly for all  $i \in \{0\}\cup \{i: i\geq n^{u}\}$. Moreover, using \eqref{e33} and summing up for every $i \in \{0\}\cup \{i: i\geq n^{u}\}$ we obtain for every $k \geq 1$  that
\begin{equation*}
G(k+1,n^{u}) =\sum_{i \in \{0\}\cup \{i: i\geq n^{u}\}} (g^{k+1}_i)^2 \leq \rho^{2k}_{\mathcal{A}}\sum_{i \in \{0\}\cup \{i: i\geq n^{u}\}} (g^{1}_i)^2 = \rho^{2k}_{\mathcal{A}}\|\mathcal{G}_1\|^2.
\end{equation*}
Thus \eqref{e35} follows.
\qed \end{proof}
Next, we consider results which are analogous to Lemmas \ref{alem1} and \ref{lem4} for the special case \eqref{poco}.
\begin{lemma}
 \label{clem1}
For each integer $\ell$ with  $1 < \ell  \leq  n^{u}$,  and $k\geq 1$,  the following property holds:

If the condition
\begin{equation}
\label{e118}
G(k+j,\ell) \leq \overline{\zeta}\|\mathcal{G}_k\|^2  \quad \text{ for all } j\geq \overline{r}
\end{equation}
holds for some positive  $\overline{r} \in \mathbb{N}$ and $\overline{\zeta} \in \mathbb{R}_+$, then there exists an integer $\hat{j} \in \{ \overline{r},\cdots ,\overline{r}+\Theta+1\}$  such that
\begin{equation*}
(g^{k+\hat{j}}_{\ell-1})^2 \leq 2\overline{\zeta}\| \mathcal{G}_k\|^2,
\end{equation*}
where $\Theta=\Theta(\overline{\zeta},\overline{r})$ is defined as in Lemma \ref{alem1} with $c := \max \{ \rho_{\mathcal{A}}, \frac{1}{2} \}$.
\end{lemma}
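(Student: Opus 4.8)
The plan is to mirror the proof of Lemma~\ref{alem1}, replacing the partition $\{I_i\}_{i=1}^{n^{u}_{\eta}}$ of $[\delta_{\inf},\delta_{\sup}]$ by the decreasingly ordered point spectrum $\{\lambda_i\}_{i\geq 0}$ (with $\lambda_0=\beta=\delta_{\inf}$ and $\lambda_1=\delta_{\sup}$), and the spectral integrals by the series in \eqref{e129}. Under this dictionary the role played by the left endpoint $a_{\ell+1}$ of the ``next interval up'' in Lemma~\ref{alem1} is now played by the eigenvalue $\lambda_{\ell-1}$ — the smallest of the ``large'' eigenvalues $\lambda_1>\dots>\lambda_{\ell-1}$, all of which exceed $\lambda_\ell$; this is why the conclusion is stated for $g_{\ell-1}$ rather than $g_{\ell+1}$. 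Since here we deal with a single point $\lambda_{\ell-1}$ instead of an interval of width $\eta$, the ``$\eta/\delta_{\inf}$'' slack disappears, which is precisely why the contraction constant improves to $c=\max\{\rho_{\mathcal{A}},\tfrac{1}{2}\}$.

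I would argue by contradiction: suppose
\[
(g^{k+j}_{\ell-1})^2 > 2\overline{\zeta}\,\norm{\mathcal{G}_k}^2 \qquad\text{for all } j\in\{\overline{r},\dots,\overline{r}+\Theta\},
\]
and show that then $(g^{k+\overline{r}+\Theta+1}_{\ell-1})^2 \leq 2\overline{\zeta}\,\norm{\mathcal{G}_k}^2$, which (as in Lemma~\ref{alem1}) suffices. First, iterating the recursion $(g^{m+1}_{i})^2=\bigl(\tfrac{\alpha_m-\lambda_i}{\alpha_m}\bigr)^2(g^{m}_{i})^2$ from \eqref{e6} and using $\abs{(\alpha_p-\lambda_i)/\alpha_p}\leq\gamma_{\mathcal{A}}$, a consequence of \eqref{eqkk11}, one obtains
\[
(g^{k+\overline{r}+1}_{\ell-1})^2 \leq \gamma_{\mathcal{A}}^{2(\overline{r}+1)}(g^{k}_{\ell-1})^2 \leq \gamma_{\mathcal{A}}^{2(\overline{r}+1)}\norm{\mathcal{G}_k}^2,
\]
exactly as in \eqref{e112}.

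Next, using \eqref{e2}, \eqref{e36} and \eqref{e129} one writes
\[
\alpha^{BB1}_{k+j+1} = \frac{\sum_{i\geq 0}\lambda_i (g^{k+j}_{i})^2}{\sum_{i\geq 0}(g^{k+j}_{i})^2}, \qquad \alpha^{BB2}_{k+j+1}=\frac{\sum_{i\geq 0}\lambda_i^{2} (g^{k+j}_{i})^2}{\sum_{i\geq 0}\lambda_i (g^{k+j}_{i})^2},
\]
and splits each sum into the ``large part'' $\mathcal{Z}:=\sum_{i=1}^{\ell-1}(g^{k+j}_{i})^2$ (all weights $\lambda_i\geq\lambda_{\ell-1}$) and the ``small part'' $\sum_{i\in\{0\}\cup\{i\geq\ell\}}(g^{k+j}_{i})^2 = G(k+j,\ell)\leq\overline{\zeta}\norm{\mathcal{G}_k}^2$ (all weights $\lambda_i\leq\lambda_\ell<\lambda_{\ell-1}$), the last bound being \eqref{e118}. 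Since $\mathcal{Z}\geq(g^{k+j}_{\ell-1})^2 > 2\overline{\zeta}\norm{\mathcal{G}_k}^2$ by the contradiction hypothesis, the small part is smaller than $\tfrac{1}{2}\mathcal{Z}$, and the chain of estimates \eqref{e25}--\eqref{e27} of the proof of Lemma~\ref{alem1} — with $a_{\ell+1}$ replaced by $\lambda_{\ell-1}$ — yields
\[
\tfrac{2}{3}\lambda_{\ell-1}\leq \alpha_{k+j+1}\leq\delta_{\sup} \qquad\text{for all } j\in\{\overline{r},\dots,\overline{r}+\Theta\},
\]
whether $\alpha_{k+j+1}=\alpha^{BB1}_{k+j+1}$ or $\alpha_{k+j+1}=\alpha^{BB2}_{k+j+1}$. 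The BB2 case requires the algebraic rearrangement of \eqref{e25a}, now with finite sums, which I expect to be the most delicate piece of bookkeeping, although it introduces no new idea.

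Finally, for the single eigenvalue $\lambda_{\ell-1}$ the two cases $\alpha_{k+j+1}-\lambda_{\ell-1}\geq 0$ and $\alpha_{k+j+1}-\lambda_{\ell-1}<0$ give
\[
\abs{1-\frac{\lambda_{\ell-1}}{\alpha_{k+j+1}}}\leq \max\Bigl\{\rho_{\mathcal{A}},\ \frac{\lambda_{\ell-1}}{\tfrac{2}{3}\lambda_{\ell-1}}-1\Bigr\} = \max\{\rho_{\mathcal{A}},\tfrac{1}{2}\} =: c < 1,
\]
the first alternative following from \eqref{eqkk11} and the second from the step-size lower bound just established. Hence $(g^{k+j+2}_{\ell-1})^2\leq c^{2}(g^{k+j+1}_{\ell-1})^2$ for every $j\in\{\overline{r},\dots,\overline{r}+\Theta\}$ by \eqref{e6}; iterating this $\Theta$ times starting from $(g^{k+\overline{r}+1}_{\ell-1})^2$ and combining with the bound above gives
\[
(g^{k+\overline{r}+\Theta+1}_{\ell-1})^2\leq c^{2\Theta}\gamma_{\mathcal{A}}^{2(\overline{r}+1)}\norm{\mathcal{G}_k}^2\leq 2\overline{\zeta}\norm{\mathcal{G}_k}^2
\]
by the definition of $\Theta=\Theta(\overline{\zeta},\overline{r})$, which completes the proof. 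I do not foresee a genuinely new obstacle beyond Lemma~\ref{alem1}: the discrete, single-point structure of the spectrum only streamlines the estimates, the one real task being to transcribe the step-size lower bound — especially for BB2 — to the summation setting.
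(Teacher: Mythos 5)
Your proposal follows essentially the same route as the paper's proof: contradiction hypothesis on $(g^{k+j}_{\ell-1})^2$, the $\gamma_{\mathcal{A}}^{2(\overline{r}+1)}$ bound, the split of the BB quotients into the large part $\sum_{i=1}^{\ell-1}(g^{k+j}_i)^2 \geq (g^{k+j}_{\ell-1})^2$ and the small part $G(k+j,\ell)\leq\overline{\zeta}\|\mathcal{G}_k\|^2$ to obtain $\tfrac{2}{3}\lambda_{\ell-1}\leq\alpha_{k+j+1}\leq\delta_{\sup}$, and then the two-case estimate giving the contraction factor $c=\max\{\rho_{\mathcal{A}},\tfrac{1}{2}\}$. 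The argument is correct, including your observation that the single-point spectrum removes the $\eta/\delta_{\inf}$ slack present in Lemma \ref{alem1}.
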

\begin{proof}
Similarly to the proof of Lemma \ref{alem1}, we assume that
\begin{equation}
\label{e119}
(g^{k+j}_{\ell -1})^2 > 2\overline{\zeta}\| \mathcal{G}_k\|^2  \quad \text{ for all  } j \in \{\overline{r}, \cdots , \overline{r} +\Theta\},
\end{equation}
and we show that
\begin{equation*}
(g^{k+\overline{r}+\Theta+1}_{\ell -1})^2 \leq  2\overline{\zeta}\| \mathcal{G}_k\|^2.
\end{equation*}
By Algorithm \ref{BBa},  for every $j \in \{\overline{r}, \cdots , \overline{r} +\Theta\}$ we have either $\alpha_{k+j} = \alpha^{BB1}_{k+j}$  or $\alpha_{k+j} = \alpha^{BB2}_{k+j}$.
Using  \eqref{e2}, \eqref{e36}, and \eqref{e129}, we can write for every $j \in \{\overline{r}, \cdots , \overline{r} +\Theta\}$ that
\begin{equation}
\label{e7}
\alpha^{BB1}_{k+j+1} = \frac{\sum^{\infty}_{i =0}(g^{k+j}_i)^2\lambda_i}{\sum^{\infty}_{i =0}(g^{k+j}_i)^2}, \text{ and } \quad  \alpha^{BB2}_{k+j+1} = \frac{\sum^{\infty}_{i =0}(g^{k+j}_i)^2\lambda^2_i}{\sum^{\infty}_{i =0}(g^{k+j}_i)^2\lambda_i}.
\end{equation}
Using \eqref{e118} and \eqref{e7}, we obtain for every  $j \in \{\overline{r}, \cdots , \overline{r} +\Theta\}$ that
\begin{equation}
\label{e10}
\frac{ \lambda_{\ell-1}\sum^{\ell -1}_{i=1} (g^{k+j}_i )^2}{ \overline{\zeta}\|\mathcal{G}_k\|^2 + \sum^{\ell -1}_{i=1} (g^{k+j}_i )^2} \leq \alpha^{BB1}_{k+j+1}  \leq \delta_{\sup}.
\end{equation}
Moreover, using  \eqref{e118} and the fact that $\lambda_{\ell-1} \geq \lambda_{i} $ for every $i \in \{ 0\} \cup \{ i:  i\geq \ell \}$, we obtain
\begin{equation}
\label{e120}
\sum_{ i \in \{ 0 \}\cup\{i: i\geq \ell\}} (g^{k+j}_i)^2\lambda_i \leq \lambda_{\ell-1}G(k+j,\ell) \leq \lambda_{\ell-1}\overline{\zeta}\| \mathcal{G}_k\|^2  \quad \text{ for every }j \in \{\overline{r}, \cdots , \overline{r} +\Theta\}.
\end{equation}
Then by using \eqref{e118} and \eqref{e7}, we have for every  $j \in \{\overline{r}, \cdots , \overline{r} +\Theta\}$ that
\begin{equation}
\label{e121}
\begin{split}
\frac{ \lambda_{\ell-1}\sum^{\ell -1}_{i=1} (g^{k+j}_i )^2}{ \overline{\zeta}\|\mathcal{G}_k\|^2 + \sum^{\ell -1}_{i=1} (g^{k+j}_i )^2} = \frac{ \lambda^2_{\ell-1}\sum^{\ell -1}_{i=1} (g^{k+j}_i )^2}{ \lambda_{\ell-1}(\overline{\zeta}\|\mathcal{G}_k\|^2 + \sum^{\ell -1}_{i=1} (g^{k+j}_i )^2)}&\leq \frac{ \lambda_{\ell-1}\sum^{\ell -1}_{i=1} (g^{k+j}_i )^2\lambda_{i}}{ \lambda_{\ell-1}\overline{\zeta}\|\mathcal{G}_k\|^2 + \sum^{\ell -1}_{i=1} (g^{k+j}_i )^2\lambda_{i}}\\& \leq \alpha^{BB2}_{k+j+1}  \leq \delta_{\sup}.
\end{split}
\end{equation}
From  \eqref{e119}, \eqref{e10}, \eqref{e121}, and the fact that  $\sum^{\ell-1}_{i=1} (g^k_i)^2 \geq  (g^k_{\ell -1})^2 $,  it follows, with a computations similar to those in the proof of Lemma \ref{alem1} which leads to \eqref{e27},   for a chosen $\alpha_{k+j+1} = \alpha^{BB1}_{k+j+1}$ or  $\alpha_{k+j+1} = \alpha^{BB2}_{k+j+1}$  that
\begin{equation}
\label{e11}
\frac{2}{3} \lambda_{\ell -1}  \leq    \alpha_{k+j+1} \leq \delta_{\sup}    \quad \text{ for all }  j \in \{\overline{r}, \cdots , \overline{r} +\Theta\}.
\end{equation}
Moreover, considering separately the cases $\frac{\lambda_{\ell-1}}{\alpha_{k+j+1}} <1$ and  $\frac{\lambda_{\ell-1}}{\alpha_{k+j+1}} \ge 1$, we obtain due to \eqref{e11} that
\begin{equation}
\label{e12}
\abs{ 1- \frac{\lambda_{\ell-1}}{\alpha_{k+j+1}}} \leq c  <  1 \quad \text{ for all }  j \in \{\overline{r}, \cdots , \overline{r} +\Theta\},
\end{equation}
where $c =\max\{\frac{1}{2}, \rho_{\mathcal{A}}\}$. Using  \eqref{e6} and \eqref{e12} we conclude that
\begin{equation}
\label{e122}
| g^{k+j+2}_{\ell -1} | = \abs{ 1- \frac{\lambda_{\ell-1}}{\alpha_{k+j+1}}} | g^{k+j+1}_{\ell -1} | \leq c |g^{k+j+1}_{\ell -1} |  \quad \text{for all }   j \in \{\overline{r}, \cdots , \overline{r} +\Theta\}.
\end{equation}
Finally, by \eqref{e6} and  \eqref{e122} we obtain
\begin{equation*}
\begin{split}
(g^{k+\overline{r}+\Theta+1 }_{\ell -1})^2 \leq c^{2\Theta}(g_{\ell-1}^{k+\overline{r}+1})^2\leq  c^{2\Theta} \gamma_{\mathcal{A}}^{2(\overline{r}+1)}(g^{k}_{\ell -1})^2 \leq  c^{2\Theta}\gamma_{\mathcal{A}}^{2(\overline{r}+1)}\| \mathcal{G}_k\|^2\leq 2\overline{\zeta}\|\mathcal{G}_{k}\|^2,
\end{split}
\end{equation*}
and the proof is complete.
\qed \end{proof}
Next since we have
\begin{equation*}
G(k+j ,\ell-1) = G(k+j,\ell)+(g^{k+j}_{\ell-1})^2   \quad \text{ for every } k,j \geq1,
\end{equation*}
 with a similar argument as in the proof of Lemma \ref{lem4}, the following lemma can be proven.
\begin{lemma}
\label{clem4}
Let $\delta_{\sup} \geq 2\delta_{\inf}$. Moreover,  assume that for any  integer $\ell$ with  $1 < \ell  \leq  n^{u}$,  and $k\geq 1$, there exist positive numbers  $r_{\ell}$  and $\zeta_{\ell}$ such that \eqref{e118} holds for $\overline{r} = r_{\ell}$ and $\overline{\zeta}=\zeta_{\ell}$. Then for the choice of $\zeta_{\ell-1}:=(1+2\gamma_{\mathcal{A}}^4)\zeta_{\ell}$ and $r_{\ell-1}:= r_{\ell}+\Theta_{\ell}+1$,
we have
\begin{equation*}
G(k+j,\ell -1) \leq \zeta_{\ell-1} \| \mathcal{G}_k\|^2   \quad \text{ for all } j\geq r_{\ell -1}.
\end{equation*}
\end{lemma}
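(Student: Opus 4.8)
The plan is to follow the template of the proof of Lemma~\ref{lem4}, now running the induction with the eigenvalue index \emph{decreasing} ($\ell\to\ell-1$): the quantity $g^{k}_{\ell+1}$ of Lemma~\ref{lem4} is replaced by the eigenspace contribution $g^{k}_{\ell-1}$, and Lemma~\ref{clem1} takes the place of Lemma~\ref{alem1}. Since $G(k+j,\ell-1)=G(k+j,\ell)+(g^{k+j}_{\ell-1})^2$ and, by the hypothesis \eqref{e118} together with $r_{\ell-1}=r_\ell+\Theta_\ell+1\ge r_\ell$, we already have $G(k+j,\ell)\le\zeta_\ell\|\mathcal{G}_k\|^2$ for all $j\ge r_{\ell-1}$, it suffices to prove
\begin{equation*}
(g^{k+j}_{\ell-1})^2 \le 2\gamma_{\mathcal{A}}^4\zeta_\ell\|\mathcal{G}_k\|^2 \qquad\text{for all } j\ge r_{\ell-1},
\end{equation*}
since adding the two bounds and using $\zeta_{\ell-1}=(1+2\gamma_{\mathcal{A}}^4)\zeta_\ell$ gives exactly the claim.

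To establish this, first apply Lemma~\ref{clem1} with $\overline{\zeta}=\zeta_\ell$ and $\overline{r}=r_\ell$ to obtain an index $j_1\in\{r_\ell,\dots,r_\ell+\Theta_\ell+1\}$ with $(g^{k+j_1}_{\ell-1})^2\le 2\zeta_\ell\|\mathcal{G}_k\|^2$. Then I would introduce the same shifting-index bookkeeping as in the proof of Lemma~\ref{lem4}: set $j_s=j_1$; if $(g^{k+j}_{\ell-1})^2\le 2\zeta_\ell\|\mathcal{G}_k\|^2$ for every $j\ge j_1$ we are done because $\gamma_{\mathcal{A}}\ge1$; otherwise let $j_2\ge j_1$ be the last index such that the bound holds on $\{j_1,\dots,j_2\}$ and fails at $j_2+1$, and let $j_3\ge j_2+2$ be the first index after $j_2$ for which $(g^{k+j_3}_{\ell-1})^2\le 2\zeta_\ell\|\mathcal{G}_k\|^2$ again; existence of $j_3$ is guaranteed by Lemma~\ref{clem1} applied with $\overline{r}=j_2$. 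On the run $j_2\le j\le j_3-2$ the quantity $(g^{k+j+1}_{\ell-1})^2$ exceeds $2\zeta_\ell\|\mathcal{G}_k\|^2$, so repeating verbatim the estimates \eqref{e7}, \eqref{e10}, \eqref{e11} that led, in the proof of Lemma~\ref{clem1}, from \eqref{e119} to \eqref{e11} gives $\tfrac23\lambda_{\ell-1}\le\alpha_{k+j+2}\le\delta_{\sup}$, and hence, exactly as in \eqref{e12} and \eqref{e122}, $(g^{k+j+3}_{\ell-1})^2\le c^2(g^{k+j+2}_{\ell-1})^2$ with $c=\max\{\rho_{\mathcal{A}},\tfrac12\}<1$, for every $j_2\le j\le j_3-2$.

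It then remains to absorb the two transition steps near $j_2$. From $\bigl|(\alpha_k-\lambda_i)/\alpha_k\bigr|\le\gamma_{\mathcal{A}}$ and \eqref{e6} we get $(g^{k+j_2+r}_{\ell-1})^2\le\gamma_{\mathcal{A}}^{2r}(g^{k+j_2}_{\ell-1})^2\le 2\gamma_{\mathcal{A}}^{2r}\zeta_\ell\|\mathcal{G}_k\|^2$ for $r=1,2$; combining this with the contraction $c<1$ just obtained shows $(g^{k+j}_{\ell-1})^2\le 2\gamma_{\mathcal{A}}^4\zeta_\ell\|\mathcal{G}_k\|^2$ for all $j_2+1\le j\le j_3+1$, while for $j_1\le j\le j_2$ the bound $(g^{k+j}_{\ell-1})^2\le 2\zeta_\ell\|\mathcal{G}_k\|^2\le 2\gamma_{\mathcal{A}}^4\zeta_\ell\|\mathcal{G}_k\|^2$ is immediate. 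Setting $j_s=j_3$ and iterating the argument covers every $j\ge j_1$, and since $j_1\le r_\ell+\Theta_\ell+1=r_{\ell-1}$ this yields the displayed inequality and finishes the proof.

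The main obstacle I anticipate is organisational rather than conceptual: one has to check that the $\alpha$-bounds derived in Lemma~\ref{clem1} for a fixed window $\{\overline r,\dots,\overline r+\Theta\}$ remain valid on the dynamically defined run $\{j_2,\dots,j_3-2\}$, that Lemma~\ref{clem1} can legitimately be reinvoked at each restart, and that the finitely many transition steps are always swallowed by the extra factor $\gamma_{\mathcal{A}}^4$. This goes through because the derivation of those bounds uses only $(g^{k+j}_{\ell-1})^2>2\zeta_\ell\|\mathcal{G}_k\|^2$ and \eqref{e118}, both of which hold throughout the run, and because $\delta_{\sup}\ge 2\delta_{\inf}$ ensures $\gamma_{\mathcal{A}}\ge1$, so the transition estimates are not lost when passing to the bound with $\gamma_{\mathcal{A}}^4$.
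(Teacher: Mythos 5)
Your proposal is correct and follows essentially the same route as the paper: the authors themselves only remark that, given the decomposition $G(k+j,\ell-1)=G(k+j,\ell)+(g^{k+j}_{\ell-1})^2$, the lemma follows ``with a similar argument as in the proof of Lemma \ref{lem4}'', which is precisely the reduction to $(g^{k+j}_{\ell-1})^2\le 2\gamma_{\mathcal{A}}^4\zeta_\ell\|\mathcal{G}_k\|^2$, the invocation of Lemma \ref{clem1} in place of Lemma \ref{alem1}, and the shifting-index/run bookkeeping you carry out. Your write-up simply makes explicit the details the paper leaves implicit, and it does so correctly.
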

Finally by using Lemma \ref{lem1} and \ref{clem4}, we can prove Lemma \ref{R-1} for the special case in which $\mathcal{A}$ is a compact perturbation of the identity.   For the case  of $\delta_{sup} < 2\delta_{inf}$ the proof is similar to the proof of Lemma \ref{R-1}. Here we give some hints about the other case, namely,  $\delta_{sup}\geq 2\delta_{inf}$.  First, due to Lemma \ref{lem1}, for $\zeta_{n^{u}}:=\frac{1}{4}(1+4\gamma^2_{\mathcal{A}})^{1-n^{u}}$ there exists an integer $r_{n^{u}}>0$ such that
\begin{equation*}
G(k,n^u) \leq \zeta_{n^u}\|\mathcal{G}_k\|^2 \quad \text{ for all } k\geq r_{n^u},
\end{equation*}
where $n^{u}=n^{u}(\delta_{\inf},\delta_{\sup})>0$  is defined in Lemma \ref{lem1}. Then, similarly to the  proof of  Lemma \ref{R-1}, by induction and using  Lemma \ref{clem4},  we have for  $\ell$ with $2 \leq \ell \leq n^{u}$ that
\begin{equation*}
\zeta_{\ell-1}=(1+2\gamma_{\mathcal{A}}^4)\zeta_{\ell} = \frac{1}{4}(1+2\gamma^4_{\mathcal{A}})^{1-(\ell-1)}         ,  \quad  \text{ and }  \quad  r_{\ell-1}= r_{\ell}+\Theta_{\ell}+1.
\end{equation*}
Therefore,  due to \eqref{e123}, \eqref{e55} holds for $m: = r_1$.

Now we can conclude that Theorems \ref{Theo4} and \ref{RConvergence} hold for \eqref{poco}.

\begin{remark}
\label{Remark6}
 The explanations in Remark \ref{Remark5} also apply for operators $\mathcal{A}$ of the form \eqref{poco}. Due to \eqref{e6}, the case $\kappa(\mathcal{A}) < 2$  is analogous to that in Remark \ref{Remark5}. For the  case $\kappa(\mathcal{A}) \geq 2$, we can gain more information since the spectrum of $\sigma(\mathcal{A})$ is discrete. Due to the right equality in \eqref{e6} we have
 \begin{equation}
\label{e180}
|g^{k+1}_{i}| = \abs{ \frac{\alpha_k-\lambda_i}{\alpha_k}}|g^{k}_{i}|.
\end{equation}
Therefore, for $i=0,1,\dots$ we obtain  $g^{k+1}_{i}= 0$ if $\alpha_k = \lambda_i$. Moreover, due to Lemma \ref{lem1}, there exists an index $n^{u}\geq1$ such that the sequences $ \{|g^{k}_{i}|\}_k$  with $i \in \{0\}\cup \{i: i\geq n^{u}\}$  are Q-linearly convergent with factor $\rho_{\mathcal{A}}<1$. Therefore, we need only to consider the values of $ |g^{k}_{i}|$ with  $i=1,\dots,n^{u}-1$.  From \eqref{e180},  for any  $\lambda_i$ close  to $\alpha_k $ we have a significant reduction and  $ |g^{k+1}_{i}| \ll  |g^{k}_{i}|$, while  for  $\lambda_i > 2\alpha_k$,  we obtain  $|g^{k+1}_{i}| > |g^{k}_{i}|$. These facts clarify the potential nonmonotonic behaviour in the sequence $ \{\| \mathcal{G}_k\| \}_k$.   In fact, for  $\alpha_k$ close to $\lambda_1$, the  coefficients $g_i$
decrease in modulus, but the changes in $g_i$  with $i \in \{0\}\cup \{i: i\geq n^{u}\}$ are negligible provided that  $\kappa(\mathcal{A})$  is large.  Furthermore,  small values of $\alpha_k$, tend to diminish the components $|g^k_i|$ for small $i\neq 0$ and thus, enhance the relative contribution of components for large $i$.
\end{remark}

\subsection{General Objective Function}\label{sec2.2}
 In this section,  we will prove the local  $R$-linear convergence of Algorithm \ref{BBa}, in the case that this algorithm is applied for finding a local minimum $u^* \in \mathcal{H}$ of a not necessarily quadratic function $\mathcal{F}: \mathcal{H} \to \mathbb{R}$.  More precisely, $\mathcal{F}$  is twice continuously Fr\'echet-differentiable at $u^*$ with Lipschitz continuous second derivative $\mathcal{F}''  $  in a neighbourhood of  $u^* \in \mathcal{H}$. Then if we identify the first derivative $\mathcal{F}'$ by its corresponding representation $\mathcal{G}$, we have the following first-order optimality condition
\begin{equation}
 \label{e39}
 \tag{EP}
  \mathcal{G} (u^*) = 0 \quad \text{ in } \mathcal{H}.
 \end{equation}
 Due to the continuity of the bilinear map  $\mathcal{F}''(u^*)$, the exists a positive constant  $\delta_{\sup}$  such that
\begin{equation}
\label{e130cont}
\mathcal{F}''(u^*)(v,u) \leq \delta_{\sup} \|v\|\|u\|   \quad   \text{ for all }   u, v \in \mathcal{H}.
\end{equation}
 Moreover, we assume that the continuous bilinear map  $\mathcal{F}''(u^*)$ is uniformly positive, that is
\begin{equation}
\label{e130}
\delta_{\inf} \|v\|^2 \leq \mathcal{F}''(u^*)(v,v) \leq \delta_{\sup} \|v\|^2   \quad   \text{ for all }  v \in \mathcal{H},
 \end{equation}
where $\delta_{\sup} \geq \delta_{\inf}>0$.  Then, due to the Riesz representation theorem, there exists a unique self-adjoint bounded operator  $\mathcal{A}^{\mathcal{F}}_{u^*}$ (see \cite{MR1070713}[Theorem 2.2, page 31] ) such that
\begin{equation*}
\mathcal{F}''(u^*)(v,u)= (\mathcal{A}^{\mathcal{F}}_{u^*}v,u) \quad  \text{for all } v,u \in \mathcal{H}.
\end{equation*}
  Similarly to the analysis of \cite{MR2241317,MR1865673}, the $R$-linearly convergence result is proven by comparing the sequences $\{u_k\}_k$ and $\{\hat{u}_k\}_k$ which are generated by Algorithm \ref{BBa} applied to,  respectively, $\mathcal{F}$ and its second-order Taylor approximation $\hat{\mathcal{F}}$ defined by
\begin{equation}
\label{quadmodel}
\hat{\mathcal{F}}(u) = \mathcal{F}(u^*) +\frac{1}{2}(\mathcal{A}^{\mathcal{F}}_{u^*}(u-u^*),u-u^*).
\end{equation}

Throughout this section,  all notations with the accent  ``  \string^ ''   are related to the quadratic approximation \eqref{quadmodel}. For instance with $\hat{\mathcal{G}}(\cdot)$ and $\hat{\alpha}_k$, we denote the gradient and the step-sizes of Algorithm \ref{BBa} applied to $\hat{\mathcal{F}}$, respectively.

Since $\mathcal{F}'' : \mathcal{H} \to \mathcal{L}(\mathcal{H},\mathcal{L}(\mathcal{H},\mathbb{R}))$ is locally Lipschitz continuous and $\mathcal{F}''(u^*): \mathcal{H}\times \mathcal{H} \to \mathbb{R}$ is continuous and uniformly positive, there exist a ball $\mathcal{B}_{\tau}(u^*)$  centered at $u^*$ with a radius $\tau$,   positive constants $\alpha_{\inf}$, $\alpha_{\sup}$ depending on $\tau$,  and $L$ such that
\begin{equation}
\tag{$L1$}
\label{e45}
\|\mathcal{G}(u) - \mathcal{A}^{\mathcal{F}}_{u^*}(u-u^*)\| \leq L\|u-u^*\|^2   \quad \text{ for all } u\in \mathcal{B}_{\tau}(u^*),
\end{equation}
 and
\begin{equation}
\tag{$L2$}
\label{e46}
\alpha_{\inf} \|v\|^2 \leq \mathcal{F}''(u)(v,v) \leq  \alpha_{\sup} \|v\|^2   \quad \text{ for all } v \in \mathcal{H} \text{ and }  u\in \mathcal{B}_{\tau}(u^*).
\end{equation}
Moreover, due to the mean value theorem we have
\begin{equation}
\label{e80}
\alpha_{\inf} \leq \alpha^{BB1}_k, \alpha^{BB2}_k \leq  \alpha_{\sup},
\end{equation}
provided that $u_k$ and $u_{k-1}$ belong to $\mathcal{B}_{\tau}(u^*)$.  Moreover if the iterations of Algorithm \ref{BBa} applied to $\hat{\mathcal{F}}$ lie in $\mathcal{B}_{\tau}(u^*)$, we will also have
\begin{equation}
\label{e81}
\alpha_{\inf} \leq  \delta_{\inf}  \leq  \hat{\alpha}^{BB1}_k, \hat{\alpha}^{BB2}_k \leq \delta_{\sup} \leq  \alpha_{\sup}.
\end{equation}
Further, by the  fundamental theorem of calculus, we infer that
\begin{equation}
\label{e60}
\alpha_{\inf}\|u-u^*\| \leq \|\mathcal{G}(u)\| = \| \mathcal{G}(u)-\mathcal{G}(u^*)\| \leq \alpha_{\sup}\|u-u^*\|  \quad   \text{ for all } u \in \mathcal{B}_{\tau}(u^*).
\end{equation}
%The right inequality in \eqref{e60} follows from \eqref{e130cont} and the fact that  $\alpha_{\sup} \geq \delta_{\sup}$. Now we will justify the inequality in the left hand side of the expression \eqref{e60}. Without loss of generality we assume that  $u \neq u^*$.  Since $u \in \mathcal{B}_{\tau}(u^*)$,  by defining $s(t): = u^*+t(u-u^*)$, we have  $s(t) \in  \mathcal{B}_{\tau}(u^*)$  for  every $ t \in [0,1]$.  Therefore, due to \eqref{e46}, we obtain
%\begin{equation}
%\begin{split}
%\alpha_{\inf}\|u-u^*\|^2 \leq &\int^1_0 \mathcal{F}''(s(t))(u-u^*,u-u^*)dt = (\mathcal{G}(u)-\mathcal{G}(u^*),u-u^*) \\
%&\leq \|\mathcal{G}(u)-\mathcal{G}(u^*)\| \|u-u^*\|,
%\end{split}
%\end{equation}
%and  the verification of \eqref{e60} is complete.

In the next lemma we study the distance of the sequences  $\{u_k \}_k$ and $\{\hat{u}_k\}_k$.
\begin{lemma}
\label{lem5}
Let $u^*$ be a local minimizer of  $\mathcal{F}$ with $\mathcal{F}\in C^2(\mathcal{H},\mathbb{R})$ and assume that \ref{e45} and \ref{e46} hold for a radius $\tau$ and constants $\alpha_{\inf}$ and  $\alpha_{\sup}$, and for the bilinear form $ \mathcal{F}''(u^*)$ estimate \eqref{e130} holds with the constants $\delta_{\sup}$ and  $\delta_{\inf}$.  Further, let $\{ u_j\}_j$ be a sequence generated by Algorithm \ref{BBa} applied to $\mathcal{F}$,  and  $\{ \hat{u}^k_j\}_j$ be the sequence generated by Algorithm \ref{BBa} applied to the quadratic approximation \eqref{quadmodel} of $\mathcal{F}$ at $u^*$ with an initial iterate $u_k$ and an initial step-size $\alpha_k$ with $k\geq 1$. Then for any fixed positive integer $m$, there exist positive constants $\eta \leq \tau$ and $\lambda$ such that the following property holds:

If $u_k \in \mathcal{B}_{\eta}(u^*)$, $\alpha_k \in [\alpha_{\inf},\alpha_{\sup}] $,   and if for some  $\ell \in \{0, \dots, m\}$,  the following condition holds
\begin{equation}
\label{e67}
\|\hat{u}^k_j-u^*\| \geq \frac{1}{2} \| u_k-u^*\|  \quad \text{ for all  }  j \in \{0, \dots, \max\{0,\ell-1\}\},
\end{equation}
then we have
\begin{equation}
\label{e62}
u_{k+j} \in \mathcal{B}_{\tau}(u^*)  \quad \text{ and } \quad \|u_{k+j}-\hat{u}^k_j\| \leq \lambda \|u_k-u^*\|^2
\end{equation}
for all $j \in \{ 0,\dots,\ell\}$.
\end{lemma}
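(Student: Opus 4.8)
The argument is by induction on $j \in \{0,\dots,\ell\}$, establishing both conclusions of \eqref{e62} simultaneously. The base case $j=0$ is immediate: $u_{k+0}=u_k\in\mathcal{B}_\eta(u^*)\subseteq\mathcal{B}_\tau(u^*)$ provided $\eta\le\tau$, and $\hat{u}^k_0=u_k$ by definition, so $\|u_k-\hat{u}^k_0\|=0\le\lambda\|u_k-u^*\|^2$. For the inductive step, suppose \eqref{e62} holds for all indices up to some $j<\ell$; I want to derive it for $j+1$. The plan is to bound $\|u_{k+j+1}-\hat{u}^k_{j+1}\|$ by comparing the two BB-iterations term by term: $u_{k+j+1}=u_{k+j}-\tfrac{1}{\alpha_{k+j}}\mathcal{G}(u_{k+j})$ versus $\hat{u}^k_{j+1}=\hat{u}^k_j-\tfrac{1}{\hat\alpha_j}\hat{\mathcal{G}}(\hat{u}^k_j)$, where $\hat{\mathcal{G}}(u)=\mathcal{A}^{\mathcal{F}}_{u^*}(u-u^*)$. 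Writing the difference and inserting $\pm\tfrac{1}{\alpha_{k+j}}\hat{\mathcal{G}}(u_{k+j})$ and $\pm\tfrac{1}{\hat\alpha_j}\hat{\mathcal{G}}(u_{k+j})$, I split it into three contributions: (i) $\|u_{k+j}-\hat{u}^k_j\|$ times an operator norm, controlled by the induction hypothesis; (ii) $\tfrac{1}{\alpha_{k+j}}\|\mathcal{G}(u_{k+j})-\hat{\mathcal{G}}(u_{k+j})\|$, which by \ref{e45} is $\le \tfrac{L}{\alpha_{\inf}}\|u_{k+j}-u^*\|^2$; and (iii) a step-size discrepancy term $|\tfrac{1}{\alpha_{k+j}}-\tfrac{1}{\hat\alpha_j}|\,\|\hat{\mathcal{G}}(u_{k+j})\|$.

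The heart of the estimate is controlling term (iii), i.e. bounding $|\alpha_{k+j}-\hat\alpha_j|$. Here one uses the explicit BB formulas: $\alpha^{BB1}_{k+j}$ is a Rayleigh-type quotient built from $\mathcal{S}_{k+j-1},\mathcal{Y}_{k+j-1}$ for the nonlinear problem, while $\hat\alpha^{BB1}_j$ is the corresponding quotient for the linearized problem. Since $\mathcal{Y}_{k+j-1}=\mathcal{G}(u_{k+j})-\mathcal{G}(u_{k+j-1})$ and the linearized analogue is $\mathcal{A}^{\mathcal{F}}_{u^*}\mathcal{S}_{k+j-1}$, the difference of numerators and denominators is again governed by \ref{e45}, giving a bound of the form $|\alpha_{k+j}-\hat\alpha_j|\le C(\|u_{k+j}-u^*\|+\|u_{k+j-1}-u^*\|)$, after using that the denominators are bounded below — this is where \ref{e46}, \eqref{e80}, \eqref{e81} and the lower bound $\|\mathcal{S}\|\ge$ something positive (from $\alpha\le\alpha_{\sup}$ and $\|\mathcal{G}\|\ge\alpha_{\inf}\|u-u^*\|$ in \eqref{e60}) enter. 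Combined with $\|\hat{\mathcal{G}}(u_{k+j})\|\le\delta_{\sup}\|u_{k+j}-u^*\|$, term (iii) becomes quadratic in the relevant distances as well.

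Thus each of (i)–(iii) is either $\le(1+\text{const})\|u_{k+j}-\hat{u}^k_j\|$-type or $O(\|u_{k+r}-u^*\|^2)$ for $r\le j$. The remaining ingredient is to convert $\|u_{k+r}-u^*\|$ into $\|u_k-u^*\|$: by the induction hypothesis $\|u_{k+r}-u^*\|\le\|u_{k+r}-\hat{u}^k_r\|+\|\hat{u}^k_r-u^*\|\le\lambda\|u_k-u^*\|^2+\|\hat{u}^k_r-u^*\|$, and since $\hat{\mathcal{F}}$ is quadratic with spectrum in $[\delta_{\inf},\delta_{\sup}]$, estimate \eqref{e116} (or rather its per-step version $\|\hat{u}^k_{r+1}-u^*\|\le\gamma_{\mathcal{A}}\|\hat{u}^k_r-u^*\|$) gives $\|\hat{u}^k_r-u^*\|\le\gamma_{\mathcal{A}}^r\|u_k-u^*\|$, hence $\|u_{k+r}-u^*\|\le(\gamma_{\mathcal{A}}^r+\lambda\eta)\|u_k-u^*\|$. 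Because $m$ is fixed, $\gamma_{\mathcal{A}}^r$ stays bounded by $\max\{1,\gamma_{\mathcal{A}}^m\}$ for $r\le m$. Feeding all of this back, the recursion for $d_j:=\|u_{k+j}-\hat{u}^k_j\|$ has the form $d_{j+1}\le a\,d_j + b\|u_k-u^*\|^2$ with $a,b$ depending only on $m$ and the fixed constants; unrolling over at most $m$ steps yields $d_{j+1}\le\lambda\|u_k-u^*\|^2$ for a suitable $\lambda=\lambda(m)$, and then choosing $\eta$ small enough that $\lambda\eta\cdot\|u_k-u^*\|\le\tau-\gamma_{\mathcal{A}}^m\eta$ (roughly) guarantees $u_{k+j+1}\in\mathcal{B}_\tau(u^*)$, closing the induction. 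Note where hypothesis \eqref{e67} is used: it ensures $\|\hat{u}^k_j-u^*\|$ does not collapse below $\tfrac12\|u_k-u^*\|$ prematurely, which is what makes the denominators in the BB quotients (both nonlinear and linearized) bounded below by a multiple of $\|u_k-u^*\|$ rather than degenerating — without it, the step-size comparison in term (iii) could blow up. The main obstacle is precisely the careful bookkeeping of the step-size difference $|\alpha_{k+j}-\hat\alpha_j|$ and ensuring all lower bounds on denominators remain uniform; the rest is a standard Gronwall-type unrolling over a fixed finite horizon.
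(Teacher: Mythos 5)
Your proposal is correct and follows essentially the same route as the paper's proof in Appendix A.1: induction over the finite horizon, a triangle-inequality decomposition of $u_{k+j+1}-\hat{u}^k_{j+1}$ isolating the linearization error (controlled by \ref{e45}) and the step-size discrepancy $\bigl|\tfrac{1}{\alpha_{k+j}}-\tfrac{1}{\hat\alpha_j}\bigr|$, with hypothesis \eqref{e67} used exactly where you place it, namely to keep the BB denominators bounded below by a multiple of $\|u_k-u^*\|^2$ so the quotient comparison does not degenerate. The only cosmetic differences are your regrouping of the three terms and your use of the per-step bound $\|\hat{u}^k_{r+1}-u^*\|\le\gamma_{\mathcal{A}}\|\hat{u}^k_r-u^*\|$ where the paper uses the growth factor $1+\alpha_{\sup}/\alpha_{\inf}$ on the nonlinear iterates.
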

\begin{proof}
The proof is given in Appendix \ref{Ap1}.
\qed \end{proof}

In the next theorem, we present the main result of this section which is the local $R$-linearly convergence of Algorithm \ref{BBa} applied to twice continuously Fr\'echet differentiable objective functions.
 \begin{theorem}
\label{Theo3}
Let $u^*$ be a local minimizer of a twice continuously Fr\'echet differentiable function $\mathcal{F}$, with a locally Lipschitz continuous second-derivative. Further suppose that the bilinear mapping $\mathcal{F}''(u^*)$ satisfies estimate \eqref{e130} for constants $\delta_{\sup}$ and  $\delta_{\inf}$.  Then there exist positive constants $\zeta$, $\lambda_1$, $\lambda_2$, and $\theta<1$ such that the sequence  $\{u_k\}_k$, generated by Algorithm \ref{BBa}, satisfies
\begin{equation}
\label{e58}
\|u_k-u^*\| \leq \lambda_1  \theta^k \| u_1- u^*\|   \quad  \text{ for all  } k \geq 1,
\end{equation}
and
\begin{equation}
\label{e59}
\|\mathcal{G}_k\| \leq \lambda_2  \theta^k \| \mathcal{G}_1\| \quad  \text{ for all  } k \geq 1,
\end{equation}
for all initial iterates $u_0, u_1 \in \mathcal{B}_{\zeta}(u^*) \in \mathcal{H}$ with $u_0 \neq u_1$.
\end{theorem}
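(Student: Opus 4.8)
\textbf{Proof strategy for Theorem~\ref{Theo3}.}

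The plan is to run the classical shadowing argument (in the spirit of \cite{MR2241317,MR1865673}) that compares $\{u_k\}_k$ with the Barzilai--Borwein iterates of the quadratic model $\hat{\mathcal F}$ from \eqref{quadmodel}: near $u^*$ the true iterates stay within $O(\|u_k-u^*\|^2)$ of the model iterates by Lemma~\ref{lem5}, while the model iterates decay $R$-linearly by Theorem~\ref{RConvergence}/Lemma~\ref{R-1}, so after a fixed number $m$ of steps the quadratic error has essentially halved and the nonlinear perturbation is negligible. Concretely, I would fix $m=m(\delta_{\inf},\delta_{\sup})$ as furnished by Lemma~\ref{R-1} for the strictly convex quadratic problem \eqref{quadmodel} with Hessian $\mathcal{A}^{\mathcal{F}}_{u^*}$, so that any run of Algorithm~\ref{BBa} on $\hat{\mathcal F}$ satisfies $\|\hat u_{m}-u^*\|\le\frac12\|\hat u_{0}-u^*\|$; then let $\eta\le\tau$ and $\lambda$ be the constants Lemma~\ref{lem5} associates with this $m$, and shrink $\eta$ further so that $\lambda\eta\le\frac14$.

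Next I would prove an auxiliary block estimate: there exist $c_0\ge1$ and $\theta=\frac34<1$ such that, whenever $\|u_k-u^*\|\le\eta$ and $\alpha_k\in[\alpha_{\inf},\alpha_{\sup}]$, there is an integer $p=p(k)\in\{1,\dots,m\}$ with $u_{k+j}\in\mathcal B_\tau(u^*)$ and $\|u_{k+j}-u^*\|\le c_0\|u_k-u^*\|$ for $0\le j\le p$, with $\|u_{k+p}-u^*\|\le\theta\|u_k-u^*\|$, and with $\alpha_{k+p}\in[\alpha_{\inf},\alpha_{\sup}]$. To see this, run Algorithm~\ref{BBa} on $\hat{\mathcal F}$ from $u_k$ with initial step $\alpha_k$, obtaining $\{\hat u^k_j\}_j$ with $\hat u^k_0=u_k$, and split into two cases. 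If $\|\hat u^k_j-u^*\|\ge\frac12\|u_k-u^*\|$ for all $j\in\{0,\dots,m-1\}$, take $p=m$: Lemma~\ref{lem5} with $\ell=m$ gives $u_{k+j}\in\mathcal B_\tau(u^*)$ and $\|u_{k+j}-\hat u^k_j\|\le\lambda\|u_k-u^*\|^2$ for $0\le j\le m$, the choice of $m$ gives $\|\hat u^k_m-u^*\|\le\frac12\|u_k-u^*\|$, and since $\hat\alpha_i$ and $\sigma(\mathcal{A}^{\mathcal{F}}_{u^*})$ lie in $[\delta_{\inf},\delta_{\sup}]$ (cf.\ \eqref{e81}, \eqref{e115}) one has $\|\hat u^k_j-u^*\|\le\gamma_{\mathcal{A}}^{\,j}\|u_k-u^*\|\le\bar c\,\|u_k-u^*\|$ for $j\le m$ with $\bar c:=\max\{1,\gamma_{\mathcal{A}}\}^{m}$; combining these yields the claim with $c_0:=\bar c+\lambda\tau$ and $\theta=\frac12+\lambda\eta\le\frac34$. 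Otherwise let $\ell_0\in\{1,\dots,m-1\}$ be the first index with $\|\hat u^k_{\ell_0}-u^*\|<\frac12\|u_k-u^*\|$ (note $\ell_0\ge1$ since $\hat u^k_0=u_k$); then \eqref{e67} holds with $\ell=\ell_0$, Lemma~\ref{lem5} applies with $p=\ell_0$, the intermediate bounds for $j\le\ell_0$ follow exactly as above, and $\|u_{k+\ell_0}-u^*\|\le\|\hat u^k_{\ell_0}-u^*\|+\lambda\|u_k-u^*\|^2<(\frac12+\lambda\eta)\|u_k-u^*\|\le\theta\|u_k-u^*\|$. In both cases $u_{k+p},u_{k+p-1}\in\mathcal B_\tau(u^*)$, so \eqref{e80} yields $\alpha_{k+p}\in[\alpha_{\inf},\alpha_{\sup}]$.

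To conclude, I would pick $\zeta\le\min\{\tau,\eta/c_0\}$ and take $u_0,u_1\in\mathcal B_\zeta(u^*)$ with $u_0\ne u_1$; then $u_0,u_1\in\mathcal B_\tau(u^*)$, so $\alpha_1\in[\alpha_{\inf},\alpha_{\sup}]$ by \eqref{e80}. Setting $k_0=1$ and $k_{i+1}=k_i+p(k_i)$, an induction on $i$ using the block estimate gives $\|u_{k_i}-u^*\|\le\theta^i\|u_1-u^*\|\le\zeta\le\eta$ and $\alpha_{k_i}\in[\alpha_{\inf},\alpha_{\sup}]$ for all $i$, while any $k$ with $k_i\le k<k_{i+1}$ satisfies $\|u_k-u^*\|\le c_0\theta^i\|u_1-u^*\|$. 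Since $1\le p(k_i)\le m$ one has $k_{i+1}\le 1+(i+1)m$, hence $i>(k-1-m)/m$, and therefore $\|u_k-u^*\|\le\lambda_1\theta_*^{\,k}\|u_1-u^*\|$ with $\theta_*:=\theta^{1/m}<1$ and $\lambda_1:=c_0\theta_*^{-(1+m)}$, which is \eqref{e58}. Finally \eqref{e60}, valid on $\mathcal B_\tau(u^*)$, gives $\|\mathcal G_k\|\le\alpha_{\sup}\|u_k-u^*\|$ and $\|u_1-u^*\|\le\alpha_{\inf}^{-1}\|\mathcal G_1\|$, so \eqref{e59} follows with the same $\theta_*$ and $\lambda_2:=(\alpha_{\sup}/\alpha_{\inf})\,\lambda_1$, completing the proof.

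The main obstacle is Lemma~\ref{lem5} itself — bounding the drift $\|u_{k+j}-\hat u^k_j\|$ by $O(\|u_k-u^*\|^2)$ uniformly over a whole block of $m$ steps while the model step-sizes stay bounded away from $0$ and $\infty$ — which is where local Lipschitz continuity of $\mathcal F''$ and the uniform estimates \ref{e45} and \ref{e46} enter, and which is deferred to Appendix~\ref{Ap1}. Granting that lemma, the only delicate point in the present argument is the dichotomy on whether the model error has already dropped below half within $m$ steps, which forces a variable block length $p\le m$ rather than a fixed stride $m$ (the sequence $\{\|\mathcal G_k\|\}_k$ being only nonmonotone, a genuine one-step contraction is not available).
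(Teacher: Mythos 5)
Your proposal is correct and follows essentially the same route as the paper: Lemma \ref{R-1} applied to the quadratic model \eqref{quadmodel} supplies the $m$-step halving, Lemma \ref{lem5} controls the drift $\|u_{k+j}-\hat u^k_j\|=O(\|u_k-u^*\|^2)$ over each block, and the subsequence $k_{i+1}=k_i+p(k_i)$ with contraction factor $\tfrac12+\lambda\eta<1$ is exactly the paper's construction (your explicit dichotomy on whether the model error drops below half before step $m$ is the paper's choice of the \emph{smallest} $j_i\le m$ achieving \eqref{e49}). The only cosmetic difference is that you bound the intermediate iterates via the model iterates plus drift, whereas the paper uses the direct step-size estimate \eqref{e92}; both yield the same conclusion.
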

\begin{proof}
The assumptions on  $\mathcal{F}$ imply that \ref{e45} and \ref{e46} are satisfied for a radius $\tau$ and constants $\alpha_{\inf}$ and  $\alpha_{\sup}$.
The proof relies on  Lemma \ref{R-1} and Lemma \ref{lem5} in an essential manner. By Lemma \ref{R-1},  which we use for the sequences $\{\hat u^k_j\}_j$, for every  initial iterate $\hat{u}^k_0 := u_k$ and initial step-size $\hat{\alpha}^k_0 :=\alpha_k $ with
\begin{equation}
\label{e47}
\alpha_{\inf} \leq \alpha_k=\hat{\alpha}^k_0  \leq \alpha_{\sup},
\end{equation}
 we have
 \begin{equation}
\label{e49}
\| \hat{u}^k_m-u^*\| \leq \frac{1}{2} \| u_k -u^*\|.
\end{equation}
Given the constants $\eta\le \tau $ and $\lambda$ from Lemma \ref{lem5} we define  $\zeta := \min\{\eta, \tau_1\}$, where $\tau_1$ is chosen such that $c_2:=\frac{1}{2}+\lambda \tau_1 <1$. Then, due to Lemma \ref{lem5},  for the fixed integer $m$,  if $u_k\in \mathcal{B}_{\zeta}(u^*)$, if  $\alpha_k$ satisfies  \eqref{e47}, and if
\begin{equation}
\label{e53}
\|\hat{u}^k_j-u^*\| \geq \frac{1}{2} \| u_k-u^*\|  \quad \text{ for all  }  j \in \{0, \dots, \max \{0,\ell-1\}\} \mbox{ with } \ell\leq m,
\end{equation}
then we have
\begin{equation}
\label{e50}
u_{k+j} \in \mathcal{B}_{\tau}(u^*) \text{ and } \|u_{k+j}-\hat{u}^k_j\| \leq \lambda \|u_k-u^*\|^2 \quad \text{ for all } j\in \{ 1, \cdots, \ell\}.
\end{equation}

Next we show by induction that there exists a subsequence of indices $\{k_i\}_i$ with  $k_1=1$, for which we have
\begin{equation}
\label{e52}
k_{i+1}-k_{i} \leq m \quad \text{ and }  \quad \| u_{k_{i+1}}- u^* \| \leq c_2 \|u_{k_{i}}-u^* \|,
\end{equation}
for all  $i=1,2,\dots$.

For any $u_0,u_1 \in \mathcal{B}_{\zeta}(u^*)  \subset  \mathcal{B}_{\tau}(u^*)$  and $k_1 = 1$, due to \ref{e46} we obtain
\begin{equation*}
\alpha_{\inf} \leq \alpha_1=\hat{\alpha}^1_0  \leq \alpha_{\sup}.
\end{equation*}
Due to Lemma \ref{R-1}  and  \eqref{e49},  there exists a smallest integer $j_1\leq m$ such that
\begin{equation}
\label{e57}
\|\hat{u}^{k_1}_{j_1}-u^*\| \leq \frac{1}{2}\|\hat{u}^{k_1}_0 - u^* \| = \frac{1}{2} \| u_{k_1} - u^*\| .
\end{equation}
Defining $k_2: = k_1 +j_1 > k_1$, and using \eqref{e50} and \eqref{e57}, we have
\begin{equation}
\label{e51}
\begin{split}
\|u_{k_2}-u^*\| &= \| u_{k_1+j_1}-u^*\| \leq \| u_{k_1+j_1}-\hat{u}_{j_1}^{k_1}\|+\| \hat{u}_{j_1}^{k_1}-u^*\| \\
                        &\leq \lambda\| u_{k_1}-u^* \|^2+\frac{1}{2}\| \hat{u}_{0}^{k_1}-u^*\| \\
                        &\leq \lambda\tau_1\| u_{k_1}-u^* \|+\frac{1}{2}\| u_{k_1}-u^*\| \leq c_2 \| u_{k_1}-u^*\|,
\end{split}
\end{equation}
and hence \eqref{e52} follows for $i=1$. By \eqref{e51} and the fact that $u_{k_1}=u_1 \in \mathcal{B}_{\zeta}(u^*)$, it follows that  $u_{k_2} \in \mathcal{B}_{\zeta}(u^*) \subset \mathcal{B}_{\tau}(u^*)$. Together with the inclusion in \eqref{e50}  we obtain that   $u_{k} \in \mathcal{B}_{\tau}(u^*)$  for all $k \in \{0,1,\dots, k_2\}$.

 To carry out the induction step we assume that for an index $k_i$  we have  $u_{k_i} \in \mathcal{B}_{\zeta}(u^*)$ and,  $u_{k} \in \mathcal{B}_{\tau}(u^*)$  for all $k \in \{0,1,\dots, k_i\}$. We will show that there exists an index $k_{i+1}>k_{i} $ with  $k_{i+1}-k_{i} \leq m$ such that  $u_{k_{i+1}} \in \mathcal{B}_{\zeta}(u^*)$,   $u_{k} \in \mathcal{B}_{\tau}(u^*)$  for all $k \in \{0,1,\dots, k_{i+1}\}$,  and \eqref{e52} holds.

Since $u_{k_i}, u_{k_i-1} \in \mathcal{B}_{\tau}(u^*)$ we have $\alpha_{k_i} \in [\alpha_{\inf},\alpha_{\sup}]$. Moreover,  due to \eqref{e49}, there is an integer $j_i \leq m$  with the property that
\begin{equation*}
\|\hat{u}^{k_i}_{j_i}-u^*\| \leq \frac{1}{2} \|\hat{u}^{k_i}_{0}-u^* \| = \frac{1}{2} \|u_{k_i}-u^* \|.
\end{equation*}
Due to \eqref{e50},  by defining  $ k_{i+1} = k_i+j_i>k_i$ and using the similar argument as in \eqref{e51}, we can show that \eqref{e52} holds and, consequently, we have $u_{k_{i+1}} \in \mathcal{B}_{\zeta}(u^*)$, and  $u_{k} \in \mathcal{B}_{\tau}(u^*)$  for all $k \in \{0,1,\dots, k_{i+1}\}$.

Now,  due to \eqref{e92}, there is a positive constant  $c_1$ such that
\begin{equation}
\label{e54}
\|u_{k+j} - u^*\| \leq c_1\| u_k-u^*\| \quad  \text{ for all } j\in \{1,\dots, m\},
\end{equation}
where $c_1$ depends only on $m$ and the constants $\alpha_{\sup}$ and $\alpha_{\inf}$ which  have been defined in \ref{e46}. Further, for every $k \geq 1$, there exists an integer $i\geq 1$ such that  $ k_i \leq k < k_{i+1}$ with $k \leq k_i+m-1$  and  $k_i \leq m(i-1)+1$. Therefore, $i \geq \frac{k}{m}$ and also by \eqref{e54}, we obtain
\begin{equation*}
\begin{split}
\|u_k-u^*\|  &\leq c_1 \| u_{k_i}-u^*\| \leq c_1 (c_2)^{i-1} \| u_{k_1}-u^*\|  \leq c_1(c_2)^{\frac{k}{m}-1} \| u_{k_1}-u^*\|. 
\end{split}
\end{equation*}
By setting $\theta := (c_2)^{\frac{1}{m}}< 1$, and  $\lambda_1 := \frac{c_1}{c_2}$, we can conclude \eqref{e58}.

We turn to verification of \eqref{e59}. By using the fact that  for every $k \in \mathbb{N}$ the sequence $\{u_k\}_k$ lies in $\mathcal{B}_{\tau}(u^*)$,  the property \eqref{e60}, and \eqref{e58}, we obtain
\begin{equation*}
\begin{split}
\|\mathcal{G}_k\| \leq \alpha_{\sup}\|u_k-u^*\| \leq  \alpha_{\sup}\lambda_1\theta^k \| u_{1}-u^*\| \leq \frac{\alpha_{\sup} \lambda_1}{\alpha_{\inf}} \theta^{k} \|\mathcal{G}_1\|.
\end{split}
\end{equation*}
By setting $\lambda_2:= \frac{\alpha_{\sup} \lambda_1}{\alpha_{\inf}}$ we complete the proof.
\qed \end{proof}
\section{Mesh Independence Principle}
\label{Sec3}
In this section, we investigate finite-dimensional approximations of Algorithm \ref{BBa}. More specifically we investigate the dependence of the iteration count of the algorithm to achieve a desired accuracy of the residue under finite-dimensional approximations. We note that our objective here  is not to estimate the error between the solutions of the discretized problem and continuous one.

 Thus let $\{\mathcal{H}^h\}_h$  be a family  of finite-dimensional Hilbert spaces indexed by some real number $h>0$, and endowed with inner products and their associated norms denoted by $(\cdot,\cdot)_h$ and $\|\cdot\|_h$, respectively.  Let  $\mathcal{G}^h: \mathcal{H}^h \to \mathcal{H}^h$ denote continuous nonlinear mappings which will be required to approximate $\mathcal{G}$ in a sense to be made precise in Assumption A2 below. We then consider the family of problems:
\begin{equation}
\tag{${EP}^h$}
\label{EPN}
\text{ Find } u^{*h} \in \mathcal{H}^h  \text{ such that  } \quad  \mathcal{G}^h(u^{*h})=0.
\end{equation}
Throughout this section we pose the following assumption:
\begin{itemize}
\item[A0: ] The assumptions of Theorem \ref{Theo3} in Section \ref{sec2.2} hold and we denote by  $\{u_k\}_k$ the sequence  generated by Algorithm \ref{BBa} which enjoys the properties asserted in Theorem \ref{Theo3}.
\end{itemize}
In particular, it is assumed that  $\|u_0 -u^*\|$ and $\| u_1-u^*\|$ are sufficiently small ($<\zeta$ with $\zeta$ defined in Theorem \ref{Theo3}) unless $\mathcal{F}$ is a strictly convex quadratic function. For the case of strictly convex quadratic functions, $u_0$ and $ u_1$ can be chosen from the whole of $\mathcal{H}$.

To describe the family of approximating sequences we choose $u^h_0, u^h_1 \in \mathcal{H}^h$ and update $u^h_k$, for $k=1,\dots$ by
\begin{equation}
\label{e1d}
u^h_{k+1} = u^h_k - \frac{1}{\alpha^h_k}\mathcal{G}^h_k,
\end{equation}
where $\mathcal{G}^h_k := \mathcal{G}^h(u^h_k)$ and the step-size $\alpha^h_k$ is chosen according to either
\begin{equation}
\label{e2d}
\alpha^{BB1,h}_k := \frac{(\mathcal{S}^h_{k-1},\mathcal{Y}^h_{k-1})_h}{(\mathcal{S}^h_{k-1},\mathcal{S}^h_{k-1})_h}, \text{ or }
\alpha^{BB2,h}_k := \frac{ (\mathcal{Y}^h_{k-1},\mathcal{Y}^h_{k-1})_h}{(\mathcal{S}^h_{k-1},\mathcal{Y}^h_{k-1})_h}.
\end{equation}
 Here we have set $\mathcal{S}^h_{k-1}:=u^h_k-u^h_{k-1}$ and $\mathcal{Y}^h_{k-1}:=\mathcal{G}^h_k-\mathcal{G}^h_{k-1}$.  We should point out that  the inner product on  $\mathcal{H}^h$ will typically reflect the norm on $\mathcal{H}$. It should not be thought of as the canonical inner-product in
 $\mathbb{R}^{N(h)}$.

Let us now formulate some additional notation and assumptions that we require for the main result of this section.
Suppose that  $\{\mathbb{P}^h\}_h$ is  a family of linear  `prolongation' operators
\begin{equation*}
\mathbb{P}^h: \mathcal{H}^h \to \mathcal{H}.
\end{equation*}
We use the following notion of convergence in the space  $\mathcal{H}$. A sequence $u^h \in \mathcal{H}^h$ is $\mathcal{H}$-convergent to $u \in \mathcal{H}$ if
\begin{equation*}
\lim_{h \downarrow 0} \|\mathbb{P}^h u^h-u\| = 0.
\end{equation*}
We have to assume that the discrete inner products approximate the original one in the following sense:
\begin{itemize}
  \item[A1: ] If $u^h \overset{\mathcal{H}}{\rightarrow} u$  and $z^h \overset{\mathcal{H}}{\rightarrow} z$  for $u,z \in \mathcal{H}$, then
 \begin{equation}
 \label{e177}
 \lim_{h \downarrow 0} (u^h,z^h)_{h} = (u,z).
\end{equation}
\end{itemize}
Moreover we need the following approximation property of  $\mathcal{G}$  by the family  $\mathcal{G}^h$.
\begin{itemize}
\item[A2: ] Suppose that $\mathcal{G}(u^*) = 0$. Then,  if $u^h \overset{\mathcal{H}}{\rightarrow}u$ with $u$ in a neighborhood of $u^*$, then
\begin{equation}\label{eqkk10}
\mathcal{G}^h(u^h) \overset{\mathcal{H}}{\rightarrow} \mathcal{G}(u).
\end{equation}
 \end{itemize}

 \begin{remark}\label{rekk1}
In applications it can occur that the convergence specified in \eqref{eqkk10} requires additional regularity of $u$ and $\mathcal{G}(u)$. In this case one assumes the existence of a subspace $\mathcal{W}$  in $\mathcal{H}$ of more regular functions, and one needs to  assure that the limit of the iterations remains in $\mathcal{W}$. In this case Assumption A2 is replaced by A2' below. For details we refer to \cite{MR912453}, for instance.
 \begin{itemize}
\item[A2': ]  There is $u^* \in \mathcal{W}$ with  $\mathcal{G}(u^*) = 0$, such that $\mathcal{G}$ is well-defined for all $u \in \mathcal{W}$ sufficiently near $u^*$ with respect to the $\mathcal{H}$-norm. Moreover, if $u\in \mathcal{W}$ with $\|u-u^*\|$ sufficiently small and  $u^h \overset{\mathcal{H}}{\rightarrow}u$, then $\mathcal{G}(u)\in \mathcal{W} $ and
\begin{equation*}
\mathcal{G}^h(u^h) \overset{\mathcal{H}}{\rightarrow} \mathcal{G}(u).
\end{equation*}
 \end{itemize}
\end{remark}
\begin{theorem}
Suppose that Assumptions A0-A2 hold.  Moreover, let $u^{h}_i \overset{\mathcal{H}}{\rightarrow} u_i$  for $i=0,1$  with $u_1\neq u_0$ and $u^h_1\neq u^h_0$.  Then for any $k' \geq 1$, we have
\begin{equation}
\label{e40}
\lim_{h \downarrow 0} \max_{1\leq k \leq k'} \|\mathbb{P}^h u_{k}^h-u_k\| = 0.
\end{equation}
\end{theorem}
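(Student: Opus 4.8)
The plan is to prove \eqref{e40} by a finite two-step induction on $k$, propagating $\mathcal{H}$-convergence through one step of the iteration \eqref{e1d}. The base cases $k=0$ and $k=1$ are precisely the hypothesis $u^h_i \overset{\mathcal{H}}{\rightarrow} u_i$. For the inductive step, assume $u^h_{k-1} \overset{\mathcal{H}}{\rightarrow} u_{k-1}$ and $u^h_k \overset{\mathcal{H}}{\rightarrow} u_k$ for some $1\le k\le k'$, and show $u^h_{k+1} \overset{\mathcal{H}}{\rightarrow} u_{k+1}$. By A0 and Theorem \ref{Theo3} the exact iterates $\{u_j\}_j$ stay in the ball $\mathcal{B}_{\tau}(u^*)$ on which A2 and \eqref{e46} are available (for a strictly convex quadratic $\mathcal{F}$ this neighbourhood is all of $\mathcal{H}$), so A2 gives $\mathcal{G}^h_k = \mathcal{G}^h(u^h_k) \overset{\mathcal{H}}{\rightarrow} \mathcal{G}(u_k) = \mathcal{G}_k$ and, likewise, $\mathcal{G}^h_{k-1} \overset{\mathcal{H}}{\rightarrow} \mathcal{G}_{k-1}$. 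Since $\mathbb{P}^h$ is linear, the triangle inequality then yields $\mathcal{S}^h_{k-1} = u^h_k-u^h_{k-1} \overset{\mathcal{H}}{\rightarrow} \mathcal{S}_{k-1}$ and $\mathcal{Y}^h_{k-1} = \mathcal{G}^h_k-\mathcal{G}^h_{k-1} \overset{\mathcal{H}}{\rightarrow} \mathcal{Y}_{k-1}$.

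Next I would pass to the limit in the step-size formulas \eqref{e2d}. By Assumption A1 each of the inner products $(\mathcal{S}^h_{k-1},\mathcal{Y}^h_{k-1})_h$, $(\mathcal{S}^h_{k-1},\mathcal{S}^h_{k-1})_h$, $(\mathcal{Y}^h_{k-1},\mathcal{Y}^h_{k-1})_h$ converges to the corresponding continuous inner product. The decisive point is that the denominators have strictly positive limits: since $u_j\neq u^*$ for $j\le k'$ (otherwise \eqref{e1} terminates), $\mathcal{S}_{k-1} = u_k-u_{k-1} = -\frac{1}{\alpha_{k-1}}\mathcal{G}_{k-1} \neq 0$, so $(\mathcal{S}_{k-1},\mathcal{S}_{k-1}) = \|\mathcal{S}_{k-1}\|^2 > 0$; and by the fundamental theorem of calculus together with the uniform positivity \eqref{e46} one has $(\mathcal{S}_{k-1},\mathcal{Y}_{k-1}) = \int_0^1 \mathcal{F}''\big(u_{k-1}+t\mathcal{S}_{k-1}\big)(\mathcal{S}_{k-1},\mathcal{S}_{k-1})\,dt \ge \alpha_{\inf}\|\mathcal{S}_{k-1}\|^2 > 0$ (this is the reasoning behind \eqref{e80}; for quadratic $\mathcal{F}$ one uses $\delta_{\inf}$ in place of $\alpha_{\inf}$). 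Consequently $\alpha^{BB1,h}_k \to \alpha^{BB1}_k$ and $\alpha^{BB2,h}_k \to \alpha^{BB2}_k$; assuming, as is implicit, that the discrete iteration is run with the same BB1-or-BB2 choice at each step as the continuous one, this gives $\alpha^h_k \to \alpha_k$ with $\alpha_k$ positive and bounded, whence $\frac{1}{\alpha^h_k} \to \frac{1}{\alpha_k}$.

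Finally, applying the linear operator $\mathbb{P}^h$ to \eqref{e1d} gives $\mathbb{P}^h u^h_{k+1} = \mathbb{P}^h u^h_k - \frac{1}{\alpha^h_k}\,\mathbb{P}^h\mathcal{G}^h_k$. Using $\mathbb{P}^h u^h_k \to u_k$, $\mathbb{P}^h\mathcal{G}^h_k \to \mathcal{G}_k$ and $\frac{1}{\alpha^h_k}\to\frac{1}{\alpha_k}$, the right-hand side converges in $\mathcal{H}$ to $u_k - \frac{1}{\alpha_k}\mathcal{G}_k = u_{k+1}$, i.e. $u^h_{k+1}\overset{\mathcal{H}}{\rightarrow} u_{k+1}$, which closes the induction. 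Since $k'$ is finite, the maximum in \eqref{e40} is over finitely many quantities $\|\mathbb{P}^h u^h_k-u_k\|$, each of which tends to $0$, hence it tends to $0$.

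The step I expect to be the main obstacle is the control of the denominators in \eqref{e2d}: one must ensure the limiting step-size is well-defined and does not degenerate, and this is exactly where the uniformly positive second-order structure near $u^*$ and the fact that the continuous iteration never reaches $u^*$ are essential; the remaining arguments are routine limit passages built on the linearity of $\mathbb{P}^h$ together with A1 and A2. One further technicality: in the setting of Remark \ref{rekk1}, where A2 is replaced by A2', one additionally needs that the limits of the iterations remain in the regularity subspace $\mathcal{W}$, which is built into the hypotheses of that variant.
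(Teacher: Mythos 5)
Your proposal is correct and follows essentially the same route as the paper's proof: an induction on $k$ that propagates $\mathcal{H}$-convergence through one step of \eqref{e1d}, using A2 for the gradients, A1 to pass to the limit in the step-size quotients \eqref{e2d}, and the triangle inequality to combine the three error contributions. The paper states this only as a sketch; your filling in of the non-degeneracy of the denominators via \eqref{e80} (and the implicit matching of the BB1/BB2 choice between the discrete and continuous iterations) is exactly the detail the paper leaves to the reader.
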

\begin{proof}
 Using \eqref{e1d} and the triangle inequality we obtain
\begin{equation}
\label{e175}
 \|\mathbb{P}^h u_{k+1}^h-u_{k+1}\|  \leq  \|\mathbb{P}^h u_{k}^h-u_{k}\|+\abs{\frac{1}{\alpha^h_k}-\frac{1}{\alpha_k}}\| \mathbb{P}^h\mathcal{G}^h_k \| +\abs{\frac{1}{\alpha_k}}\| \mathbb{P}^h \mathcal{G}^h_k-\mathcal{G}_k \|,
\end{equation}
for every $k\geq 1$.  Then, proceeding by induction,  using \eqref{e177} and \eqref{eqkk10}, and passing the limit in \eqref{e2d}  and  \eqref{e175}, it can be shown that \eqref{e40} is true for every $k'\geq 1$.
\qed \end{proof}
The termination condition for \ref{EPN} is based on the norm of the gradients for the  approximated and the original problem. Thus for $\epsilon > 0$ the iteration is terminated according to
\begin{equation}
\label{terminCond}
\begin{split}
\| \mathcal{G}^h_k \|_h < \epsilon, \quad  \text{ and } \quad \| \mathcal{G}_k \| < \epsilon,
\end{split}
\end{equation}
where $\epsilon$ is a sufficiently small positive number. In order to investigate the behaviour of convergence of the approximated problem with respect to the original problem, we consider the following quantities:
\begin{equation*}
\begin{split}
k^*(\epsilon) := \min \{ k \in \mathbb{N}:  \| \mathcal{G}_k\| < \epsilon \},   \qquad k^*_h(\epsilon) := \min \{ k \in \mathbb{N}:  \| \mathcal{G}^h_k\|_h < \epsilon \},
\end{split}
\end{equation*}
where $k^*(\epsilon)$ and $k^*_h(\epsilon)$ are the smallest iteration numbers for which the norm of corresponding gradients is less than $\epsilon$. In the following we study the relation between $k^*(\epsilon)$ and $k^*_h(\epsilon)$.

\begin{theorem}
\label{Theo2}
Suppose that Assumptions A0-A2 hold.  Further, let $u^{h}_i \overset{\mathcal{H}}{\rightarrow} u_i$  for $i=0,1$ with $u_1\neq u_0$ and $u^h_1\neq u^h_0$.  Then for any given numbers $\epsilon>0$ and  $\delta>0$, there exists a number $h_{\delta ,\epsilon} >0$ such that
\begin{equation}
\label{e43ac}
k^*(\epsilon+\delta) \leq k^*_h(\epsilon)\leq k^*(\epsilon)
\end{equation}
 for every $h \in (0, h_{\delta,\epsilon}]$.
\end{theorem}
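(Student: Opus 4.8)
The plan is to exploit the $\mathcal{H}$-convergence $\mathbb{P}^h\mathcal{G}^h_k \to \mathcal{G}_k$ established (via the previous theorem together with Assumptions A1--A2) for each fixed iterate index $k$, and to convert the convergence of the iterates into a bracketing of the stopping index. First I would fix $\epsilon>0$ and $\delta>0$. Since $\mathcal{F}$ satisfies the hypotheses of Theorem \ref{Theo3}, the sequence $\{u_k\}_k$ converges to $u^*$ with $\mathcal{G}_k\to 0$, so $k^*(\epsilon)$ and $k^*(\epsilon+\delta)$ are finite; set $K := k^*(\epsilon)$. The key point to extract is that, for the continuous problem, the gradient norm \emph{strictly} undershoots $\epsilon$ at step $K$ and strictly exceeds $\epsilon$ (in fact is $\geq \epsilon$) at all steps $1,\dots,K-1$; but to get the left inequality I will need a margin, which is where $\delta$ enters. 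Concretely, I would argue that $\|\mathcal{G}_k\|\geq \epsilon$ for $k < k^*(\epsilon+\delta)$ would fail the definition of $k^*(\epsilon+\delta)$ only if $\|\mathcal{G}_k\|<\epsilon+\delta$; so for every $k$ with $1\le k < k^*(\epsilon+\delta)$ we have $\|\mathcal{G}_k\|\ge \epsilon+\delta$.

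Next I would invoke the preceding convergence theorem: for the finitely many indices $k \in \{1,\dots,K\}$ we have $\mathbb{P}^h u^h_k \to u_k$, hence by A1--A2 (and the argument in \eqref{e175}) also $\|\mathcal{G}^h_k\|_h \to \|\mathcal{G}_k\|$ as $h\downarrow 0$, uniformly over this finite index set. Choose $h_{\delta,\epsilon}>0$ small enough that $\bigl|\,\|\mathcal{G}^h_k\|_h - \|\mathcal{G}_k\|\,\bigr| < \delta$ for all $k\in\{1,\dots,K\}$ and all $h\in(0,h_{\delta,\epsilon}]$. For the right inequality $k^*_h(\epsilon)\le k^*(\epsilon)=K$: at $k=K$ we have $\|\mathcal{G}_K\|<\epsilon$; if in addition $\|\mathcal{G}_K\|<\epsilon-\delta'$ for a suitable internal slack we'd be done immediately, but since $\|\mathcal{G}_K\|<\epsilon$ only, one must be slightly more careful — I would instead use that $\|\mathcal{G}_K\|<\epsilon$ is strict, pick the slack in the choice of $h_{\delta,\epsilon}$ to be $\tfrac12(\epsilon-\|\mathcal{G}_K\|)$ rather than $\delta$, so that $\|\mathcal{G}^h_K\|_h < \epsilon$, which forces $k^*_h(\epsilon)\le K$. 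For the left inequality $k^*(\epsilon+\delta)\le k^*_h(\epsilon)$: for any $k$ with $1\le k < k^*(\epsilon+\delta)$ we showed $\|\mathcal{G}_k\|\ge\epsilon+\delta$, hence $\|\mathcal{G}^h_k\|_h > \|\mathcal{G}_k\| - \delta \ge \epsilon$, so the discrete stopping criterion is not met at step $k$; therefore $k^*_h(\epsilon) \ge k^*(\epsilon+\delta)$.

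Combining the two bounds yields \eqref{e43ac} for all $h\in(0,h_{\delta,\epsilon}]$, after taking $h_{\delta,\epsilon}$ to be the minimum of the two thresholds above. I expect the main obstacle to be purely bookkeeping with the two different slack parameters: the right-hand inequality needs a threshold governed by the strict gap $\epsilon - \|\mathcal{G}_{k^*(\epsilon)}\|$, while the left-hand inequality needs a threshold governed by $\delta$; one has to make sure the single $h_{\delta,\epsilon}$ serves both, and that the index set over which uniform convergence of $\|\mathcal{G}^h_k\|_h$ is invoked is finite and fixed (namely $\{1,\dots,k^*(\epsilon)\}$) — this is legitimate precisely because $k^*(\epsilon)$ is an $h$-independent finite number. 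A minor technical point worth stating explicitly is that the convergence $\|\mathcal{G}^h_k\|_h\to\|\mathcal{G}_k\|$ follows from $\mathbb{P}^h u^h_k\to u_k$ by A2 (giving $\mathbb{P}^h\mathcal{G}^h_k\to\mathcal{G}_k$) and then A1 applied with $u^h=z^h=\mathcal{G}^h_k$ (giving $(\mathcal{G}^h_k,\mathcal{G}^h_k)_h\to(\mathcal{G}_k,\mathcal{G}_k)$), exactly as in the proof of the previous theorem.
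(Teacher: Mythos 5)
Your proposal is correct and follows essentially the same route as the paper's proof: establish $\|\mathcal{G}^h_k\|_h \to \|\mathcal{G}_k\|$ for each fixed $k$ via the convergence theorem and A1--A2, use the strict gap $\epsilon - \|\mathcal{G}_{k^*(\epsilon)}\|$ (the paper's $\zeta$) for the upper bound, use the margin $\delta$ together with $\|\mathcal{G}_k\|\geq\epsilon+\delta$ for $k<k^*(\epsilon+\delta)$ for the lower bound, and take the minimum of the two thresholds. The bookkeeping points you flag (two distinct slacks, uniformity over the fixed finite index set $\{1,\dots,k^*(\epsilon)\}$) are exactly how the paper resolves them.
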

\begin{proof}
Due to \eqref{e40} and  A2, we have for every $k$ that
\begin{equation}
\label{e94}
\mathcal{G}^h_k \overset{\mathcal{H}}{\rightarrow} \mathcal{G}_k
\end{equation}
and by  A1, we obtain
\begin{equation}
\label{e42}
\lim_{h \downarrow 0} \|\mathcal{G}^h_k \|_h = \| \mathcal{G}_k\|.
\end{equation}
Now,  we show that $\| \mathcal{G}^h_k\|_h < \epsilon$  for a sufficiently small $h>0$, provided that $\| \mathcal{G}_k\| < \epsilon$ holds for an iterate $k$. Since $\| \mathcal{G}_{k^*(\epsilon)}\| < \epsilon$, there exists a positive number $\zeta:=\zeta(\epsilon)$ such that  $\| \mathcal{G}_{k^*(\epsilon)}\| +\zeta < \epsilon$. Moreover, due to \eqref{e42}, there exists a positive number $h_{\epsilon}>0$ such that for every $h \in (0, h_{\epsilon}]$ we have
\begin{equation}
\label{e95}
\abs{\|\mathcal{G}^h_{k^*(\epsilon)} \|_h - \| \mathcal{G}_{k^*(\epsilon)}\|} \leq \zeta.
\end{equation}
Hence, for every $h \in (0, h_{\epsilon}]$, we obtain
\begin{equation*}
\begin{split}
\|\mathcal{G}^h_{k^*(\epsilon)}\|_h &=\| \mathcal{G}_{k^*(\epsilon)} \|+\|\mathcal{G}^h_{k^*(\epsilon)}\|_h - \| \mathcal{G}_{k^*(\epsilon)} \| \leq \|\mathcal{G}_{k^*(\epsilon)}\| + \zeta  <   \epsilon,
\end{split}
\end{equation*}
and, thus, we have
\begin{equation*}
k^*_{h}(\epsilon) \leq k^*(\epsilon)  \quad \text{ for every }  h \in (0, h_{\epsilon}],
\end{equation*}
which implies the second inequality  in \eqref{e43ac}.  Now assume that $\delta>0$ be given.  Then due to \eqref{e42} we have
\begin{equation}
\label{e43}
\lim_{h \downarrow 0} \max_{1 \leq k < k^*(\delta +\epsilon)} \abs{ \|\mathcal{G}_k^h\|_{h}-\| \mathcal{G}_k\| } = 0.
\end{equation}
By the definition of $k^*(\delta +\epsilon)$, we have
\begin{equation}
\label{e43aa}
\| \mathcal{G}_k\| \geq \delta + \epsilon    \quad  \text{ for all }  k < k^*(\delta +\epsilon).
\end{equation}
Moreover due to \eqref{e43}, there exists a positive number $h_{\delta}$ such that
\begin{equation}
\label{e43ab}
 \abs{ \|\mathcal{G}_k^h\|_{h}-\| \mathcal{G}_k\| } \leq \max_{1 \leq k' < k^*(\delta+\epsilon)} \abs{ \|\mathcal{G}_{k'}^h\|_{h}-\| \mathcal{G}_{k'}\| } \leq \delta  \quad  \text{ for all }  h \in (0 , h_{\delta}] \text{ and } k < k^*(\delta +\epsilon).
\end{equation}
Using \eqref{e43aa} and \eqref{e43ab} we infer for every $h \in (0, h_{\delta}]$ and $k < k^*(\delta + \epsilon)$  that
\begin{equation*}
\begin{split}
\| \mathcal{G}^h_k\|_h &\geq \|\mathcal{G}_k\|- \delta \geq  \delta +\epsilon - \delta  = \epsilon,
\end{split}
\end{equation*}
and, thus, $k^*_{h}(\epsilon) \geq k^*(\delta+\epsilon)$ for every $h \in (0, h_{\delta}]$.  Now for the choice of  $h_{\delta, \epsilon} :=\min\{h_{\delta}, h_{\epsilon}\}$, the relation \eqref{e43ac} holds for every $h \in   (0, h_{\delta, \epsilon}]$ and we are finished with the proof.

%Now suppose on contrary that for a given $\delta >0$ we have $k^*_N(\epsilon) < k^*(\epsilon+\delta)$ for infinitely many $N$. Then,  there exist a sequence $\{ N_i\}_i$ with $N_i \to \infty$ as $i \to \infty$, and an index $j$ such that
%\begin{equation}
%\label{e93}
%k^*_{N_i}(\epsilon) \leq  j < k^*(\epsilon+\delta) < k^*(\epsilon)       \quad \text{for every } i \geq 0 ,
%\end{equation}
% and
%\begin{equation}
%\| \mathcal{G}^{N_i}_j \|_{N_i}  < \epsilon       \quad \text{for every } i \geq 0.
%\end{equation}
%Now as  $i \to \infty$, due to \eqref{e42} and \eqref{e93} we obtain $\| \mathcal{G}_j\|  \geq \epsilon + \delta$ which is a contradiction with \eqref{e43}.
\qed \end{proof}
\begin{theorem}
\label{theo5}
Suppose that Assumptions A0-A2 hold.  Further  assume that  $u^{h}_i \overset{\mathcal{H}}{\rightarrow} u_i$  for $i=0,1$ with $u_1\neq u_0$ and $u^h_1\neq u^h_0$. Then for  each $\epsilon>0$ there exists  $h_{\epsilon}>0$ such that
\begin{equation*}
k^*(\epsilon)-\ell \leq k_h^*(\epsilon) \leq k^*(\epsilon) \quad \text{ for every } h \in (0, h_{\epsilon}],
\end{equation*}
where the integer $\ell>0$ is independent of $h$ and $\epsilon$.
 \end{theorem}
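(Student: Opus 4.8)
The plan is to upgrade Theorem~\ref{Theo2} by quantifying how fast $k^*(\cdot)$ can grow. The inclusion $k_h^*(\epsilon)\le k^*(\epsilon)$ is already the second inequality of Theorem~\ref{Theo2}: since $\|\mathcal G_{k^*(\epsilon)}\|<\epsilon$, the convergence $\|\mathcal G^h_{k^*(\epsilon)}\|_h\to\|\mathcal G_{k^*(\epsilon)}\|$ from \eqref{e42} provides a threshold $h'_\epsilon>0$ below which $\|\mathcal G^h_{k^*(\epsilon)}\|_h<\epsilon$, hence $k_h^*(\epsilon)\le k^*(\epsilon)$. The work lies in the lower bound, and the idea is to exhibit a \emph{fixed} number $\ell$ of iterations --- independent of both $\epsilon$ and $h$ --- over which the gradient norm is guaranteed to drop by at least a factor two; then every iterate sufficiently far before $k^*(\epsilon)$ must have gradient norm bounded away from $\epsilon$ (namely $\ge 2\epsilon$), and uniform $\mathcal H$-convergence of $\|\mathcal G^h_k\|_h$ to $\|\mathcal G_k\|$ over a finite range of indices closes the argument.

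The key step will therefore be to prove that there is an integer $\bar m\ge1$, depending only on the constants $m,c_1,c_2,\alpha_{\inf},\alpha_{\sup}$ produced in the proof of Theorem~\ref{Theo3} (so on neither $\epsilon$ nor $h$), such that $\|\mathcal G_{k+\bar m}\|\le\tfrac12\|\mathcal G_k\|$ for all $k\ge1$. For a strictly convex quadratic objective this is exactly Lemma~\ref{R-1}. In the general case I would revisit the subsequence $\{k_i\}$ constructed in the proof of Theorem~\ref{Theo3}, which satisfies $k_{i+1}-k_i\le m$ and $\|u_{k_{i+1}}-u^*\|\le c_2\|u_{k_i}-u^*\|$, and first establish the block estimate $\tfrac14\|u_{k_i}-u^*\|\le\|u_k-u^*\|\le c_1\|u_{k_i}-u^*\|$ for every $k$ with $k_i\le k<k_{i+1}$. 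Its upper half is \eqref{e54}; its lower half follows from the minimality of the gap $j_i=k_{i+1}-k_i$ (so that $\|\hat u^{k_i}_j-u^*\|>\tfrac12\|u_{k_i}-u^*\|$ for $j<j_i$) together with the comparison bound \eqref{e62} of Lemma~\ref{lem5}, $\|u_{k_i+j}-\hat u^{k_i}_j\|\le\lambda\|u_{k_i}-u^*\|^2$, after shrinking the initial neighbourhood so that $\lambda\|u_{k_i}-u^*\|\le\tfrac14$. Iterating the contraction through $Q$ consecutive blocks then gives $\|u_{k+\bar m}-u^*\|\le 4\,c_1\,c_2^{Q}\,\|u_k-u^*\|$ with $\bar m=(Q+1)m$, and choosing $Q$ so large that $4c_1c_2^{Q}\le\alpha_{\inf}/(2\alpha_{\sup})$ turns this, via \eqref{e60}, into $\|\mathcal G_{k+\bar m}\|\le\tfrac12\|\mathcal G_k\|$.

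Granting the halving property with $\ell:=\bar m$, the conclusion follows quickly. If $k\le k^*(\epsilon)-\ell-1$ then $k+\bar m\le k^*(\epsilon)-1$, so $\|\mathcal G_{k+\bar m}\|\ge\epsilon$ by the definition of $k^*(\epsilon)$, and hence $\|\mathcal G_k\|\ge2\epsilon$. Thus $\|\mathcal G_k\|\ge2\epsilon$ on the finite index set $\{1,\dots,k^*(\epsilon)-\ell-1\}$ --- and if this set is empty there is nothing to prove, since then $k^*(\epsilon)-\ell\le1\le k_h^*(\epsilon)$. By \eqref{e42} the convergence $\|\mathcal G^h_k\|_h\to\|\mathcal G_k\|$ is uniform over this finite set, so there is $h''_\epsilon>0$ with $\bigl|\|\mathcal G^h_k\|_h-\|\mathcal G_k\|\bigr|<\epsilon$ for all such $k$ whenever $h\le h''_\epsilon$; consequently $\|\mathcal G^h_k\|_h>\epsilon$ for every $k\le k^*(\epsilon)-\ell-1$, which forces $k_h^*(\epsilon)\ge k^*(\epsilon)-\ell$. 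Setting $h_\epsilon:=\min\{h'_\epsilon,h''_\epsilon\}$ then yields the asserted two-sided bound.

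The genuinely delicate point is the uniform halving claim in the non-quadratic case: because the BB iterates are non-monotone, one has to rule out that $\|\mathcal G_k\|$ lingers near a prescribed level for an $\epsilon$-dependent number of steps, and the only quantitative handle available is the subsequence of Theorem~\ref{Theo3}. Controlling the iterates \emph{inside} a contraction block --- precisely where the Lemma~\ref{lem5} estimate \eqref{e62} and the minimality of $j_i$ enter --- is the crux of the proof; the remaining $\epsilon$--$h$ bookkeeping is essentially that of Theorem~\ref{Theo2}.
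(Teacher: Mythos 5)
Your proposal is correct, and it follows the same overall strategy as the paper — upper bound from Theorem~\ref{Theo2}, lower bound via the subsequence $\{k_i\}$ and the constants $m,c_1,c_2$ from the proof of Theorem~\ref{Theo3}, a power of $c_2$ calibrated against $\alpha_{\inf}/\alpha_{\sup}$ through \eqref{e60}, and a final transfer to the discrete level — but the key intermediate claim is different. The paper proves only the weaker statement that for every $k$ there is some $k$-dependent offset $i^+(k)\leq\ell$ with $\|\mathcal{G}_{k+i^+(k)}\|<\|\mathcal{G}_k\|$ (strict decrease, no fixed factor, obtained by bounding $\|u_{k_{i+q^*+1}}-u^*\|$ directly against $\|u_k-u^*\|$ via \eqref{e61a}, which needs no lower block estimate); it then argues by contradiction that $\|\mathcal{G}_k\|>\epsilon$ strictly for all $k<k^*(\epsilon)-\ell$, takes $\delta:=\min_k\delta_k>0$ over that finite index set, concludes $k^*(\epsilon)-\ell\leq k^*(\epsilon+\delta)$, and invokes Theorem~\ref{Theo2} verbatim. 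You instead prove the stronger uniform halving $\|\mathcal{G}_{k+\bar m}\|\leq\frac{1}{2}\|\mathcal{G}_k\|$ at a \emph{fixed} offset, which costs you the additional lower block bound $\|u_{k_i+j}-u^*\|\geq\frac{1}{4}\|u_{k_i}-u^*\|$ (correctly extracted from the minimality of $j_i$ together with \eqref{e62} of Lemma~\ref{lem5}; this is essentially the computation \eqref{e73} already present in the appendix), but buys the explicit margin $\|\mathcal{G}_k\|\geq 2\epsilon$ on the relevant range, replacing the paper's non-constructive $\min\{\delta_k\}$ by $\delta=\epsilon$. Both routes are sound; yours is slightly heavier on machinery inside a block but cleaner in the final $\epsilon$--$h$ bookkeeping. (As a minor remark, the halving could also be obtained without the lower block bound by chaining $\|u_{k_{i'}}-u^*\|\leq c_2^{i'-i-1}\|u_{k_{i+1}}-u^*\|\leq c_1c_2^{Q}\|u_k-u^*\|$ via \eqref{e61a} applied at base point $k$, at the price of $c_1^2$ in place of $4c_1$.)
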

\begin{proof}
Theorem \ref{Theo3} implies  $R$-linear convergence of $u_k \to u^*$. It can be shown as in the proof of Theorem \ref{Theo3} that there exist a positive integer $m$,   positive numbers $c_2<1$ and $\zeta \leq\tau$,  and a subsequence of indices $\{k_i\}_i \in\mathbb{N}$ with  $k_1=1$, for which we have
\begin{equation}
\label{e61cc}
u_k \in \mathcal{B}_{\zeta}(u^*)   \quad \text{ for every }    k \geq k_1,
\end{equation}
and
\begin{equation}
\label{e61}
k_{i+1}-k_{i} \leq m \quad \text{ and }  \quad \| u_{k_{i+1}}- u^* \| \leq c_2 \|u_{k_{i}}-u^* \|,  \text{ for all } i \geq 1.
\end{equation}
Moreover, as mentioned in the proof of Theorem \ref{Theo3}, there exists a number $c_1>0$ such that
\begin{equation}
\label{e61a}
\|u_{k+j} - u^*\| \leq c_1\| u_k-u^*\| \quad  \text{ for all } j\in \{1,\dots, m\} \text{ and any }  k\geq k_1.
\end{equation}
Let us first denote the integer $q^*$ as the smallest integer for which $c_2^{q^*} < \frac{\alpha_{\inf}}{c_1\alpha_{\sup}}$ holds. The existence of  such $q^*$ is guaranteed since $c_2<1$.  Next, we show for every $k\geq k_1$ that there exists a positive integer $i^+(k)\leq m(q^*+1)-1=:\ell$ such that
\begin{equation}
\label{e61b}
\| \mathcal{G}_{k+i^+(k)}\| < \|\mathcal{G}_k\| .
\end{equation}
 For every  $k \geq k_1$,  the exists an index $i$ such that $k_i \leq k < k_{i+1}$.  Due to \eqref{e60}, \eqref{e61}, \eqref{e61a}, and the definition of $q^*$, we obtain
\begin{equation*}
\begin{split}
\| \mathcal{G}_{k_{i+q^*+1}}\| &\leq \alpha_{\sup} \|u_{k_{i+q^*+1}} -u^* \| \leq \alpha_{\sup} c^{q^*}_2 \| u_{k_{i+1}}-u^*\| \leq \alpha_{\sup} c_1c^{q^*}_2 \| u_{k}-u^*\| \\
& \leq \frac{\alpha_{\sup} c_1c^{q^*}_2}{\alpha_{\inf}}\| \mathcal{G}_k\| <\| \mathcal{G}_k\|.
\end{split}
\end{equation*}
%and
%\begin{equation}
%\begin{split}
%\|\mathcal{G}_k\| &\leq \alpha_{\sup}\| u_{k}-u^*\| \leq c_1\alpha_{\sup}\| u_{k_{i}}-u^*\| \leq  c_1c^{q^*}_2\alpha_{\sup}\| u_{k_{i-q^*}}-u^*\| \\
%& \leq \frac{c_1c^{q^*}_2\alpha_{\sup}}{\alpha_{\inf}} \| \mathcal{G}_{k_{i-q^*}}\| < \| \mathcal{G}_{k_{i-q^*}}\|.
%\end{split}
%\end{equation}
By setting $i^+(k):=k_{i+q^*+1}-k$, we have $i^+(k) \leq \ell$ and we are finished with the verification of   \eqref{e61b}.

  Now, due to the definition of $k^*(\epsilon)$, we have  $\|  \mathcal{G}_k \| \geq \epsilon$ for every  $k < k^*(\epsilon)$. We will next show that  for every  $k^*(\epsilon)\geq \ell$  that
\begin{equation}
\label{e61c}
\|\mathcal{G}_k \| > \epsilon  \quad \text{ for every }  k < k^*(\epsilon)-\ell.
\end{equation}
Suppose on contrary that there exists an index $\bar{k}< k^*(\epsilon)-\ell$ with  $\|\mathcal{G}_{\bar{k}} \| = \epsilon$. Then due to \eqref{e61b} there exists an integer $i^+(\bar{k}) \leq \ell$ such that we have $\| \mathcal{G}_{\bar{k}+i^+(\bar{k})}\| < \|\mathcal{G}_{\bar{k}}\| = \epsilon$  with  $ \bar{k}+i^+(\bar{k}) \leq \bar{k}+\ell < k^*(\epsilon)$, and this contradicts the definition of $k^*(\epsilon)$. Hence, \eqref{e61c} holds.

Due to \eqref{e61c}, for  $k<k^*(\epsilon)-\ell$  there exist strictly positive numbers $\{\delta_k\}_k$ such that  $\|\mathcal{G}_k \| = \epsilon +\delta_k$ for $k<k^*(\epsilon)-\ell$. By setting $0<\delta :=\min\{\delta_k : k< k^*(\epsilon)-\ell\}$, we obtain
\begin{equation*}
\|\mathcal{G}_k \| \geq  \epsilon +\delta    \quad \text{ for every }  k < k^*(\epsilon)-\ell.
\end{equation*}
Therefore we conclude that $k^*(\epsilon)-\ell \leq k^*(\epsilon +\delta)$. Due to Theorem \ref{Theo2}, for $\epsilon>0$ and  $\delta>0$, there exists a  number $h_{\epsilon}>0$ such that we have
\begin{equation}
\label{e44}
k^*(\epsilon)-\ell \leq k^*(\epsilon+\delta) \leq k^*_h(\epsilon)\leq k^*(\epsilon)  \quad \text{ for every } h \in (0, h_{\epsilon}].
\end{equation}
This concludes the proof.
\qed \end{proof}
\begin{remark}
\label{Remark4}
In the case of quadratic functions \eqref{QP}, due to Lemma \ref{R-1},  inequality  \eqref{e44} holds for $\ell = m$ and all initial iterates $u_0, u_1 \in  \mathcal{H}$ with $u_0\neq u_1$. In particular, if also $\delta_{\sup} < 2\delta_{\inf}$, then \eqref{e44} holds for $\ell=1$.
\end{remark}

\begin{remark}
In general, the sequence  $\{\| \mathcal{G}_k\|  \}_k$ corresponding to Algorithm \ref{BBa} is not monotonically decreasing.  This is the reason why we  have to introduce  $\ell$ in Theorem \ref{theo5}  which can possibly be larger than $1$.  In the case that  $\delta_{\sup} < 2\delta_{\inf}$, $\{\| \mathcal{G}_k\| \}_k$ is monotone decreasing and as a consequence $\ell =1$.
\end{remark}

\section{Application to Optimal Control Problems with PDEs}
In this section, we will apply Algorithm \ref{BBa} to optimal control problems which are governed by three types of  partial differential equations, including an elliptic,  a hyperbolic, and a parabolic problem. We introduce these problems in reminder of this section. For the sake of brevity, finite-dimensional approximation is only discussed for the elliptic case.
\subsection{Dirichlet Optimal Control for the Poisson Equation}
\label{DirichletPoisson}
\subsubsection{Continuous Problem}
\label{SubDirichletPoisson}
In this subsection,  we consider the following elliptic Dirichlet boundary control problem
\begin{align}
\label{e147}
&\min_{u\in L^2(\Gamma)} J(u,y):= \frac{1}{2}\|y-y_d\|^2_{L^2(\Omega)}+ \frac{\beta}{2}\| u\|^2_{L^2(\Gamma)},   \\
&\text{subject to }
\begin{cases} \label{e148}
-\Delta y = f  &\text{ in }    \Omega,\\
 y = u  &\text{ on }   \Gamma,
\end{cases}
\end{align}
on an open convex bounded polygonal set $\Omega \subset \mathbb{R}^2$  with boundary denoted by $\Gamma :=\partial \Omega$. We assume that $f,  y_d\in L^2(\Omega)$ and $\beta >0$. Then, for a given  $(u,f) \in L^2(\Gamma)\times L^2(\Omega)$, the solution $y(u,f) \in L^2(Q)$ of \eqref{e148} exists in a very weak sense and it satisfies the following variational equation
\begin{equation*}
(y, -\Delta \varphi)_{L^2(\Omega)}+(u,\partial_{\nu}\varphi)_{L^2(\Gamma)} = (\varphi, f)_{L^2(\Omega)}     \quad  \text{ for all } \varphi \in H^2(\Omega)\cup H_0^1(\Omega).
\end{equation*}
The corresponding  solution operator defined by $(u,f) \mapsto y(u,f)$ is a continuous operator from $L^2(\Gamma) \times  L^2(\Omega)$ to $L^2(\Omega)$. See e.g., \cite{MR775683,MR1173209}. Moreover, the linear operators  $\mathcal{L}: L^2(\Gamma) \to L^2(\Omega)$ defined by $u \mapsto y(u,0)$, and  $\Pi: L^2(\Omega) \to L^2(\Omega)$ defined by $f \mapsto y(f,0)$ are continuous. Then by defining  $\mathcal{X}:=L^2(\Omega)$,  $\mathcal{H}:=L^2(\Gamma)$ and $\psi:= -\Pi f +  y_d$, we can express the optimal control problem \eqref{e147}-\eqref{e148} as the following linear least squares problem
\begin{equation}
\label{e86}
\tag{LS}
\min_{u \in \mathcal{H}}  \mathcal{F}(u) := \frac{1}{2} \| \mathcal{L}u-\psi \|^2_{\mathcal{X}} +\frac{\beta}{2}\|u\|^2_{\mathcal{H}}.
\end{equation}
By a short computation, it can be shown that the problem \eqref{e147}-\eqref{e148}  can be written in the form of \eqref{QP}, where  $\mathcal{A}:=\mathcal{L^*L}+\beta \mathcal{I} $  with $\mathcal{L}^* :\mathcal{X} \to \mathcal{H}$ defined as the adjoint operator of  $\mathcal{L}$, and $b:=\mathcal{L}^*\psi$.  Clearly, the operator $\mathcal{A}$ is uniformly  positive,  bounded,  and self-adjoint on the Hilbert space $\mathcal{H}$ and thus the existence and uniqueness  of the solution to the problem \eqref{e147}-\eqref{e148} can be obtained  due the fact that  $\mathcal{A}$ has a bounded inverse.
\begin{remark}
According to \cite{MR2084239}[Theorem 4.2], for each pair $(f,u) \in L^2(\Omega)\times L^2(\Gamma)$, the solution $y(f,u)$ to \eqref{e148} belongs to the space $H^{\frac{1}{2}}(\Omega)$, which is continuously and compactly embedded to $L^2(\Omega)$. Therefore the linear operator $\mathcal{L}: \mathcal{H} \to \mathcal{H}$ is compact, and we conclude that  $\mathcal{A}$ has the form \eqref{poco}.

For every $u \in \mathcal{H}$,  the derivative of $\mathcal{F}$ at $u$ in direction $\delta u \in  \mathcal{H}$ can be expressed by
\begin{equation}
\label{e155}
\mathcal{F}'(u)\delta u = (\mathcal{L}^* (\mathcal{L}u-\psi)+\beta u , \delta u  ),
\end{equation}
and the gradient of  $\mathcal{F}$ at $u$ is identified by  $\mathcal{G}(u) =\mathcal{L}^* (\mathcal{L}u-\psi)+\beta u$. Alternatively, if we consider  the solution $p(u)\in H^2(\Omega) \cap H^1_0(\Omega)$  of the adjoint equation
\begin{equation}
\begin{cases} \label{e154}
-\Delta p =y(u,f)-y_d     &\text{ in }    \Omega,\\
 p = 0  &\text{ on }   \Gamma,
\end{cases}
\end{equation}
where $y(u,f) \in L^2(\Omega)$ is the solution of \eqref{e148},  then the directional derivative \eqref{e155} and the corresponding  gradient $\mathcal{G}$ at point $u$ can be rewritten as
\begin{equation}
\label{e156}
\begin{split}
\mathcal{F}'(u) \delta u = ( \partial_{\nu}p(u)+\beta u , \delta u )  \text{ for all } \delta u \in \mathcal{H},\text{ and } \quad
\mathcal{G}(u)  =  \partial_{\nu}p(u)+\beta u  \text{ in }  \mathcal{H}.
\end{split}
\end{equation}
\end{remark}
 For the global minimizer $u^* \in \mathcal{H}$ to \eqref{e86}, the first-order optimality condition can be expressed as
\begin{equation}
\label{e90}
(\mathcal{\mathcal{L^*L}+\beta \mathcal{I}})u^*= \mathcal{L}^*\psi,
\end{equation}
 which can be rewritten, equivalently, as the following systems of equations
 \begin{equation*}
 \begin{cases}
 y^* = y(u^*,f)    &\text{ in } L^2(\Omega),   \\
 \partial_{\nu}p^*=-\beta u^*& \text{ in } L^2(\Gamma), \\
 -\Delta p^* = y^*-y_d   &\text{ in }  L^{2}(\Omega), \text{with } p^*=0  \text{ on } \Gamma.
 \end{cases}
 \end{equation*}
 \subsubsection{Discretized Problem }
 \label{subsubdis}
For the  discretization of \eqref{e147}-\eqref{e148}, we use finite elements. Let us consider the regular family of triangulations $\{\mathcal{T}_h\}_{h>0}$ of $\overline{\Omega}$ with $\overline{\Omega}=\cup_{T \in \mathcal{T}_h}T$ and the mesh-size defined by $h:= \max\{\diam(T) : T \in \mathcal{T}_h \}$. Let $\{x_j\}_{1 \leq j \leq N(h)}$ be the nodes which lies on the boundary with the numbering which starts at the origin in the counterclockwise and $x_{N(h)+1}=x_1$. Then we define the space of discretized control by
\begin{equation*}
\mathcal{H}^h :=\{u^h \in C(\Gamma) : u^h|_{[x_j,x_{j+1}]} \in \mathcal{P}^1 \text{ for } j=1,\dots, N(h)\},
\end{equation*}
and, we consider the space  $V^h \subset H^1(\Omega)$ defined by
\begin{equation*}
V^h :=\{y^h \in C(\bar{\Omega}) : y^h|_T \in \mathcal{P}^1 \text{ for every } T\in \mathcal{T}_h \},
\end{equation*}
where $\mathcal{P}^1$ is the space of polynomials of degree less than or equal to 1. Further we set $V^h_0 := V^h \cap H^1_{0}(\Omega)$.  The space $\mathcal{H}^{h}$ is formed by the  restriction of the functions of $V^h$ to $\partial\Omega$.  Clearly, we have $\mathcal{H}^h \subset \mathcal{H}$ and, as a result, the finite-dimensional space $\mathcal{H}^h$ is endowed with the inner product and the norm introduced by the space $\mathcal{H}= L^2(\Gamma )$.  Then, naturally, the prolongation operator $\mathbb{P}^h:\mathcal{H}^h \to \mathcal{H}$ is defined to be the canonical injection operator i.e., $\mathbb{P}^h(u^h) = u^h$ for every $u^h \in \mathcal{H}^h$.
Let us consider the orthogonal projection operator $\Pi^h:  \mathcal{H} \to \mathcal{H}^h$  defined by
\begin{equation*}
( \Pi^hv, u^h)_{ \mathcal{H}} = (v,u^h)_{ \mathcal{H}}     \text{ for all  } u^h\in \mathcal{H}^h.
\end{equation*}
It satisfies the following estimate
\begin{equation}
\label{e171}
\| u- \Pi^h u \|_{\mathcal{H}}  \leq ch^{\frac{1}{2}}\| u\|_{H^{\frac{1}{2}}(\Gamma)},
\end{equation}
for every $u \in  H^{\frac{1}{2}}(\Gamma)$, see, e.g., \cite{MR2084239,MR2272157}. For every $u \in  \mathcal{H}$  we consider the unique discrete solution  $y^h(u) \in V^h$ satisfying
\begin{equation}
\label{e151}
\begin{cases}(\nabla y^h, \nabla \phi^h )  = (f,  \phi^h )  \text{ for all } \phi^h \in V^h,  \\
y^h|_{\Gamma} = \Pi^h u.
 \end{cases}
\end{equation}
%For every $u \in \mathcal{H}$, \eqref{e151} admits a unique solution  $y^h(u)$.
Then we can define the discrete objective function  in $\mathcal{H}$ by
\begin{equation}
\label{e152}
 J^h(u,y^h(u)):= \frac{1}{2}\|y^h(u)-y_d\|^2_{L^2(\Omega)}+ \frac{\beta}{2}\| u\|^2_{L^2(\Gamma)}.
\end{equation}
The finite-dimensional approximation of  \eqref{e147}-\eqref{e148} can be expressed as
\begin{equation}
\label{e153}
\min_{u^h\in \mathcal{H}^h} \mathcal{F}^h(u^h) = \min_{u^h\in \mathcal{H}^h} J^h(u^h,y^h(u^h)).
\end{equation}
Existence of a solution to \eqref{e153} follows by similar arguments as for the continuous problem. Given $u\in \mathcal{H}$ we consider the adjoint state  $p^h(u) \in V^h_0$ as  the solution of
\begin{equation}
\label{e157}
 (\nabla p^h(u),  \nabla \psi^h )_{L^2(\Omega)} = (y^h(u)-y_d, \psi^h )_{L^2(\Omega)}   \text{ for all } \psi^h \in V^h_0.
 \end{equation}
In order to compute the gradient of $\mathcal{F}^h$, analogously to the expression  \eqref{e156}, we need to characterise a discrete normal derivative $\partial^h_{\nu} p^h(u)$.  For every $ u \in\mathcal{H}$, similarly to  \cite{MR2272157}[ Proposition 4.2], $\partial^h_{\nu} p^h(u) \in \mathcal{H}^h$ is characterized as the unique solution of the following variational problem
\begin{equation*}
(\partial^h_{\nu} p^h(u),\varphi^h)_{\mathcal{H}} = (\nabla p^h(u),  \nabla \varphi^h )_{L^2(\Omega)}-(y^h(u)-y_d, \varphi^h )_{L^2(\Omega)}    \text{ for all }  \varphi^h \in V^h,
\end{equation*}
where $p^h(u) \in V^h_0$ is the solution of \eqref{e157}. Next, we prove the following useful estimate.
\begin{lemma}
There exists a constant $c$ depending on $f$ and  $y_d$, and independent of $h$ such that
\begin{equation}
\label{e158}
\|\partial_{\nu} p(u)- \partial^h_{\nu} p^h(v)\|_{\mathcal{H}}  \leq c \left( \| u-v\|_{\mathcal{H}}+h^{\frac{1}{2}}(1+ \| v\|_{\mathcal{H}}) \right) \text{ for all } u,v \in \mathcal{H}.
\end{equation}
\end{lemma}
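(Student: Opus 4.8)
The plan is to insert the continuous adjoint normal derivative evaluated at the second argument $v$ and use the elementary splitting
\[
\|\partial_{\nu} p(u)-\partial^h_{\nu}p^h(v)\|_{\mathcal{H}} \le \|\partial_{\nu}p(u)-\partial_{\nu}p(v)\|_{\mathcal{H}} + \|\partial_{\nu}p(v)-\partial^h_{\nu}p^h(v)\|_{\mathcal{H}},
\]
estimating the first term by $c\|u-v\|_{\mathcal{H}}$ (a stability estimate for the continuous control-to-adjoint-trace map) and the second by $ch^{\frac12}(1+\|v\|_{\mathcal{H}})$ (a finite element error estimate).

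For the first term, set $w:=p(u)-p(v)$. Since the source $f$ in \eqref{e148} and \eqref{e154} cancels, $w\in H^2(\Omega)\cap H^1_0(\Omega)$ solves $-\Delta w = y(u,f)-y(v,f)=\mathcal{L}(u-v)$ in $\Omega$ with $w=0$ on $\Gamma$. Elliptic regularity on the convex polygon $\Omega$ then gives $\|w\|_{H^2(\Omega)}\le c\|\mathcal{L}(u-v)\|_{L^2(\Omega)}\le c\|u-v\|_{\mathcal{H}}$, where continuity of $\mathcal{L}:\mathcal{H}\to\mathcal{X}$ is used, and the normal-derivative trace estimate $\|\partial_{\nu}w\|_{L^2(\Gamma)}\le c\|w\|_{H^2(\Omega)}$ closes this term.

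For the second term, integration by parts in \eqref{e154} shows that, like its discrete counterpart, $\partial_{\nu}p(v)$ is characterized by $(\partial_{\nu}p(v),\varphi)_{\mathcal{H}}=(\nabla p(v),\nabla\varphi)_{L^2(\Omega)}-(y(v,f)-y_d,\varphi)_{L^2(\Omega)}$ for all $\varphi\in V^h$, hence for every $\varphi^h\in V^h$
\[
(\partial_{\nu}p(v)-\partial^h_{\nu}p^h(v),\varphi^h)_{\mathcal{H}} = (\nabla(p(v)-p^h(v)),\nabla\varphi^h)_{L^2(\Omega)} - (y(v,f)-y^h(v),\varphi^h)_{L^2(\Omega)}.
\]
Combined with the projection bound \eqref{e171} applied to $\partial_{\nu}p(v)\in H^{\frac12}(\Gamma)$ and a discrete trace/inverse inequality (testing with a suitable $V^h$-extension of $\Pi^h\partial_{\nu}p(v)-\partial^h_{\nu}p^h(v)\in\mathcal{H}^h$), this reduces the estimate to three standard ingredients, all available in \cite{MR2084239,MR2272157}: (i) the continuity $\|y(v,f)\|_{H^{\frac12}(\Omega)}\le c(\|f\|_{L^2(\Omega)}+\|v\|_{\mathcal{H}})$ of the very weak solution operator, which with elliptic regularity gives $\|p(v)\|_{H^2(\Omega)}\le c(1+\|v\|_{\mathcal{H}})$ and hence $\|\partial_{\nu}p(v)\|_{H^{\frac12}(\Gamma)}\le c(1+\|v\|_{\mathcal{H}})$; (ii) the low-regularity state error $\|y(v,f)-y^h(v)\|_{L^2(\Omega)}\le ch^{\frac12}(1+\|v\|_{\mathcal{H}})$; and (iii) the adjoint error $\|p(v)-p^h(v)\|_{H^1(\Omega)}\le ch^{\frac12}(1+\|v\|_{\mathcal{H}})$, obtained from the Céa/duality argument for \eqref{e157} with right-hand side perturbed by $y(v,f)-y^h(v)$. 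Collecting the three contributions yields the asserted $h^{\frac12}(1+\|v\|_{\mathcal{H}})$ bound, with a constant depending on $\Omega$, $\beta$, $\|f\|_{L^2(\Omega)}$ and $\|y_d\|_{L^2(\Omega)}$ only.

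The main obstacle is items (ii)--(iii): because $u,v\in L^2(\Gamma)$, the state $y(v,f)$ lies only in $H^{\frac12}(\Omega)$, so the Aubin--Nitsche duality argument for the $L^2$-error of the discrete state delivers only half an order of convergence, and this half order must be propagated through the adjoint solve and the boundary trace while carefully tracking the linear dependence on $\|v\|_{\mathcal{H}}$ in every constant. The remaining pieces --- the cancellation of $f$, the $H^2$-regularity estimate, the projection bound \eqref{e171}, and the variational characterizations of the two normal derivatives --- are routine, so the proof will essentially assemble the finite element estimates of \cite{MR2084239,MR2272157}, adapted to the present setting, around the elementary splitting above.
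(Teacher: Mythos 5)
Your overall architecture coincides with the paper's: the triangle-inequality splitting through $\partial_{\nu}p(v)$, the $H^2$-regularity/trace estimate for the continuous part, and, for the discretization error, the decomposition into the projection error $I_1=\|\partial_{\nu}p(v)-\Pi^h\partial_{\nu}p(v)\|^2_{\mathcal H}$ (handled by \eqref{e171} and $\|\partial_\nu p(v)\|_{H^{1/2}(\Gamma)}\le c(1+\|v\|_{\mathcal H})$) and the term $I_2=\|\Pi^h\partial_{\nu}p(v)-\partial^h_{\nu}p^h(v)\|^2_{\mathcal H}$, attacked by testing the difference of the two variational characterizations with a $V^h$-extension $w^h$ of that boundary function. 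The term $(y(v,f)-y^h(v),w^h)_{L^2(\Omega)}$ is also handled as in the paper, via the $O(h^{1/2})$ state error.

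The gap is in your ingredient (iii). If you estimate the gradient term as $|(\nabla(p(v)-p^h(v)),\nabla w^h)|\le \|p(v)-p^h(v)\|_{H^1(\Omega)}\|w^h\|_{H^1(\Omega)}$ with $\|p(v)-p^h(v)\|_{H^1(\Omega)}=O(h^{1/2})$ (which is all the perturbed right-hand side $y^h(v)-y_d$ allows through C\'ea plus a consistency estimate in $H^{-1}$ controlled by $\|y-y^h\|_{L^2}$), you cannot close the argument: any $H^1$-stable extension of boundary data satisfies only $\|w^h\|_{H^1(\Omega)}\le c\|w^h|_{\Gamma}\|_{H^{1/2}(\Gamma)}\le ch^{-1/2}\|w^h|_{\Gamma}\|_{\mathcal H}=ch^{-1/2}\sqrt{I_2}$ (the trace theorem forbids anything better), so the product is $O(h^{1/2})\cdot O(h^{-1/2})\sqrt{I_2}=O(1)\sqrt{I_2}$ and the $h^{1/2}$ rate for $I_2$ is lost entirely. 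The paper avoids this by taking $w^h$ \emph{discrete-harmonic}, so that $(\nabla\phi^h,\nabla w^h)_{L^2(\Omega)}=0$ for all $\phi^h\in V^h_0$; since both $p^h(v)$ and the interpolant $\mathcal I_h p(v)$ lie in $V^h_0$, the gradient term collapses to $(\nabla(p(v)-\mathcal I_h p(v)),\nabla w^h)_{L^2(\Omega)}$, which carries the full interpolation order $O(h)\|p(v)\|_{H^2(\Omega)}$ and therefore survives the $h^{-1/2}$ loss from the inverse estimate with $O(h^{1/2})$ to spare (see \eqref{e166}--\eqref{e167}). Without this Galerkin-orthogonality step — or, alternatively, a genuinely $O(h)$ bound for $\|p(v)-p^h(v)\|_{H^1(\Omega)}$, which would require an $H^{-1}$-error estimate for the state that you neither state nor prove — your collection of ingredients does not yield \eqref{e162}, and hence not \eqref{e158}.
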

\begin{proof}
This proof is based on the results from \cite{MR2272157}, where $u \in L^{\infty}(\Omega)$ was used in the context of semilinear elliptic equation.  First, using a similar argument as in \cite{MR2084239,MR2272157},  one can show that
\begin{equation}
\label{e161}
\| y(u)-y^h(u)\|_{\mathcal{H}} \leq c (1+\| u\|_{\mathcal{H}})^2h^{\frac{1}{2}},
\end{equation}
where the constant $c$ depends on $f$. From \eqref{e161}, it follows that
\begin{equation*}
\| y(u)-y^h(v)\|_{\mathcal{H}} \leq c \left(\|u-v\|_{\mathcal{H}}+h^{\frac{1}{2}}(1+\| u\|_{\mathcal{H}})\right) \text{ for all } u,v \in \mathcal{H},
\end{equation*}
Next, we show that
\begin{equation}
\label{e162}
\| \partial_{\nu}p(u) -\partial^h_{\nu}p^h(u) \|_{\mathcal{H}} \leq ch^{\frac{1}{2}}(1+ \| u\|_\mathcal{H}) \text{ for all  }   u\in \mathcal{H}.
\end{equation}
Recall that $p(u) \in H^2(\Omega)\cap H^1_0(\Omega)$ and therefore $\partial_{\nu}p(u) \in H^{\frac{1}{2}}(\Gamma)$. For the left hand-side of \eqref{e162} we obtain
\begin{equation}
\label{e173}
\| \partial_{\nu}p(u) -\partial^h_{\nu}p^h(u) \|_{\mathcal{H}}^2  = \int_{\Gamma}  \abs{ \partial_{\nu}p(u)-\Pi^h \partial_{\nu}p(u) }^2d\mathcal{S}+ \int_{\Gamma}  \abs{ \Pi^h \partial_{\nu}p(u)-\partial^h_{\nu}p^h(u) }^2d\mathcal{S} =: I_1+I_2.
\end{equation}
The last term can be equivalently be expressed as
\begin{equation}
\label{e178}
 I_2 =  \int_{\Gamma} (\partial_{\nu}p(u)-\partial^h_{\nu}p^h(u))(\Pi^h\partial_{\nu} p(u)-\partial_{\nu}^hp^h(u))d\mathcal{S}.
\end{equation}
Let  $w^h \in V^h$ be the solution of the following variational equation
\begin{equation}
\label{e163}
\begin{cases}(\nabla w^h, \nabla \phi^h )  = 0  \text{ for all } \phi^h \in V^h,  \\
w^h|_{\Gamma} = \Pi^h \partial_{\nu}p(u)-\partial^h_{\nu}p^h(u).
 \end{cases}
\end{equation}
Then,  by referring to \cite{MR842125}[Lemma 3.2], we have the following estimate for \eqref{e163}
\begin{equation}
\label{e164}
\| w^h\|_{H^1(\Omega)} \leq c \|  \Pi^h \partial_{\nu}p(u)-\partial^h_{\nu}p^h(u) \|_{H^{\frac{1}{2}}(\Gamma)},
\end{equation}
with a constant $c$ independent of $h$.  Using the definition of $\partial^h_{\nu}p^h(u)$ and  Green formula for  $\partial_{\nu}p(u)$, we obtain
\begin{equation}
\label{e165}
(\partial_{\nu}p(u)-\partial^h_{\nu}p^h(u), \phi^h )_{\mathcal{H}} =( \nabla (p(u)-p^h(u)), \nabla \phi^h )_{L^2(\Omega)}+(y^h(u)-y(u),\phi^h)_{L^2(\Omega)}
\end{equation}
for every $\phi^h \in V^h$. Using \eqref{e178}, \eqref{e163}, and \eqref{e165}, we find
 \begin{equation*}
 I_2 = ( \nabla (p(u)-p^h(u)), \nabla w^h )_{L^2(\Omega)}+(y^h(u)-y(u), w^h)_{L^2(\Omega)}.
 \end{equation*}
 Moreover, we have
\begin{equation}
\label{e166}
(\nabla p^h(u), \nabla w^h )_{L^2(\Omega)} = ( \nabla \mathcal{I}_h p(u), \nabla w^h )_{L^2(\Omega)}=0,
\end{equation}
where $\mathcal{I}_h \in \mathcal{L}(C(\overline{\Omega}),V_0^h)$ stands for the classical interpolation operator, see e.g., \cite{MR1278258}. Due to \eqref{e166} and the definition of $w^h$ from \eqref{e163},  we obtain
 \begin{equation}
 \label{e170}
 I_2 = ( \nabla (p(u)- \mathcal{I}_h  p(u)), \nabla w^h )_{L^2(\Omega)}+(y^h(u)-y(u),w^h)_{L^2(\Omega)}.
 \end{equation}
 Using \eqref{e164}, the interpolation estimate, and the following inverse estimate (see e.g., \cite{MR2084239})
 \begin{equation*}
 \| u^h\|_{H^{\frac{1}{2}}(\Gamma)} \leq Ch^{-\frac{1}{2}}\| u^h\|_{\mathcal{H}}   \text{ for all  } u^h \in \mathcal{H}^h,
\end{equation*}
 we infer that
 \begin{equation}
 \label{e167}
 \begin{split}
 |( \nabla (p(u)- \mathcal{I}_h  p(u)), \nabla w^h )_{L^2(\Omega)}| &\leq \| \nabla (p(u)- \mathcal{I}_h  p(u)) \|_{L^2(\Omega)}  \| w^h \|_{H^1(\Omega)} \\
&\leq ch\| p(u)\|_{H^2(\Omega)}\| w^h|_{\Gamma} \|_{H^{\frac{1}{2}}(\Gamma)}\\
&\leq ch^{\frac{1}{2}} (1+ \| u\|_{\mathcal{H}}) \| w^h|_{\Gamma}\|_{\mathcal{H}} \\
&\leq ch^{\frac{1}{2}}(1+ \| u\|_{\mathcal{H}}) \sqrt{I_2},
 \end{split}
 \end{equation}
where the constant $c$ from the third line of \eqref{e167} depends also on $y_d$. Moreover, due to \eqref{e161}, we can write
 \begin{equation}
 \label{e168}
 \begin{split}
 |(y^h(u)-y(u),w^h)_{L^2(\Omega)}| \leq \|y^h(u)-y(u)\|_{L^2(\Omega)} \| w^h\|_{L^2(\Omega)} \leq  ch^{\frac{1}{2}}(1+ \| u\|_{\mathcal{H}}) \sqrt{I_2}.
 \end{split}
 \end{equation}
From \eqref{e170}, \eqref{e167}, and \eqref{e168}, it follows that
\begin{equation}
\label{e169}
I_2 \leq ch(1+ \| u\|_{\mathcal{H}})^2.
\end{equation}
 Further, using \eqref{e171} we obtain
\begin{equation}
\label{e172}
I_1 \leq ch\| \partial_{\nu}p(u) \|^2_{H^{\frac{1}{2}}(\partial \Omega)} \leq ch\| p(u)\|^2_{H^2(\Omega)} \leq ch(1+ \| u\|_{\mathcal{H}})^2.
\end{equation}
Now, from \eqref{e173}, \eqref{e169}, and \eqref{e172},  we conclude \eqref{e162}. Finally, using \eqref{e162} we can write that
\begin{equation*}
\begin{split}
\|\partial_{\nu} p(u)-\partial^h_{\nu}p^h(v) \|_{\mathcal{H}} &\leq \| \partial_{\nu} p(u)-\partial_{\nu}p(v)\|_{\mathcal{H}} +\| \partial_{\nu} p(v)-\partial^h_{\nu}p^h(v)\|_{\mathcal{H}} \\
& \leq c\| p(u)-p(v)\|_{H^2(\Omega)} + ch^{\frac{1}{2}}(1+ \| v\|_{\mathcal{H}}) \\
& \leq c \left( \| u-v\|_{\mathcal{H}}+h^{\frac{1}{2}}(1+ \| v\|_{\mathcal{H}}) \right),
\end{split}
\end{equation*}
for every $u,v \in \mathcal{H}$ and we are finished with the verification of \eqref{e158}.
\qed \end{proof}

Now we are in the position in which we can verify the assumptions A1-A2 of Section \ref{Sec3}. A1 follows from the definition of $\mathcal{H}^h$ and $\mathbb{P}^h$. To verify A2,  assume that $u^h \overset{\mathcal{H}}{\rightarrow} u $ with $u^h \in \mathcal{H}^h$.  Similarly to \eqref{e156}, the directional derivative and its corresponding  gradient of $\mathcal{F}^h$ of the discretized problem \eqref{e153} at point $u^h$ can be rewritten as
\begin{equation}
\label{e159}
\begin{split}
{\mathcal{F}^h}'(u^h) {\delta u}^h = ( \partial^h_{\nu}p^h(u^h)+\beta u^h , {\delta u}^h )  \text{ for all } {\delta u}^h \in \mathcal{H}^h,\text{ and } \quad
\mathcal{G}^h(u^h)  =  \partial^h_{\nu}p^h(u^h)+\beta u^h  \text{ in }  \mathcal{H}^h.
\end{split}
\end{equation}
Then by  \eqref{e156},  \eqref{e159}, and  \eqref{e158}, we obtain
\begin{equation}
\label{e160}
\| \mathcal{G}(u)- \mathbb{P}^h\mathcal{G}^h(u^h)\|_{\mathcal{H}} \leq  \|\partial_{\nu} p(u)- \partial^h_{\nu} p^h(u^h) \|_{\mathcal{H}} +\beta \| u-u^h \|_{\mathcal{H}} \leq (c+\beta) \| u-u^h\|_{\mathcal{H}} +ch^{\frac{1}{2}}(1+\|u^h\|_{\mathcal{H}}).
\end{equation}
Hence,  $\mathcal{G}^h(u^h) \overset{\mathcal{H}}{\rightarrow} \mathcal{G}(u)$ follows by sending $h$ to zero in \eqref{e160}.
\begin{remark}
Due the fact that $\partial_{\nu}p(u) \in  H^{\frac{1}{2}}(\Gamma)$ for every $u \in \mathcal{H}$, using \eqref{e156} and  Step 4 in Algorithm \ref{BBa}, it is easy to see that for every $u_0,u_1 \in  H^{\frac{1}{2}}(\Gamma)$, the sequence $\{u_k\}_k$ stays in the space $H^{\frac{1}{2}}(\Gamma)$. Moreover, for given  $y_d,f \in L^{p^*}(\Omega)$ with $p^* >2$, we have $y\in W^{1,\overline{p}}(\Omega)$ and $p(y,y_d)\in W^{2,\overline{p}}(\Omega)$ for  $\overline{p} \in (2,p^*]$ depending on $\Omega$, see e.g., \cite{MR2272157}[Theorem 3.4]. Hence, $\{u_k\}_k \subset  W^{1-\frac{1}{\overline{p}},\overline{p}}(\Gamma) \subset C(\Gamma)$ provided that  $u_0,u_1 \in  W^{1-\frac{1}{\overline{p}},\overline{p}}(\Gamma)$. In this case $u^h_0,u^h_1$ can be chosen as $\mathcal{I}^{\Gamma}_hu_0, \mathcal{I}^{\Gamma}_hu_1 \in \mathcal{H}^h$ where $\mathcal{I}^{\Gamma}_h \in \mathcal{L}(W^{1-\frac{1}{\overline{p}},\overline{p}}(\Gamma),\mathcal{H}^h)$ is the standard interpolation operator.
\end{remark}

% \begin{align}
%\begin{cases}
%-\Delta y^* = f  &\text{ in }    \Omega,\\
% y^* = u^*  &\text{ on }   \Gamma,
%\end{cases}\quad
%\partial_{\nu}p^*=-\beta u^*  \text{ on } \Gamma,
%\quad
%\begin{cases}
%-\Delta p^* = y^*-y_d  &\text{ in }    \Omega,\\
% p^* = 0  &\text{ on }   \Gamma,
%\end{cases}
%\end{align}
\subsection{Neumann Optimal Control for the  Linear Wave Equation}
\label{NeumannSubsec}
Let us consider the optimal control problem
\begin{align}
&\min_{u\in L^2(\Sigma_c)} J(u,y):= \frac{\alpha_1}{2}\|y-y_d\|^2_{L^2(Q)}+ \frac{\alpha_2}{2}\|y(T)-z_d\|^2_{L^2(\Omega)}+\frac{1}{2}\| u\|^2_{L^2(\Sigma_c)}, \label{e85}   \\  \label{e84}
&\text{subject to }
\begin{cases}
y_{tt}-\Delta y = f  &\text{ in }    Q,\\
\partial_{\nu}y =u   & \text{ on }  \Sigma_c,\\
y = 0  &\text{ on }     \Sigma_0,\\
y(0)=y^1_0,\quad y_t(0)=y^2_0 &\text{ on } \Omega,
\end{cases}
\end{align}
where $\alpha_1$, $\alpha_2$, and $\beta$ are positive constants,  the desired state $y_d$ and the desired finial state $z_d$ are smooth enough,  $Q:= (0,T)\times\Omega$, $\Sigma_c:= (0,T)\times  \Gamma_c$, $\Sigma_0 :=(0,T)\times \Gamma_0$, and  $\Omega \in \mathbb{R}^n$ is a bounded domain with the smooth boundary $\partial \Omega:=\overline{\Gamma_c} \cup \overline{\Gamma_0}$. Further,  two disjoint components $\Gamma_c$, $\Gamma_0$ are relatively open in $\partial \Omega$.

Before investigating the optimal control problem, we recall some useful results for equation \eqref{e84}. The  operator $A: L^2(\Omega) \supset \mathcal{D}(A) \to L^2(\Omega)$ defined by $Ah = -\Delta h$ is a positive self-adjoint operator with $\mathcal{D}(A):=\{ h \in H^2(\Omega), h|_{\Gamma_0} =  \partial_{\nu}h|_{\Gamma_{c}}=0 \}$.  Thus, we define the spaces  $H_{\Gamma_0}^{\alpha}(\Omega):=\mathcal{D}(A^{\frac{\alpha}{2}})$ for $0\leq\alpha \leq 1$, and by $(H^{\alpha}_{\Gamma_0}(\Omega))^*$ we denote the corresponding dual space.  These spaces are used throughout this subsection.  We use the following notion of  solution \cite{MR0271512,MR0350177}.
\begin{definition}[Very weak solution]  Let $T>0$, and  $(y^1_0, y^2_0,u,f) \in L^2(\Omega)\times (H^1_{\Gamma_0}(\Omega))^* \times L^2(\Sigma_c) \times L^2(0,T;(H^1_{\Gamma_0}(\Omega))^*)$ be given. A function $y \in L^{2}(Q)$ is referred to as the very weak solution of  \eqref{e84}, if the following inequality holds
\begin{equation}
\label{e89}
\begin{split}
\langle f,\varphi &\rangle_{(L^2(0,T;(H^1_{\Gamma_0}(\Omega))^*), L^2(0,T; H^1_{\Gamma_0}(\Omega)))}  =\\
&(g,y)_{L^2(Q)}+(y^1_0, \varphi_t(0))_{L^2(\Omega)}-\langle y^2_0, \varphi(0) \rangle_{ ((H^1_{\Gamma_0}(\Omega))^*,H_{\Gamma_0}^{1}(\Omega))}-(u,\varphi)_{L^2(\Sigma_0)}
\end{split}
\end{equation}
for all $g \in L^2(Q)$, where $\varphi(g) \in C^0([0,T];H^{1}_{\Gamma_0}(\Omega))\cap C^1([0,T]; L^2(\Omega)) $ is  the weak solution of the following backward in time problem
\begin{equation*}
\begin{cases}
\varphi_{tt}-\Delta \varphi = g  &\text{ in }  Q,\\
\partial_{\nu}\varphi =0   & \text{ on }  \Sigma_c,\\
\varphi = 0                       &\text{ on }    \Sigma_0,\\
\varphi(T)=0,\quad \varphi_t(T)=0 &\text{ on } \Omega.
\end{cases}
\end{equation*}
\end{definition}
We have the following existence and regularity results from \cite{MR1108480,MR1133544,MR2149171} for the solution of \eqref{e84}.
\begin{lemma}
For every  $(y^1_0,y^2_0,u,f) \in  L^2(\Omega) \times (H^1_{\Gamma_0}(\Omega))^* \times L^2(\Sigma_c) \times L^2(0,T; (H^{1}_{\Gamma_0}(\Omega))^*)$, equation \eqref{e84} admits a unique very weak solution $y (y^1_0,y^2_0,u,f)$ in the space $C^0([0,T];L^2(\Omega))\cap C^1([0,T]; (H^1_{\Gamma_0}(\Omega))^*)$ satisfying
\begin{equation}
\label{e88}
\begin{split}
\|y\|_{C^{0}([0,T]; L^2(\Omega))}&+\|y_t\|_{C^{0}([0,T];(H^1_{\Gamma_0}(\Omega))^*)} \\
 &\leq c  \left( \|y^1_0\|_{L^2(\Omega)}+\|y^2_0\|_{(H^{1}_{\Gamma_0}(\Omega))^*}+\|u\|_{L^2(\Sigma_c)}+ \|f\|_{L^2(0,T;(H^{1}_{\Gamma_0}(\Omega))^*)} \right),
\end{split}
\end{equation}
where the constant $c_1$ is independent of $y^1_0$, $y^2_0$, $u$, and $f$. Moreover, the solution operator  $L : L^2(\Sigma_c) \to C^0([0,T];H^{\frac{1}{2}}_{\Gamma_0}(\Omega))\cap C^1([0,T]; H^{-\frac{1}{2}}(\Omega))$ defined by $u \mapsto y(0,0,u,0)$  is bounded. Furthermore, the mapping $\Pi :  L^2(\Omega) \times (H^1_{\Gamma_0}(\Omega))^* \times L^2(0,T; (H^{1}_{\Gamma_0}(\Omega))^*) \to C^0([0,T];L^2(\Omega))\cap C^1([0,T]; (H^1_{\Gamma_0}(\Omega))^*)$ defined by $(y^1_0,y^2_0,f) \mapsto y(y^1_0,y^2_0,0,f)$ is  continuous.
\end{lemma}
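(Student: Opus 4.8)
The plan is to construct $y$ by the transposition (duality) method of Lions and Magenes, to split the data by the linearity of \eqref{e84} as $y(y^1_0,y^2_0,u,f) = \Pi(y^1_0,y^2_0,f) + Lu$, and to quote the sharp trace (hidden) regularity for second-order hyperbolic equations for the part carrying the boundary datum $u$. First I would record the well-posedness of the backward adjoint problem appearing in the definition \eqref{e89}: for each $g \in L^2(Q)$, by standard Galerkin/semigroup arguments for the wave equation with homogeneous Neumann conditions on $\Sigma_c$ and homogeneous Dirichlet conditions on $\Sigma_0$, there is a unique finite-energy solution $\varphi = \varphi(g) \in C^0([0,T];H^1_{\Gamma_0}(\Omega)) \cap C^1([0,T];L^2(\Omega))$ with
\[
\|\varphi\|_{C^0([0,T];H^1_{\Gamma_0}(\Omega))} + \|\varphi_t\|_{C^0([0,T];L^2(\Omega))} \le c\,\|g\|_{L^2(Q)}.
\]
Since $H^1_{\Gamma_0}(\Omega)$ has a bounded spatial trace into $H^{\frac{1}{2}}(\partial\Omega) \hookrightarrow L^2(\partial\Omega)$, the term $(u,\varphi)_{L^2(\Sigma_c)}$ is already meaningful, with $\varphi|_{\Sigma_c}$ bounded in $L^2(\Sigma_c)$ by $c\|g\|_{L^2(Q)}$; no sharp trace estimate for the adjoint is needed at this stage.

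Next, the right-hand side of \eqref{e89}, viewed as the map
\[
g \longmapsto \langle f,\varphi(g)\rangle + (y^1_0,\varphi_t(g)(0))_{L^2(\Omega)} - \langle y^2_0,\varphi(g)(0)\rangle - (u,\varphi(g))_{L^2(\Sigma_c)},
\]
is, by the energy bound above, a bounded linear functional on $L^2(Q)$ whose norm is controlled by $\|y^1_0\|_{L^2(\Omega)} + \|y^2_0\|_{(H^1_{\Gamma_0}(\Omega))^*} + \|u\|_{L^2(\Sigma_c)} + \|f\|_{L^2(0,T;(H^1_{\Gamma_0}(\Omega))^*)}$. The Riesz representation theorem then furnishes a unique $y \in L^2(Q)$ satisfying \eqref{e89}; this is the very weak solution, and uniqueness together with the $L^2(Q)$-bound is immediate, since a solution with vanishing data pairs to zero against every $g \in L^2(Q)$. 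To promote the time regularity to $C^0([0,T];L^2(\Omega)) \cap C^1([0,T];(H^1_{\Gamma_0}(\Omega))^*)$ and to obtain \eqref{e88}, I would first verify the estimate for smooth data by energy/multiplier identities and then pass to the limit by density, which amounts to identifying the transposition solution with the mild solution of the first-order system generated by $A$.

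For the decomposition itself, the map $\Pi : (y^1_0,y^2_0,f) \mapsto y(y^1_0,y^2_0,0,f)$ is handled by classical linear semigroup theory: the wave operator with the prescribed homogeneous boundary conditions generates a $C_0$-group on the energy-shifted space $L^2(\Omega) \times (H^1_{\Gamma_0}(\Omega))^*$, the forcing satisfies $f \in L^2(0,T;(H^1_{\Gamma_0}(\Omega))^*) \subset L^1(0,T;(H^1_{\Gamma_0}(\Omega))^*)$, and the variation-of-constants formula produces a solution in $C^0([0,T];L^2(\Omega)) \cap C^1([0,T];(H^1_{\Gamma_0}(\Omega))^*)$ depending continuously on $(y^1_0,y^2_0,f)$. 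The remaining claim, namely boundedness of $L : L^2(\Sigma_c) \to C^0([0,T];H^{\frac{1}{2}}_{\Gamma_0}(\Omega)) \cap C^1([0,T];H^{-\frac{1}{2}}(\Omega))$, is precisely the sharp trace regularity estimate for Neumann boundary data, and here I would simply invoke \cite{MR1108480,MR1133544,MR2149171}. Adding the two contributions yields \eqref{e88}, the continuity of $\Pi$, and the boundedness of $L$.

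The main obstacle is exactly this last point: the transposition argument by itself only produces $Lu$ in a negative-order interior Sobolev space, and the $H^{\frac{1}{2}}$-gain in space, as well as the optimal exponent (which in general is geometry-dependent), requires microlocal and multiplier techniques that go well beyond a routine calculation; this is why the statement is quoted from the hyperbolic trace-regularity literature rather than derived here. Everything else, namely well-posedness of the adjoint problem, the Riesz step, uniqueness, and the semigroup treatment of $\Pi$, is standard.
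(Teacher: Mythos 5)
Your proposal cannot be compared against a proof in the paper because the paper does not prove this lemma at all: it is stated verbatim as a result quoted from \cite{MR1108480,MR1133544,MR2149171}, with no argument supplied. That said, your sketch is a faithful reconstruction of how the result is actually established in those references, and it is essentially correct as an outline: transposition against the backward adjoint problem plus Riesz representation gives existence and uniqueness of $y\in L^2(Q)$ satisfying \eqref{e89}; the homogeneous-boundary part $\Pi$ is handled by the variation-of-constants formula for the wave group on $L^2(\Omega)\times(H^1_{\Gamma_0}(\Omega))^*$; and the decomposition $y=\Pi(y^1_0,y^2_0,f)+Lu$ reduces everything to the boundary-control part. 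You also correctly isolate the one genuinely deep ingredient, namely that $L$ maps $L^2(\Sigma_c)$ boundedly into $C^0([0,T];H^{1/2}_{\Gamma_0}(\Omega))\cap C^1([0,T];H^{-1/2}(\Omega))$: neither the spatial $H^{1/2}$-gain nor even the continuity in time with values in $L^2(\Omega)$ (needed for \eqref{e88}) follows from the naive transposition or energy estimates, and both require the sharp (hidden) trace regularity theory of Lasiecka--Triggiani, which you, like the authors, import as a black box. The only caveat is that your intermediate step of ``verifying \eqref{e88} for smooth data by energy/multiplier identities and passing to the limit'' is slightly optimistic for the $u$-dependent part, since the uniform-in-$u$ constant there is precisely the quoted sharp estimate and not a routine identity; but since you defer to the literature for exactly that point, the overall logic is sound and consistent with the paper's (purely citational) treatment.
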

%Turning to the optimal control problem \eqref{e85}-\eqref{e84}, existence and uniqueness of the solution can be proven by using the standard method of the  calculus of variation \cite{MR0271512}. That is, first we consider a minimizing sequence. This  sequence exists since the objective function is bounded from the below. Moreover, the sequentially weakly compactness of this sequence and  the corresponding sequence of solution follow from the coercivity of the objective function and the estimate \eqref{e88}.  Finally, the existence of solutions is shown by passing the limit in the very weak formulation \eqref{e89} and using the fact that the objective function is weakly lower semi-continuous. Uniqueness follows from the strictly convexity of objective function.
By considering the following continuous embeddings
\begin{equation*}
\begin{split}
i_1: C^0([0,T];H^{\frac{1}{2}}_{\Gamma_0}(\Omega)) \hookrightarrow L^2(Q),  \qquad i_2: C^0([0,T]; L^2(\Omega)) \hookrightarrow	L^2(Q),
\end{split}
\end{equation*}
and the continuous operator $\delta_T : C^0([0,T]; L^2(\Omega)) \to L^2(\Omega)$ defined by $y \mapsto y(T)$, we can  rewrite the optimal control problem \eqref{e85}-\eqref{e84} in the form \eqref{e86},  where $\mathcal{H}:= L^2(\Sigma_c)$, $\mathcal{X}:=L^2(Q)\times L^2(\Omega)$, and the linear operator $\mathcal{L}: \mathcal{H} \to \mathcal{X}$ and  $\psi \in \mathcal{X}$ are defined as follows
\begin{equation}
\label{e87}
\mathcal{L}u := \begin{pmatrix}
\alpha_1(i_1 \circ L) (u)\\
\alpha_2(\delta_T \circ L) (u) 
\end{pmatrix},  \quad
\psi := \begin{pmatrix}
\alpha_1y_d - \alpha_1(i_2 \circ \Pi) (y^1_0,y^2_0,f)\\
\alpha_2z_d-\alpha_2(\delta_T \circ \Pi) (y^1_0,y^2_0,f) 
\end{pmatrix}.
\end{equation}
Similarly to the previous subsection, the optimal control problem \eqref{e85}-\eqref{e84},  can be also  rewritten in the form of \eqref{QP}, where  $\mathcal{A}:=\mathcal{L^*L}+\beta \mathcal{I} $  with $\mathcal{L}^* :\mathcal{X}^* \to \mathcal{H}$, and $b:=\mathcal{L}^*\psi$.  In addition, due the fact that  the operator $\mathcal{A}$ is uniformly positive,  bounded,  and self-adjoint, the existence and uniqueness  of the solution to optimal control problem \eqref{e85}-\eqref{e84} can be justified due the fact that  $\mathcal{A}$ has a bounded inverse.

\begin{remark}
In the optimal control problem \eqref{e85}-\eqref{e84}, the operator $\mathcal{A}$ is a compact perturbation of the identity, since $\mathcal{L}: \mathcal{H} \to \mathcal{H}$ is compact. Indeed, due to \cite{MR916688}[Corollary 5.], the continuous embedding from the space $C^0([0,T];H^{\frac{1}{2}}_{\Gamma_0}(\Omega))\cap C^1([0,T]; H^{-\frac{1}{2}}(\Omega))$ to the space $C^0([0,T];L^2(\Omega))$ is compact and this implies the compactness of $\delta_T \circ L$ and $i_1 \circ L$. Therefore, due to \eqref{e87}, $\mathcal{L}$ is compact with respect to  the product topology $L^2(Q)\times L^2(\Omega)$.
%Note that, the compactness  of $\mathcal{L}$ depends on the regularity of the solution and the spaces of tracking terms in the objective function $J(u,y)$. For instance, if we replaced the terminal cost $\frac{\alpha_2}{2}\|y(T)-z\|^2_{L^2(\Omega)}$  by  $\frac{\alpha_2}{2}\|y_t(T)-z\|^2_{H^{-\frac{1}{2}}(\Omega)}$, or if the control entered as Dirichlet boundary conditions instead of Neumann boundary conditions,  it would be not clear whether $\mathcal{A}$ is a compact perturbation of the identity.
\end{remark}

Now assume that $u^* \in \mathcal{H}$ is the optimal solution of the optimal control problem \eqref{e85}-\eqref{e84}. Then,  the first-order optimality condition \eqref{e39} can be expressed as \eqref{e90} where the operator $\mathcal{L}$ and the function $\psi$ were defined in \eqref{e87}. Moreover, it can be shown (see \cite{MR2801222,MR0271512,MR2124277}) that \eqref{e90} is equivalent to the condition $\beta u^* =  p^* $ on $\Sigma_c$, where  $p^* \in C^1([0,T];L^2(\Omega))\cap C^0([0,T]; H^1_{\Gamma_0}(\Omega))$ is the weak solution of the following linear wave equation
\begin{equation*}
\begin{cases}
p^*_{tt}-\Delta p^* = -\alpha_1(y^*-y_d)  &\text{ in }  Q,\\
\partial_{\nu} p^* =0   & \text{ on }  \Sigma_c,\\
p^*= 0                       &\text{ on }    \Sigma_0,\\
p^*(T)=0,\quad  p^*_t(T)=\alpha_2(y^*(T)-z_d) &\text{ on } \Omega,
\end{cases}
\end{equation*}
and $y^*= y(y^1_0,y^2_0,f,u^* )$ is the very weak solution of \eqref{e84}.
\subsection{Distributed Optimal Control for the Burgers Equation}
Here we consider the following optimal control problem which consists of minimizing the performance index
\begin{equation}
\label{e98}
J(y,u):=\frac{\alpha_1}{2}\|y-y_d\|_{L^2(Q)}^2+ \frac{\alpha_2}{2}\|y(T)-z_d\|_{L^2(0,1)}^2 + \frac{\beta}{2}\|u\|_{L^2(\hat{Q})}^2,
\end{equation}
subject to the Burgers equation with homogeneous Dirichlet boundary condition.
\begin{equation}
\label{e99}
\begin{cases}
y_t-\vartheta y_{xx}+yy_x=Bu+f, \quad (t,x)\in Q ,\\
      y(t,0)=y(t,1)=0, \quad t \in (0,T),\\
      y(0,x)=y_0(x),\quad x \in (0,1).\\
\end{cases}
\end{equation}
where $\vartheta$, $\beta$ $\alpha_1$, $\alpha_2$, and  $T$ are positive constants, $y(t)=y(t,x) , u(t)=u(t,x)$,  $Q:=(0,T)\times(0,1)$, and $\hat{Q}=(0,T)\times \hat{\Omega}$ where $\hat{\Omega}$ is an open subset of  $(0,1)$. Moreover,  $y_0 \in L^2(0,1)$, $f \in L^2(0,T;H^{-1}(0,1))$, the desired states $y_d$ and $z_d$  are smooth enough,  and the extension operator $B \in \mathcal{L}(L^2(\hat{\Omega}),L^2(0,1))$ is defined by
\begin{equation*}
(Bu)(x) = \begin{cases} u(x), & x \in \hat{\Omega},\\
                        0     & x \in  (0,1)\backslash\hat{\Omega}.
                        \end{cases}
\end{equation*}
Considering the space
\begin{equation*}
W(0,T):=\{\phi: \phi \in L^2(0,T;H_0^1(0,1)) , \phi_t \in L^2(0,T;H^{-1}(0,1))\}.
\end{equation*}
as the space of solutions,  we have the following  notion of weak solution.
\begin{definition}
Let  $(y_0,u,f) \in  L^2(0,1) \times L^2(\hat{Q}) \times L^2(0,T;H^{-1}(0,1)) $ be given.  Then,  a function $y \in W(0,T)$ is referred as a weak solution to \eqref{e99} if $y(0)=y_0$ is satisfied in $L^2(0,1)$ and for almost every $t \in (0,T)$,  the following equality
\begin{equation*}
\langle y_t(t),\varphi \rangle_{H^{-1},H^1_0}+\vartheta( y(t),\varphi)_{H^1_0}+b(y(t),y(t),\varphi)=\langle Bu(t)+f,\varphi \rangle_{H^{-1},H^1_0} \quad \text{ for all }\varphi \in H^1_0(0,1)
\end{equation*}
holds, where the continuous trilinear form $b: H^1_0(0,1)\times H^1_0(0,1)\times H^1_0(0,1) \to \mathbb{R}_+$ is defined as
\begin{equation*}
b(\varphi,\psi,\phi)=\int_0^1\varphi\psi_x\phi dx \quad \text{ for all } \varphi, \psi, \phi \in H^1_0(0,1).
\end{equation*}
\end{definition}
It is known that, for every triple $(y_0,u,f) \in  L^2(0,1) \times L^2(\hat{Q}) \times L^2(0,T;H^{-1}(0,1))$, equation \eqref{e99} admits a unique weak solution $y(y_0,u,f)  \in W(0,T)$ and for this weak solution we have the following estimate
\begin{equation}
\label{e108}
\|y\|_{W(0,T)} \leq C   \left(   \|y_0\|_{L^2(\Omega)}+\|u\|_{L^2(\hat{Q})}+\|f\|_{L^2(0,T;H^{-1}(0,1))}  \right)^2,
\end{equation}
where the constant $C$ depends only on $T$ and $\vartheta$. Now, by setting $X= W(0,T)\times \mathcal{H}$ with $\mathcal{H} := L^2(\hat{Q})$, and $Y:=L^2(0,T;H^{-1}(0,1))\times L^2(0,1)$, we define $e: X \to Y$ by
\begin{equation*}
e(y,u):=\begin{pmatrix}y_t-\vartheta y_{xx}+yy_x-Bu-f\\
                       y(0)-y_0
\end{pmatrix}.
\end{equation*}
The mapping $e: X \to Y$ consists of a sum of continuous linear terms and a continuous bilinear term. Hence it can be shown that  it is infinitely Fr\'echet differentiable.  Moreover due to the unique solvability of \eqref{e99}, for every $u \in \mathcal{H}$ there exists a unique element $y = y(u)\in W(0,T)$ satisfying $e(y(u),u) =0$ and estimate \eqref{e108} holds. Therefore the control-to-state  $u \in \mathcal{H}  \mapsto y(u) \in W(0,T)$ is well-defined. Then we can rewrite the optimal control problem \eqref{e98}-\eqref{e99} in the following form
\begin{equation}
\label{Opburger}
 \min_{u \in \mathcal{H}}\mathcal{F}(u) = \min_{u \in \mathcal{H}}J(y(u),u) =\min_{(y,u)\in X } \{ J(y,u) : \text{ subject  to } e(y,u)=0 \}.
\end{equation}
Further, due to estimate \eqref{e108} and  the compact embedding from the space $W(0,T)$ to the space $L^2(Q)$, it follows from standard subsequential limit arguments that the optimal control problems \eqref{e98}-\eqref{e99} admits a solution, see e.g., \cite{MR1872392,MR1818917}. Before dealing with the  optimality conditions, we refer to the following  linearized Burgers equation at $y \in W(0,T)$ and its corresponding backward in time  adjoint equation
\begin{center}
\nopagebreak
\scalebox{0.8}{
\hspace{-3em}
\noindent\begin{minipage}{.6\columnwidth}
\begin{equation}
\label{e137}
\begin{cases}
q_t-\vartheta q_{xx} + (yq)_x= \phi  & (t,x)\in Q ,\\
q(t,0)=q(t,1)=0 & t \in (0,T),\\
q(0)=q_0  &x \in (0,1),\\
\end{cases}
\end{equation}
\end{minipage}%
\hspace{-0em}
\begin{minipage}{.64\columnwidth}
\begin{equation}
\label{e138}
\begin{cases}
-p_t-\vartheta p_{xx}-yp_x=\psi &(t,x)\in Q ,\\
      p(t,0)=p(t,1)=0 & t \in (0,T),\\
      p(T,x)=p_T   & x \in (0,1).\\
\end{cases}
\end{equation}
\end{minipage}}
\nopagebreak\vspace{\belowdisplayskip}
\end{center}
It can be shown that for every pairs $(\phi,q_0)$  and  $(\psi,p_T)$ in the space  $Y$, the solution operators $\mathcal{S}^y_{lin}: Y \to W(0,T)$ of \eqref{e137}, and $\mathcal{S}^y_{adj} : Y\to W(0,T)$  of \eqref{e138} defined by  $(\phi,q_0) \mapsto v$ and $(\psi,p_T) \mapsto p$, respectively, are well-defined and continuous. See e.g., \cite{MR1872392,MR1818917}.

Due to the definitions of $\mathcal{S}^{y(u)}_{lin}$ and $e_y(y(u),u)$, we can infer that  $e^{-1}_y(y(u),u) = \mathcal{S}^{y(u)}_{lin}$ and, as consequence, $e_y(y,u)$ is continuously invertible. In addition, since $e$ is infinitely continously Fr\'echet differentiable \cite{MR2516528}, the implicit function theorem implies that the control-to-state operator $u \mapsto y(u)$ is infinitely continuously Fr\'echet differentiable and its Fr\'echet derivatives of all orders are Lipschitz continuous on bounded sets.
Now we are in the position to derive the first-order optimality conditions.  First, by using the implicit function theorem, the first derivative of the mapping $u \mapsto y(u)$ at $u$ in direction of an arbitrary $\delta u \in \mathcal{H}$ is given by
\begin{equation}
\label{e139}
y'(u)\delta u = -e^{-1}_y(x)e_u(x)\delta u,
\end{equation}
where $x:=(y(u),u)\in X$. Then, by the chain rule we obtain
\begin{equation*}
\mathcal{F}'(u)\delta u  = (\mathcal{G}(u),\delta u) = ((y'(u))^*J_{y}(x)+J_{u}(x),\delta u),
\end{equation*}
where $(y'(u))^*$ stands for the adjoint operator of $y'(u)$. Since $\delta u$ is arbitrary, the first-order optimality condition $\eqref{e39}$ can be written as
\begin{equation}
\label{e140}
\mathcal{G}(u^*) = J_{u}(x^*)-e^{*}_u(x)e^{-*}_y(x)J_{y}(x^*)=0.
\end{equation}
where $x^*: = (y(u^*),u^*)$. Moreover,  by setting $(p^*,\bar{p} ):=-e^{-*}_y(x)J_{y}(x^*)$ with $(p^*,\bar{p} )\in Y^* $ and  $\bar{p}=p^*(0)$, the first-order optimality condition \eqref{e140} can be expressed as the following system of differential equations
\begin{equation*}
\begin{cases}
B^*p^*=\beta u^*    \quad \text{ in }  L^2(\hat{Q}), \\
p^* = \mathcal{S}_{adj}^{y(u^*)}(-\alpha_1(y(u^*)-y_d),-\alpha_2((y(u^*))(T)-z_d)). \\
\end{cases}
\end{equation*}
Next, we  compute the second derivative of $\mathcal{F}$. Let $(\delta u, \delta v) \in \mathcal{H} \times \mathcal{H}$ be arbitrary,  then using the implicit functions theorem, the second derivative of the operator $u \mapsto y(u)$ from $\mathcal{H}$ to $W(0,T)$ can be written as
\begin{equation}
\label{e145}
y''(u)(\delta u, \delta v)=-e^{-1}_y(x)e_{yy}(x)(y'(u)\delta u,y'(u) \delta v).
\end{equation}
 Now, by using the chain rule and \eqref{e145} as in \cite{MR1871460,MR2516528}, we obtain
 \begin{equation}
 \label{e146}
 \begin{split}
 \mathcal{F}''(u)&(\delta u,\delta v) = \langle J_{yy}(x)y'(u)\delta u, y'(u)\delta v\rangle \\
 &+\langle -e^{-*}_y(x)J_y(x),e_{yy}(x)(y'(u)\delta u,y'(u) \delta v)\rangle +\langle J_{uu}(x)\delta u, \delta v\rangle_{\mathcal{H}}.
 \end{split}
 \end{equation}
Furthermore,  due to the first estimate  in \eqref{e146} and the fact that $J: \mathcal{H}\times W(0,T) \to \mathbb{R}$ and the control-to-state operator are infinitely Fr\'echet differentiable, it follows, clearly, that $\mathcal{F}'': \mathcal{H} \to \mathcal{L}(\mathcal{H},\mathcal{L}(\mathcal{H},\mathbb{R}))$ is locally Lipschitz continuous. Then, the uniformly positiveness of $\mathcal{F}''(u^*)$ can be expressed as
 \begin{equation}
 \label{e149}
 \begin{split}
 \mathcal{F}''(u^*)(v, v):&= \alpha_1\| q^*\|^2_{L^2(Q)}+\alpha_2\| q^*(T)\|^2_{L^2(0,1)}+(p^*,2q^*q^*_x)_{L^2(Q)}+\beta \| v\|_{\mathcal{H}}^2 \\
                                     &\geq \delta_{\inf} \| v\|_{\mathcal{H}}^2           \quad \text{ for all } v \in \mathcal{H},
 \end{split}
 \end{equation}
where $\delta_{\inf}>0$, $p^* := \mathcal{S}_{adj}^{y(u^*)}(-\alpha_1(y(u^*)-y_d),-\alpha_2((y(u^*))(T)-z_d)$ and $q^*:=\mathcal{S}^{y(u^*)}_{lin}(v,0)$.
\begin{remark}
Clearly, the only term in \eqref{e149} that can spoil the uniformly positiveness of  $\mathcal{F}''(u^*)$ is the term involving $p^*$. This term originates from the nonlinear convection term in the state equation. Since
\begin{equation*}
\abs{(p^*,2q^*q^*_x)_{L^2(Q)}} \leq c\| p^*\|_{L^2(0,T;L^{\infty}(0,1))}\| q^*\|^2_{W(0,T)}
\end{equation*}
for a constant $c$, the uniformly positiveness of  $\mathcal{F}''(u^*)$ holds, provided that  $\| p^*\|_{L^2(0,T;L^{\infty}(0,1))}$  is small enough. Indeed, for  $p^* =0$, inequality \eqref{e149} holds for $\delta_{\inf}:=\beta$ . For instance, by setting $y_d = z_d = f = 0$, inequality \eqref{e149} holds for every initial function $y_0$  with sufficiently small $\|y_0\|_{L^2(0,1)}$.
\end{remark}

\section{Numerical Experiments}
In order to validate our theoretical findings in the previous sections,  we report numerical results corresponding to the optimal control problems introduced in the  previous section. We investigate the application of Algorithm \ref{BBa} with respect to  different strategies for selecting step-sizes and  different choices of the discretization parameter $h$,  the control cost parameter $\beta$, and the tolerance $\epsilon$ in the termination condition \eqref{terminCond}.   For Algorithm \ref{BBa}, we consider the cases:
\begin{description}
\item[BB1: ] $\alpha_k :=\alpha^{BB1}_k$ for every  $k\geq1$.
\item[BB2: ] $\alpha_k :=\alpha^{BB2}_k$ for every  $k\geq1$.
\item[ABB: ]$\alpha_k :=\begin{cases}\alpha^{BB1}_k  \text{ for odd } k\geq1,\\
\alpha^{BB2}_k  \text{ for even } k\geq1.\\
\end{cases}$
\end{description}
The last case, which is known as the alternating strategy, has already been introduced  by e.g., \cite{MR1901255,MR1937087}  in the context of finite-dimensional unconstrained optimization.  Moreover, \cite{MR2129700} reports numerical results for the case of finite-dimensional  bound-constrained optimization problems which show that projected ABB works somewhat better than projected BB1. According to \eqref{e150} the value  $\beta$ in all the optimal control problems of the previous section has a direct influence on the spectral condition number of $\mathcal{A}^{\mathcal{F}}_{u^*}$ corresponding to  $\mathcal{F}$.  To be more precise, as the value of $\beta$ increases, the value of $\kappa(\mathcal{A}^{\mathcal{F}}_{u^*})$ is getting smaller. Therefore, as its has been discussed in Remarks \ref{Remark5} and \ref{Remark6}, one expects a larger total number of iterations for a smaller value of $\beta$ and a fixed tolerance $\epsilon$.  Moreover, according to  Remark \ref{Remark4}, the number $\ell$ depends on the behaviour (monotonicity versus nonmonotonicity) of $\{\|\mathcal{G}_k\|\}_k$, and consequently  also on  $\kappa( \mathcal{A}^{\mathcal{F}}_{u^*})$. Hence, the smaller $\beta$ is chosen, the larger the value of $\ell$ is expected to be.  We report the total number of iteration of the optimization Algorithm for different levels of discretization, or equivalently, different values of mesh-sizes. Then, for every example and fixed tolerance $\epsilon$, $\ell$ is reported as the maximum of the pairwise differences of $k_h(\epsilon)$ for different choices of $h$.  We have chosen $u_0 = 0$ and $\alpha_0 = 1$ ($u_1:= -\mathcal{G}(0)$) as the initial iterates. All computations were done in the MATLAB platform.

\begin{example}[Dirichlet optimal control for the Poisson equation]
\label{exp1}
We consider the problem introduced in Subsection \ref{DirichletPoisson} which is posed on the domain $\Omega :=(0,1)^2$. For the  discretization a uniform mesh was generated by triangulation. Then over this mesh, the discretization was done by a conforming linear finite
element scheme using continuous piecewise linear basis functions as described in Subsubsection  \ref{subsubdis}. We set $ f(x)= 10\sin(\pi(x_1+x_2))$  and  $y_d(x) =(x^2_1+x^2_2)^{\frac{1}{3}}$ where $x := (x_1,x_2) \in \Omega$.  Table \ref{table1} shows the number of required iterations $k^*_h(\epsilon)$ for different step-size strategies, and different values of  $\beta$, $\epsilon$,  and the mesh-size $h$.  From Table \ref{table1}, it can be observed that:
\begin{enumerate}
\item For every fixed $h$, $\epsilon$, and choice of step-size, decreasing in the value of $\beta$ implies that the number of required iterations $k^*_h(\epsilon)$ becomes larger and, thus, the convergence is getting slower.  This is in accordance with the fact that there is a trade-off  between the magnitude  of $\beta$ and  the value of  $\kappa(\mathcal{A})$ where $\mathcal{A} = \mathcal{L}^*\mathcal{L}+\beta \mathcal{I}$ with  $\mathcal{L}$ specified in Subsubsection \ref{SubDirichletPoisson}.  More precisely,  $\kappa(\mathcal{A})= \frac{\beta +\delta_{\sup} }{\beta +\delta_{\inf}}$ with  $\delta_{\inf}:=\inf(\sigma(\mathcal{L}^*\mathcal{L}))$ and $\delta_{\sup} :=\sup(\sigma(\mathcal{L}^*\mathcal{L}))$. Hence a larger value of $\beta$ yields a smaller value of $\kappa({\mathcal{A}})$.  That is as expected from the theory, for a larger $\beta$  Algorithm \ref{BBa} requires fewer iterations $k^*_h(\epsilon)$ for every fixed $h$ and  $\epsilon$. This behaviour is clearly illustrated in Figure \ref{Fig1} which depicts  the convergence of  $\| \mathcal{G}^h_k \|_h$  for the choice of BB1 step-sizes, $h =2^{-9}\sqrt{2}$, and different values of $\beta$. As can be seen from Figure $\ref{Fig1}$, the convergence for the cases $\beta=0.5$ and $\beta=0.2$ is Q-linear. For these cases we might conjecture that $\kappa(\mathcal{A}) < 2$ with a smaller value of  convergence rate $\gamma_{\mathcal{A}}$ for $\beta=0.5$ compared to $\beta = 0.2$.  However, for the rest of the cases,  nonmonotonic behaviour occurs, which corresponds to $\kappa(\mathcal{A})\geq 2$.   Apparently, as $\beta$ decreases, the nonmonotonic behaviour in the sequences $\{\|\mathcal{G}_k\| \}_k$ and, consequently, in  $\{\|\mathcal{G}^h_k\|_h \}_k$  becomes stronger. As discussed in Remarks \ref{Remark5} and \ref{Remark6},  if $\kappa(\mathcal{A})$  becomes larger, then the changes in the decreasing components $|g^k_i|$ (for instance $i \in \{0\}\cup \{i: i\geq n^{u}\}$) are getting smaller compared to the nondecreasing components. This explains why a decrease in the value $\beta$ leads to an increase in nonmonotonicity.

\item  Mesh-independence can be observed from Table \ref{table1}. More precisely, we can see that for every fixed  $\beta$, $\epsilon$, and step-size strategy,
the iterations $k_h(\epsilon)$ stay almost constant and do not change as the discretezation levels changes. Moreover, for $\beta = 0.2$, $ \beta= 0.05$, and  $\beta = 0.01$ we can state that  $\ell \approx 1$, $\ell \approx 3$, and $\ell \approx 6$, respectively. This is also due to the dependence of the spectrum of $\mathcal{A}= \mathcal{L^*L}+\beta \mathcal{I}$ on the magnitude of $\beta$ (see Remark \ref{Remark4}).
\end{enumerate}

%%%%%%%%%%%%%%%%%%%%%%%%%%%%%%%%%%%%%%%%%%%%%%%%%%%%%%%%
\begin{figure}[htbp]
    \centering
        \includegraphics[height=5cm,width=7cm]{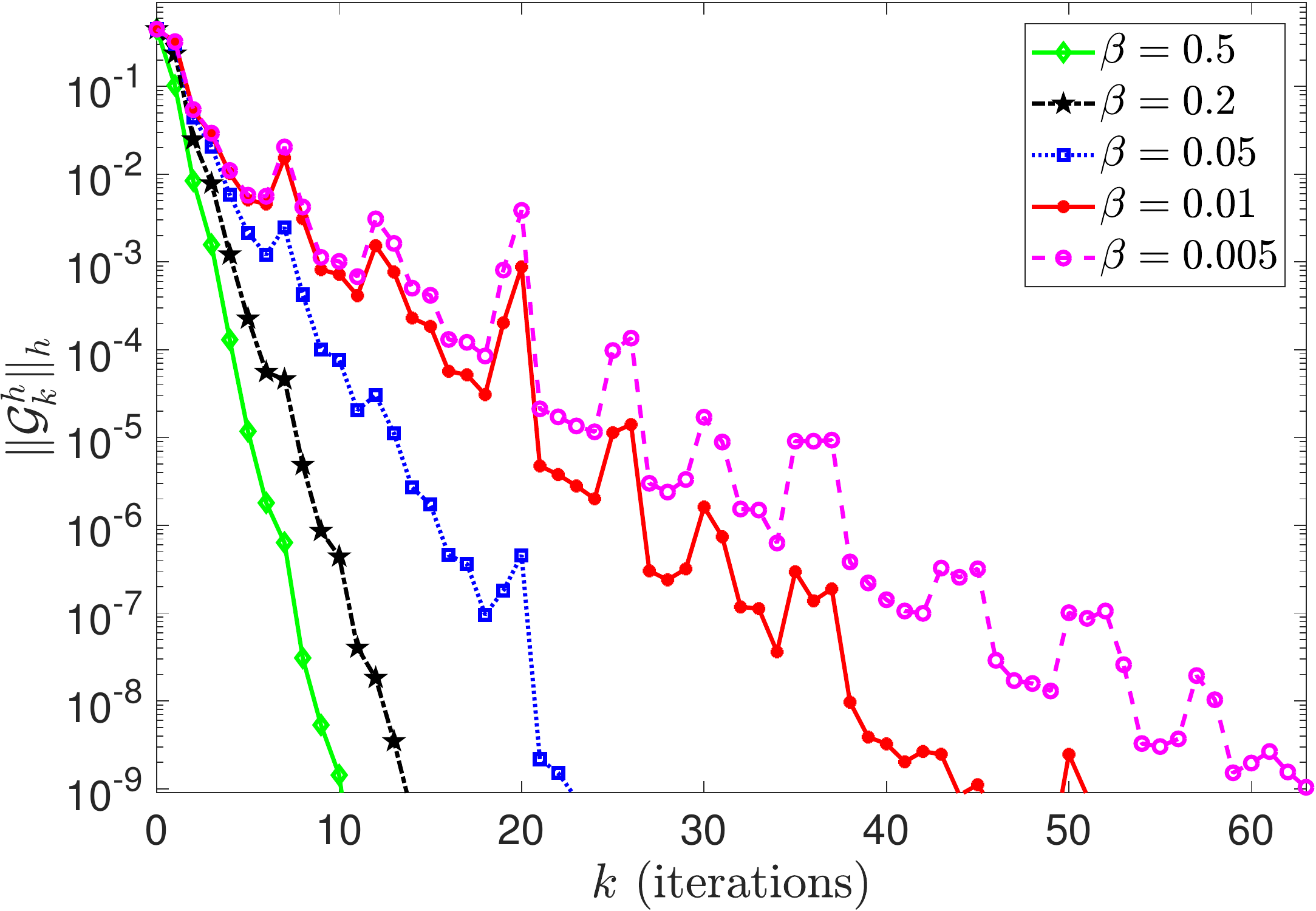}
\caption{Convergence of  $\| \mathcal{G}^h_k \|_h$  for Algorithm \ref{BBa} applied to Example \ref{exp1} with BB1 step-sizes, $h =2^{-9}\sqrt{2}$, and  for different choices of $\beta$}
 \label{Fig1}
\end{figure}
%%%%%%%%%%%%%%%%%%%%%%%%%%%%%%%%%%%%%%%%%%%%%%%%%%%%5
\begin{table}[htpb]
\begin{center}
\scalebox{0.7}{%
\begin{tabular}{ | c | c | c | c |  c |  c | c | c |}
\hline
\multicolumn{8}{|c|}{The number of required iteration $k^*_h(\epsilon)$} \\
\hline
\multicolumn{8}{|c|}{$\beta = 0.2$} \\
\hline
\multirow{6}{*}{BB1}&
\backslashbox{$\epsilon$}{$h$} & \footnotesize{$2^{-5}\sqrt{2}$} &\footnotesize{$2^{-6}\sqrt{2}$}&\footnotesize{$2^{-7}\sqrt{2}$}&\footnotesize{$2^{-8}\sqrt{2}$}&\footnotesize{$2^{-9}\sqrt{2}$}&\footnotesize{$2^{-10}\sqrt{2}$}\\
\cline{2-8}
&$1e-2$ & $3$ & $3$& $3$ & $3$ &$3$ & $3$ \\
\cline{2-8}
&$1e-4$ & $6$ & $6$& $6$ & $6$ &$6$&$6$ \\
\cline{2-8}
&$1e-6$ & $9$ & $9$& $9$ & $9$ &$9$&$9$ \\
\cline{2-8}
&$1e-8$ & $12$ & $13$& $13$ & $13$ &$13$ &$13$\\
\hline
\hline
\multirow{6}{*}{BB2}&
\backslashbox{$\epsilon$}{$h$} & \footnotesize{$2^{-5}\sqrt{2}$} &\footnotesize{$2^{-6}\sqrt{2}$}&\footnotesize{$2^{-7}\sqrt{2}$}&\footnotesize{$2^{-8}\sqrt{2}$}&\footnotesize{$2^{-9}\sqrt{2}$}&\footnotesize{$2^{-10}\sqrt{2}$}\\
\cline{2-8}
&$1e-2$ & $3$ & $3$& $3$ & $3$ &$3$  &$3$\\
\cline{2-8}
&$1e-4$ & $6$ & $6$& $6$ & $6$ &$6$ &$6$ \\
\cline{2-8}
&$1e-6$ & $9$ & $9$& $9$ & $9$ &$9$ &$9$ \\
\cline{2-8}
&$1e-8$ & $11$ & $12$& $12$ & $12$ &$12$ &$12$ \\
\hline
\hline
\multirow{6}{*}{ABB}&
\backslashbox{$\epsilon$}{$h$} & \footnotesize{$2^{-5}\sqrt{2}$} &\footnotesize{$2^{-6}\sqrt{2}$}&\footnotesize{$2^{-7}\sqrt{2}$}&\footnotesize{$2^{-8}\sqrt{2}$}&\footnotesize{$2^{-9}\sqrt{2}$}&\footnotesize{$2^{-10}\sqrt{2}$}\\
\cline{2-8}
&$1e-2$ & $3$ & $3$& $3$ & $3$ &$3$ &$3$ \\
\cline{2-8}
&$1e-4$ & $6$ & $6$& $6$ & $6$ &$6$ &$6$ \\
\cline{2-8}
&$1e-6$ & $9$ & $9$& $9$ & $9$ &$9$ & $9$ \\
\cline{2-8}
&$1e-8$ & $12$ & $12$& $13$ & $13$ &$13$ &$13$ \\
\hline
\hline
\multicolumn{8}{|c|}{$\beta = 0.05$} \\
\hline
\multirow{6}{*}{BB1}&
\backslashbox{$\epsilon$}{$h$} & \footnotesize{$2^{-5}\sqrt{2}$} &\footnotesize{$2^{-6}\sqrt{2}$}&\footnotesize{$2^{-7}\sqrt{2}$}&\footnotesize{$2^{-8}\sqrt{2}$}&\footnotesize{$2^{-9}\sqrt{2}$}&\footnotesize{$2^{-10}\sqrt{2}$}\\
\cline{2-8}
&$1e-2$ & $4$ & $4$& $4$ & $4$ &$4$  &$4$\\
\cline{2-8}
&$1e-4$ & $9$ & $9$& $9$ & $9$ &$10$ &$10$\\
\cline{2-8}
&$1e-6$ & $14$ & $16$& $16$ & $16$ &$16$ &$16$ \\
\cline{2-8}
&$1e-8$ & $21$ & $21$& $21$ & $21$ &$21$ &$21$ \\
\hline
\hline
\multirow{6}{*}{BB2}&
\backslashbox{$\epsilon$}{$h$} & \footnotesize{$2^{-5}\sqrt{2}$} &\footnotesize{$2^{-6}\sqrt{2}$}&\footnotesize{$2^{-7}\sqrt{2}$}&\footnotesize{$2^{-8}\sqrt{2}$}&\footnotesize{$2^{-9}\sqrt{2}$}&\footnotesize{$2^{-10}\sqrt{2}$}\\
\cline{2-8}
&$1e-2$ & $4$ & $4$& $4$ & $4$ &$4$&$4$ \\
\cline{2-8}
&$1e-4$ & $9$ & $9$& $9$ & $9$ &$9$&$9$ \\
\cline{2-8}
&$1e-6$ & $14$ & $15$& $15$ & $15$ &$15$&$15$ \\
\cline{2-8}
&$1e-8$ & $19$ & $21$& $21$ & $21$ &$22$ &$22$\\
\hline
\hline
\multirow{6}{*}{ABB}&
\backslashbox{$\epsilon$}{$h$} & \footnotesize{$2^{-5}\sqrt{2}$} &\footnotesize{$2^{-6}\sqrt{2}$}&\footnotesize{$2^{-7}\sqrt{2}$}&\footnotesize{$2^{-8}\sqrt{2}$}&\footnotesize{$2^{-9}\sqrt{2}$}&\footnotesize{$2^{-10}\sqrt{2}$}\\
\cline{2-8}
&$1e-2$ & $4$ & $4$& $4$ & $4$ &$4$&$4$ \\
\cline{2-8}
&$1e-4$ & $9$ & $9$& $9$ & $9$ &$9$ &$9$\\
\cline{2-8}
&$1e-6$ & $14$ & $14$& $15$ & $16$ &$16$&$16$ \\
\cline{2-8}
&$1e-8$ & $18$ & $21$& $21$ & $21$ &$21$&$21$ \\
\hline
\hline
\multicolumn{8}{|c|}{$\beta = 0.01$} \\
\hline
\multirow{6}{*}{BB1}&
\backslashbox{$\epsilon$}{$h$} & \footnotesize{$2^{-5}\sqrt{2}$} &\footnotesize{$2^{-6}\sqrt{2}$}&\footnotesize{$2^{-7}\sqrt{2}$}&\footnotesize{$2^{-8}\sqrt{2}$}&\footnotesize{$2^{-9}\sqrt{2}$}&\footnotesize{$2^{-10}\sqrt{2}$}\\
\cline{2-8}
&$1e-2$ & $4$ & $4$& $4$ & $5$ &$5$ &$5$ \\
\cline{2-8}
&$1e-4$ & $16$ & $16$& $16$ & $16$ &$16$ &$16$\\
\cline{2-8}
&$1e-6$ & $24$ & $28$& $27$ & $27$ &$27$ &$27$\\
\cline{2-8}
&$1e-8$ & $38$ & $39$& $38$ & $40$ &$38$&$39$ \\
\hline
\hline
\multirow{6}{*}{BB2}&
\backslashbox{$\epsilon$}{$h$} & \footnotesize{$2^{-5}\sqrt{2}$} &\footnotesize{$2^{-6}\sqrt{2}$}&\footnotesize{$2^{-7}\sqrt{2}$}&\footnotesize{$2^{-8}\sqrt{2}$}&\footnotesize{$2^{-9}\sqrt{2}$}&\footnotesize{$2^{-10}\sqrt{2}$}\\
\cline{2-8}
&$1e-2$ & $4$ & $4$& $5$ & $5$ &$5$ &$5$\\
\cline{2-8}
&$1e-4$ & $13$ & $15$& $15$ & $15$ &$15$&$15$ \\
\cline{2-8}
&$1e-6$ & $26$ & $26$& $30$ & $31$& $31$ &$32$ \\
\cline{2-8}
&$1e-8$ & $39$ & $44$& $43$ & $45$ &$45$  &$44$ \\
\hline
\hline
\multirow{6}{*}{ABB}&
\backslashbox{$\epsilon$}{$h$} & \footnotesize{$2^{-5}\sqrt{2}$} &\footnotesize{$2^{-6}\sqrt{2}$}&\footnotesize{$2^{-7}\sqrt{2}$}&\footnotesize{$2^{-8}\sqrt{2}$}&\footnotesize{$2^{-9}\sqrt{2}$}&\footnotesize{$2^{-10}\sqrt{2}$}\\
\cline{2-8}
&$1e-2$ & $4$ & $4$& $5$ & $5$ &$5$ &$5$ \\
\cline{2-8}
&$1e-4$ & $15$ & $15$  & $16$& $16$ & $16$ &$16$ \\
\cline{2-8}
&$1e-6$ & $24$ &$24$ & $28$& $28$ & $28$ &$28$ \\
\cline{2-8}
&$1e-8$ & $43$ & $38$& $43$ & $43$ &$43$ &$40$ \\
\hline
\end{tabular}}
\end{center}
\caption{Numerical results for Example \ref{exp1}}
 \label{table1}
\end{table}
To further study the behaviour of Algorithms \ref{BBa}, we consider Table \ref{table1b} which summarizes the values of  $\| \mathcal{G}^h_k \|_h $   for the choice of $\beta = 0.01$,  at the iterations $k = 37,\dots,45$,  and  different levels of discretization. It can be seen that in any case the sequence $\{\| \mathcal{G}^h_k \|_h \}_k $ has a nonmonotonic behaviour. For every case the members of $\{\| \mathcal{G}^h_k \|_h \}_k $ at which the monotonicity of the sequence is violated, are indicated by bold type. With the superscript star we denote the members corresponding to $k^*_h(\epsilon)$ with $\epsilon = 1e-8$.
\begin{table}[htpb]
\begin{center}
\scalebox{0.7}{%
\begin{tabular}{ | c | c |  c | c |  c |  c | c | c |}
\hline
\multicolumn{8}{|c|}{The value of $\| \mathcal{G}^h_k\|$ at an iteration $k$} \\
\hline
\multirow{10}{*}{BB1}&
\backslashbox{$k$}{$h$} & \footnotesize{$2^{-5}\sqrt{2}$} &\footnotesize{$2^{-6}\sqrt{2}$}&\footnotesize{$2^{-7}\sqrt{2}$}&\footnotesize{$2^{-8}\sqrt{2}$}&\footnotesize{$2^{-9}\sqrt{2}$}&\footnotesize{$2^{-10}\sqrt{2}$}\\
\cline{2-8}
&$37$ & $1.29e-8$              & $1.59e-8$& $3.96e-8$ & $5.25e-7$ &$1.90e-7$ & $1.98e-7$ \\
%\cline{2-8}
&$38$ & ${9.22e-9}^*$             & $1.05e-8$& ${4.26e-9}^*$ & $1.03e-7$ &${9.69e-9}^*$&$1.39e-8$ \\
%\cline{2-8}
&$39$ & $\mathbf{ 2.93e-8}$ & ${5.17e-9}^*$& $3.78e-9$ & $5.93e-8$ &$3.88e-9$&${8.75e-9}^*$ \\
%\cline{2-8}
&$40$ & $\mathbf{1.05e-7}$ & $1.38e-9$& $2.38e-9$ & ${7.21e-9}^*$ &$3.25e-9$ &$3.46e-9$\\
%\cline{2-8}
&$41$ & $1.43e-8$           & $1.08e-9$& $\mathbf{6.90e-9}$ & $4.30e-9$ &$2.02e-9$ & $1.90e-9$ \\
%\cline{2-8}
&$42$ & $\mathbf{1.46e-8}$ & $\mathbf{1.69e-9}$& $4.72e-9$ & $\mathbf{5.76e-9}$ &$2.66e-9$ & $1.28e-9$ \\
%\cline{2-8}
&$43$ & $7.26e-10$         & $\mathbf{1.26e-8}$& $3.74e-9$ & $1.12e-9$ &$2.47e-9$&$6.74e-9$ \\
%\cline{2-8}
&$44$ & $6.24e-10$         & $1.55e-9$& $2.63e-10$ &  $8.49e-10$ &$8.46e-10$ &$\mathbf{1.84e-8}$\\
%\cline{2-8}
&$45$ & $4.09e-10$         & $1.43e-9$& $1.74e-10$ & $7.33e-10$ &$\mathbf{1.11e-9}$ &$4.10e-10$\\
\hline
\hline
\multirow{10}{*}{BB2}&
\backslashbox{$k$}{$h$} & \footnotesize{$2^{-5}\sqrt{2}$} &\footnotesize{$2^{-6}\sqrt{2}$}&\footnotesize{$2^{-7}\sqrt{2}$}&\footnotesize{$2^{-8}\sqrt{2}$}&\footnotesize{$2^{-9}\sqrt{2}$}&\footnotesize{$2^{-10}\sqrt{2}$}\\
\cline{2-8}
&$37$ & $5.94e-8$ & $2.91e-8$& $7.59e-8$ & $3.38e-7$ &$7.83e-8$ & $2.40e-7$ \\
%\cline{2-8}
&$38$ & $2.52e-8$ & $2.71e-8$& $4.59e-8$ & $2.19e-7$ &$6.21e-8$&$1.95e-7$ \\
%\cline{2-8}
&$39$ & ${4.97e-9}^*$ & $1.78e-8$& $1.28e-8$ & $2.08e-7$ &$5.32e-8$&$3.46e-8$ \\
%\cline{2-8}
&$40$ & $4.06e-9$ & $\mathbf{2.87e-8}$& $\mathbf{1.38e-7}$ & $1.71e-7$ &$4.72e-8$ &$\mathbf{2.61e-7}$\\
%\cline{2-8}
&$41$ & $3.52e-9$ & $\mathbf{6.00e-8}$& $4.12e-8$ & $6.06e-8$ &$1.77e-8$ & $4.37e-8$ \\
%\cline{2-8}
&$42$ & $\mathbf{4.08e-9}$ & $2.13e-8$& $2.63e-8$ & $\mathbf{1.13e-6}$ &$\mathbf{2.12e-7}$ & $1.40e-8$ \\
%\cline{2-8}
&$43$ & $3.36e-9$ & $1.23e-8$& ${1.02e-9}^*$ & $9.80e-8$ &$6.14e-8$&$1.00e-8$ \\
%\cline{2-8}
&$44$ &$2.38e-9$ & ${1.88e-9}^*$& $7.07e-10$ & $4.43e-8$ &$1.64e-8$&${2.19e-9}^*$ \\
%\cline{2-8}
&$45$ & $2.67e-9$ & $1.78e-9$& $4.10e-10$ & ${9.39e-9}^*$ &${7.97e-9}^*$ &$2.00e-9$\\
\hline
\hline
\multirow{10}{*}{ABB}&
\backslashbox{$k$}{$h$} & \footnotesize{$2^{-5}\sqrt{2}$} &\footnotesize{$2^{-6}\sqrt{2}$}&\footnotesize{$2^{-7}\sqrt{2}$}&\footnotesize{$2^{-8}\sqrt{2}$}&\footnotesize{$2^{-9}\sqrt{2}$}&\footnotesize{$2^{-10}\sqrt{2}$}\\
\cline{2-8}
&$37$ & $5.10e-8$ & $1.57e-8$& $2.93e-8$ & $3.44e-8$ &$3.83e-8$ & $2.55e-8$ \\
%\cline{2-8}
&$38$ & $2.70e-8$ & ${7.44e-9}^*$& $1.96e-8$ & $2.49e-8$ &$2.94e-8$&$1.55e-8$ \\
%\cline{2-8}
&$39$ & $2.37e-8$ & $5.79e-9$& $\mathbf{3.70e-8}$ & $\mathbf{4.67e-8}$ &$2.20e-8$&$1.15e-8$ \\
%\cline{2-8}
&$40$ & $\mathbf{2.50e-8}$ & $4.61e-9$& $\mathbf{7.85e-8}$ & $\mathbf{1.50e-7}$ &$\mathbf{9.20e-8}$ &${2.21e-9}^*$\\
%\cline{2-8}
&$41$ & $\mathbf{1.50e-7}$ & $\mathbf{1.25e-8}$& $\mathbf{8.04e-8}$ & $9.71e-8$ &$\mathbf{1.81e-7}$ & $\mathbf{5.17e-8}$ \\
%\cline{2-8}
&$42$ & $1.31e-8$ & $\mathbf{1.39e-8}$& $1.06e-8$ & $1.08e-8$ &$1.17e-8$ & $3.26e-8$ \\
%\cline{2-8}
&$43$ & ${9.26e-9}^*$ & $8.82e-9$& ${2.66e-9}^*$ & ${6.73e-9}^*$ &${6.90e-9}^*$&$1.36e-8$ \\
%\cline{2-8}
&$44$ &$8.76e-9$ & $1.06e-9$& $1.75e-9$ & $1.90e-9$ &$6.11e-9$&$2.33e-9$ \\
%\cline{2-8}
&$45$ & $4.43e-9$ & $9.25e-10$& $7.39e-10$ & $1.29e-9$ &$5.62e-9$ &$1.90e-9$\\
\hline
\end{tabular}}
\end{center}
\caption{The values of  $\| \mathcal{G}^h_k \|_h $ related to  Example \ref{exp1}  for  $\beta = 0.01$ and  iterations $k = 37,\dots,45$}
 \label{table1b}
\end{table}
%\begin{center}
%\begin{tabular}{ | c | c | c | c |  c |  c | c |}
%\hline
%\multicolumn{7}{|c|}{$\beta = 0.1$} \\
%\hline
%\multirow{7}{*}{BB1}&
%$\epsilon$  & $h=2^{-5}\sqrt{2}$ &$h=2^{-6}\sqrt{2}$&$h=2^{-7}\sqrt{2}$&$h=2^{-8}\sqrt{2}$&$h=2^{-9}\sqrt{2}$\\
%& & $h=2^{-5}\sqrt{2}$ &$h=2^{-6}\sqrt{2}$&$h=2^{-7}\sqrt{2}$&$h=2^{-8}\sqrt{2}$&$h=2^{-9}\sqrt{2}$\\
%\cline{2-7}
%&$1e-2$ & $2$ & $2$& $2$ & $2$ &$2$ \\
%\cline{2-7}
%&$1e-4$ & $2$ & $2$& $2$ & $2$ &$2$ \\
%\cline{2-7}
%&$1e-6$ & $2$ & $2$& $2$ & $2$ &$2$ \\
%\cline{2-7}
%&$1e-8$ & $2$ & $2$& $2$ & $2$ &$2$ \\
%\hline
%\end{tabular}
%\end{center}
%
%
%\begin{center}
%\begin{tabular}{ | c | c | c | c |  c |  c | c |}
%\hline
%\multicolumn{7}{|c|}{$\beta = 0.1$} \\
%\hline
%\multirow{6}{*}{BB1}&\multirow{2}{*}{\backslashbox{$\epsilon$}{$(\delta t,h)$}}
% & $h=2^{-5}\sqrt{2}$ &$h=2^{-6}\sqrt{2}$&$h=2^{-7}\sqrt{2}$&$h=2^{-8}\sqrt{2}$&$h=2^{-9}\sqrt{2}$\\
%& & $h=2^{-5}\sqrt{2}$ &$h=2^{-6}\sqrt{2}$&$h=2^{-7}\sqrt{2}$&$h=2^{-8}\sqrt{2}$&$h=2^{-9}\sqrt{2}$\\
%\cline{2-7}
%&$1e-2$ & $2$ & $2$& $2$ & $2$ &$2$ \\
%\cline{2-7}
%&$1e-4$ & $2$ & $2$& $2$ & $2$ &$2$ \\
%\cline{2-7}
%&$1e-6$ & $2$ & $2$& $2$ & $2$ &$2$ \\
%\cline{2-7}
%&$1e-8$ & $2$ & $2$& $2$ & $2$ &$2$ \\
%\hline
%\end{tabular}
%\end{center}
\end{example}
\begin{example}[Neumann optimal control the for the  linear wave equation]
\label{exp2}
In this example, we deal with the optimal control problem \eqref{e85}-\eqref{e84}. Here, the spatial discretization has been done similarly to the previous example on the domain $\Omega: = (0,1)^2$ with the mesh-size $h$. Further,  for the temporal discretization of the state equation we used  a Petrov-Galerkin scheme based on continuous piecewise linear basis functions for the trial space and piecewise constant test functions. By doing so, the resulting discretized system is equivalent to the system first discretized in space followed by the Crank-Nicolson time stepping method with a step-size $\Delta t$.  Since the temporal test functions have been chosen to be piecewise constant, it is natural to also discretize the adjoint equation and also control by these functions.  This implies that the approximated  gradient is consistent with both continuous functional and the discrete functional.  Here we set $T =1$, $y^1_0(x) =\sin(\pi x_1)\sin(\pi x_2)$,  $y^2_0(x) =0$,  $f(t,x) = \pi^2\sin(\pi x_1t)\sin(\pi x_2t)$, $z_d(x) = 0$, and
\begin{equation*}
y_d(x,t) = \begin{cases} -x_1  & x_1<0.5 ,\\
                              x_1  & x_1\geq 0.5,
\end{cases}
\end{equation*}
where $x := (x_1,x_2) \in \Omega$. The Neumann control is applied on the subset $\Gamma_c \subset \partial \Omega$ given by $\{ (1,x_2) : x_2 \in (0,1)\} \cup \{ (x_1,1) :  x_1 \in (0,1)\}$.
% as illustrated in Figure \ref{fig}.
%\begin{figure}[ht]
%\centering
%\includegraphics[height=2cm,width=2cm]{NemannC.eps}
%\caption{Control domains}
%\label{fig}
%\end{figure}
In Figure \ref{Fig2}, we report the behaviour of the gradient norm for Example \ref{exp2} for the choice of BB2 step-sizes, $(\Delta t,h)=(0.0064,2^{-7}\sqrt{2})$, and  for different values of $\beta$. To illustrate the mesh-independence, we reported the values of $k_{\Delta t,h}^*(\epsilon)$ for different levels of temporal and spatial discretization. As it is reported in Table \ref{table2}, we decreased the mesh-size $h$ and step-size $\Delta t$ simultaneously. Clearly, similar observations as in the previous example are also valid  for this example, with the difference that here for $ \beta= 0.5$, and  $\beta = 0.05$, we have $\ell \approx 0$  and $\ell \approx 6$, respectively.

%%%%%%%%%%%%%%%%%%%%%%%%%%%%%%%%%%%%%%%%%%%%%%%%%%%%%%%%
\begin{figure}[htbp]
    \centering
        \includegraphics[height=5cm,width=7cm]{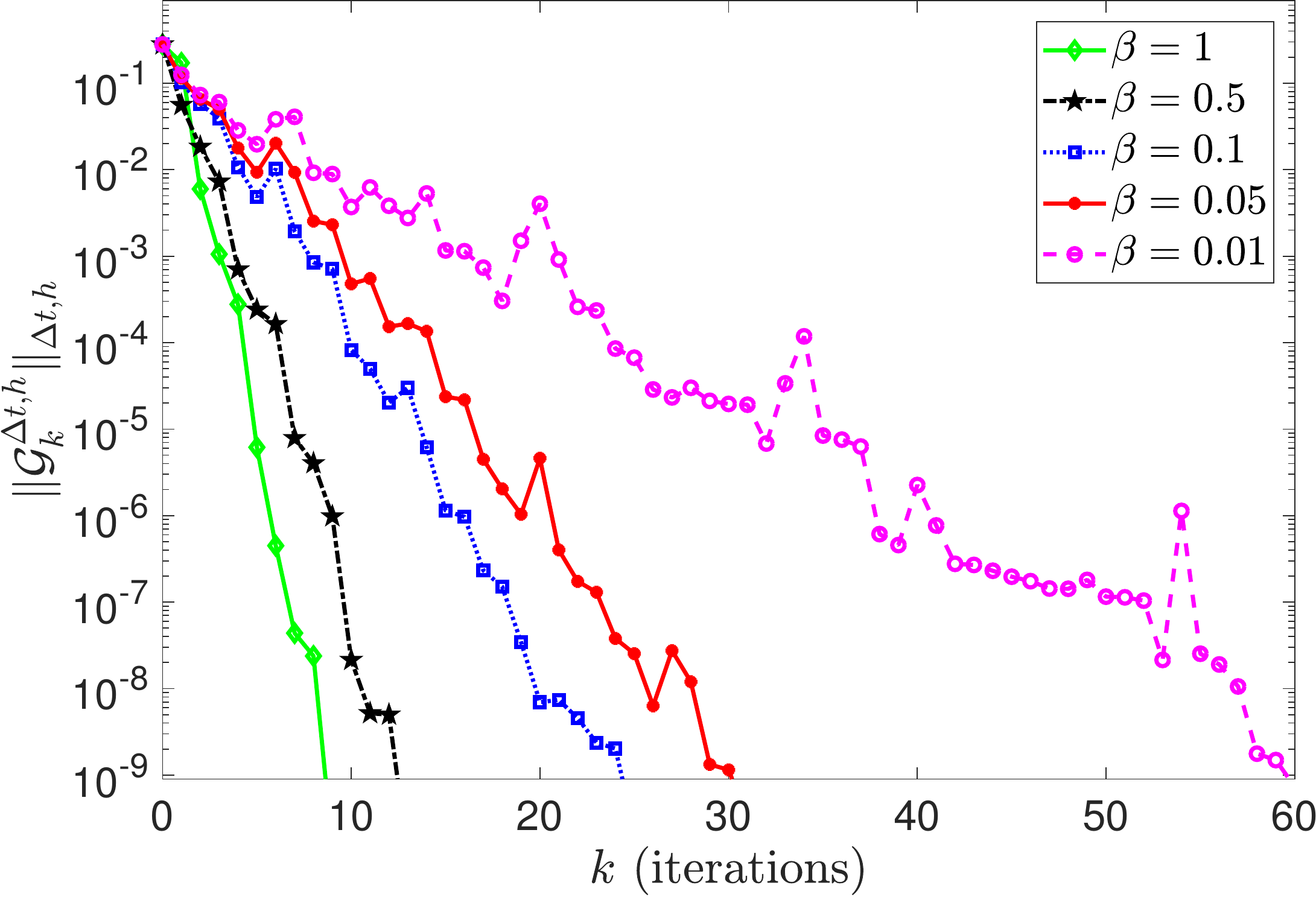}
\caption{Convergence of  $\| \mathcal{G}^{\Delta t,h}_k \|_{\Delta t,h}$  for Algorithm \ref{BBa} applied to Example \ref{exp2} with BB2  step-sizes, $(\Delta t,h)=(0.0064,2^{-7}\sqrt{2})$, and  for different choices of $\beta$}
 \label{Fig2}
\end{figure}
%%%%%%%%%%%%%%%%%%%%%%%%%%%%%%%%%%%%%%%%%%%%%%%%%%%%5
\begin{table}
\begin{center}
\scalebox{0.7}{%
\begin{tabular}{ | c | c | c | c |  c |  c | c |}
\hline
\multicolumn{7}{|c|}{The number of required iteration $k^*_{\Delta t,h}(\epsilon)$} \\
\hline
\multicolumn{7}{|c|}{$\beta = 0.5$} \\
\hline
\multirow{6}{*}{BB1}&\backslashbox{$\epsilon$}{$(\Delta t,h)$}& \footnotesize{$(0.01,2^{-4}\sqrt{2})$} &\footnotesize{$(0.04,2^{-5}\sqrt{2})$}&\footnotesize{$(0.016,2^{-6}\sqrt{2})$}&\footnotesize{$(0.0064,2^{-7}\sqrt{2})$}&\footnotesize{$(0.0026,2^{-8}\sqrt{2})$}\\
\cline{2-7}
&$1e-2$ & $3$ & $3$& $3$ & $3$ &$3$ \\
\cline{2-7}
&$1e-4$ & $7$ & $7$& $7$ & $7$ &$7$ \\
\cline{2-7}
&$1e-6$ & $9$ & $9$& $9$ & $9$ &$9$ \\
\cline{2-7}
&$1e-8$ & $11$ & $11$& $11$ & $11$ &$11$ \\
\hline
\multirow{6}{*}{BB2}&\backslashbox{$\epsilon$}{$(\Delta t,h)$}& \footnotesize{$(0.01,2^{-4}\sqrt{2})$} &\footnotesize{$(0.04,2^{-5}\sqrt{2})$}&\footnotesize{$(0.016,2^{-6}\sqrt{2})$}&\footnotesize{$(0.0064,2^{-7}\sqrt{2})$}&\footnotesize{$(0.0026,2^{-8}\sqrt{2})$}\\
\cline{2-7}
&$1e-2$ & $3$ & $3$& $3$ & $3$ &$3$ \\
\cline{2-7}
&$1e-4$ & $7$ & $7$& $7$ & $7$ &$7$ \\
\cline{2-7}
&$1e-6$ & $9$ & $9$& $9$ & $9$ &$9$ \\
\cline{2-7}
&$1e-8$ & $11$ & $11$& $11$ & $11$ &$11$ \\
\hline
\multirow{6}{*}{ABB}&\backslashbox{$\epsilon$}{$(\Delta t,h)$}& \footnotesize{$(0.01,2^{-4}\sqrt{2})$} &\footnotesize{$(0.04,2^{-5}\sqrt{2})$}&\footnotesize{$(0.016,2^{-6}\sqrt{2})$}&\footnotesize{$(0.0064,2^{-7}\sqrt{2})$}&\footnotesize{$(0.0026,2^{-8}\sqrt{2})$}\\
\cline{2-7}
&$1e-2$ & $3$ & $3$& $3$ & $3$ &$3$ \\
\cline{2-7}
&$1e-4$ & $7$ & $7$& $7$ & $7$ &$7$ \\
\cline{2-7}
&$1e-6$ & $9$ & $9$& $9$ & $9$ &$9$ \\
\cline{2-7}
&$1e-8$ & $11$ & $11$& $11$ & $11$ &$11$ \\
\hline
\hline
\multicolumn{7}{|c|}{$\beta = 0.05$} \\
\hline
\multirow{6}{*}{BB1}&\backslashbox{$\epsilon$}{$(\Delta t,h)$}& \footnotesize{$(0.01,2^{-4}\sqrt{2})$} &\footnotesize{$(0.04,2^{-5}\sqrt{2})$}&\footnotesize{$(0.016,2^{-6}\sqrt{2})$}&\footnotesize{$(0.0064,2^{-7}\sqrt{2})$}&\footnotesize{$(0.0026,2^{-8}\sqrt{2})$}\\
\cline{2-7}
&$1e-2$ & $7$ & $7$& $7$ & $7$ &$7$ \\
\cline{2-7}
&$1e-4$ & $14$ & $14$& $14$ & $14$ &$14$ \\
\cline{2-7}
&$1e-6$ & $21$ & $21$& $21$ & $24$ &$24$ \\
\cline{2-7}
&$1e-8$ & $28$ & $29$& $28$ & $29$ &$29$ \\
\hline
\multirow{6}{*}{BB2}&\backslashbox{$\epsilon$}{$(\Delta t,h)$}& \footnotesize{$(0.01,2^{-4}\sqrt{2})$} &\footnotesize{$(0.04,2^{-5}\sqrt{2})$}&\footnotesize{$(0.016,2^{-6}\sqrt{2})$}&\footnotesize{$(0.0064,2^{-7}\sqrt{2})$}&\footnotesize{$(0.0026,2^{-8}\sqrt{2})$}\\
\cline{2-7}
&$1e-2$ & $5$ & $5$& $5$ & $5$ &$5$ \\
\cline{2-7}
&$1e-4$ & $12$ & $15$& $15$ & $15$ &$15$ \\
\cline{2-7}
&$1e-6$ & $19$ & $21$& $21$ & $21$ &$21$ \\
\cline{2-7}
&$1e-8$ & $24$ & $25$& $25$ & $26$ &$26$ \\
\hline
\multirow{6}{*}{ABB}&\backslashbox{$\epsilon$}{$(\Delta t,h)$}& \footnotesize{$(0.01,2^{-4}\sqrt{2})$} &\footnotesize{$(0.04,2^{-5}\sqrt{2})$}&\footnotesize{$(0.016,2^{-6}\sqrt{2})$}&\footnotesize{$(0.0064,2^{-7}\sqrt{2})$}&\footnotesize{$(0.0026,2^{-8}\sqrt{2})$}\\
\cline{2-7}
&$1e-2$ & $5$ & $7$& $7$ & $7$ &$7$ \\
\cline{2-7}
&$1e-4$ & $14$ & $14$& $14$ & $14$ &$13$ \\
\cline{2-7}
&$1e-6$ & $21$ & $22$& $24$ & $23$ &$23$ \\
\cline{2-7}
&$1e-8$ & $28$ & $31$& $34$ & $29$ &$28$ \\
\hline
\end{tabular}}
\end{center}
\caption{Numerical results for Example \ref{exp2}}
 \label{table2}
\end{table}
\end{example}
\begin{example}[Distributed optimal control for the Burgers equation] \label{exp3}
 We consider the optimal control problem \eqref{e98}-\eqref{e99} posed on the interval $(0,1)$. The spatial discretization was done by the standard Galerkin method based on piecewise linear basis functions with  mesh-size $h$. For temporal discretization, we used the implicit Euler method with a step-size denoted by $\Delta t$. Moreover, the resulting nonlinear systems after the temporal discretization were solved by  Newton's method with the tolerance $\epsilon_n =10^{-13}$. Here the control acts on the  open interval $\hat{\Omega} = (0.1,0.4)$. Moreover we set $\vartheta = 0.01$, $y_0(x) = 5\exp(-20(x-0.5)^2)$,  and  $ y_d(t,x)=z_d(x)= f(t,x) = 0$. Similarly to the previous example, we compute the values of $k_{\Delta t,h}^*(\epsilon)$ for different levels of temporal and spatial discretization. These results are gathered in Table \ref{table3}.  Further, Figure \ref{Fig3} shows the convergence of Algorithm \ref{BBa} applied to Example \ref{exp3}, for the choice of ABB  step-sizes, $(\Delta t,h)=(2^{-7},2^{-8})$, and  different values  of $\beta$. As can be seen from Table \ref{table3}  and Figures \ref{Fig3}, despite the nonlinearity the observations 1 and 2 from Example \ref{exp1} hold also true for this example.

%%%%%%%%%%%%%%%%%%%%%%%%%%%%%%%%%%%%%%%%%%%%%%%%%%%%%%%%
\begin{figure}[htbp]
    \centering
        \includegraphics[height=5cm,width=7cm]{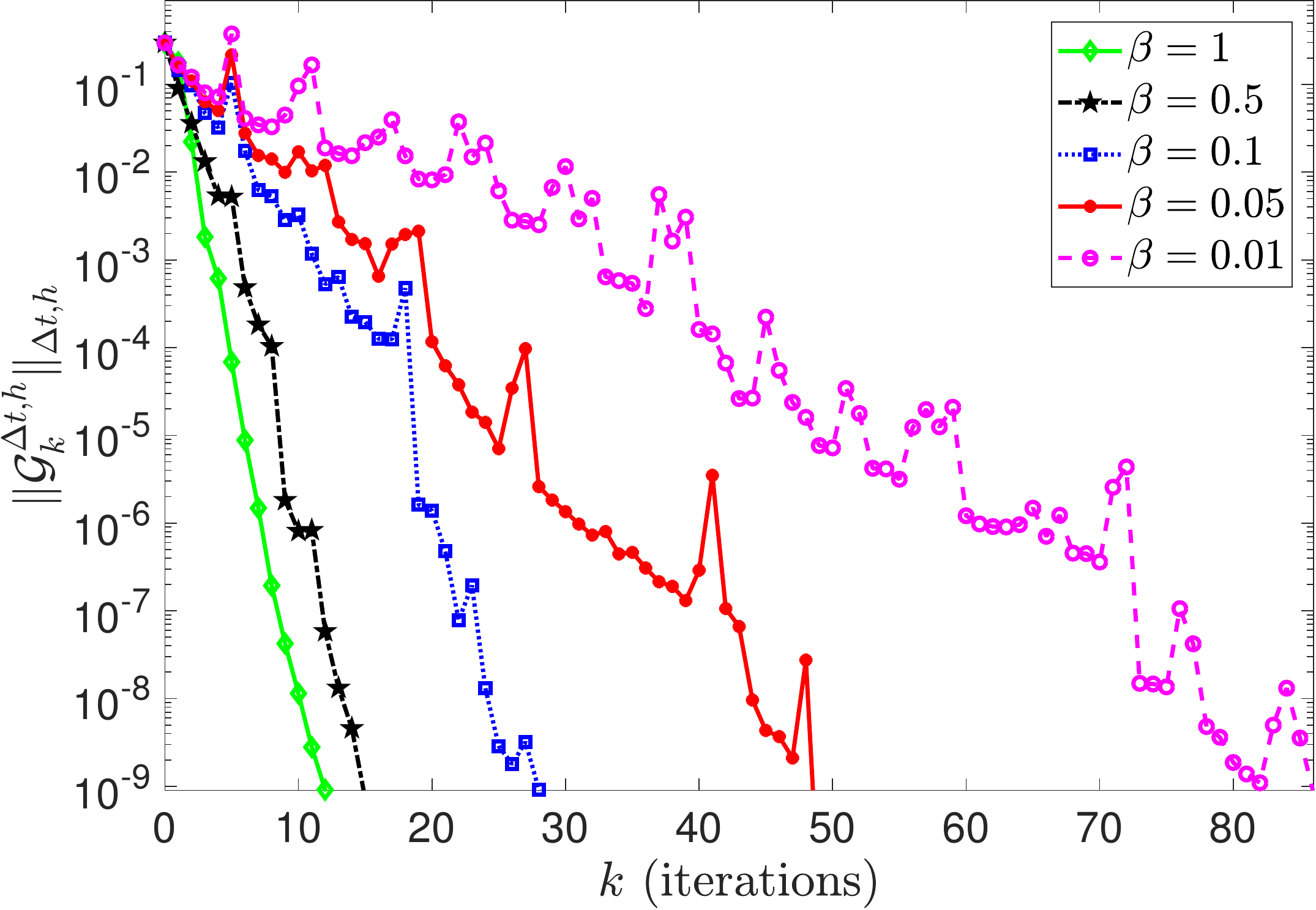}
\caption{Convergence of  $\| \mathcal{G}^{\Delta t,h}_k \|_{\Delta t,h}$  for Algorithm \ref{BBa} applied to Example \ref{exp3} with ABB  step-sizes, $(\Delta t,h)=(2^{-7},2^{-8})$, and  for different choices of $\beta$}
 \label{Fig3}
\end{figure}
%%%%%%%%%%%%%%%%%%%%%%%%%%%%%%%%%%%%%%%%%%%%%%%%%%%%5

\begin{table}[htpb]
\begin{center}
\scalebox{0.7}{%
\begin{tabular}{ | c | c | c | c |  c |  c | c |}
\hline
\multicolumn{7}{|c|}{The number of required iteration $k^*_{\Delta t,h}(\epsilon)$} \\
\hline
\multicolumn{7}{|c|}{$\beta = 0.5$} \\
\hline
\multirow{6}{*}{BB1}&\backslashbox{$\epsilon$}{$(\Delta t,h)$}& \footnotesize{$(2^{-4},2^{-5})$} &\footnotesize{$(2^{-5},2^{-6})$}&\footnotesize{$(2^{-6},2^{-7})$}&\footnotesize{$(2^{-7},2^{-8})$}&\footnotesize{$(2^{-8},2^{-9})$}\\
\cline{2-7}
&$1e-2$ & $4$ & $4$& $4$ & $4$ &$4$ \\
\cline{2-7}
&$1e-4$ & $8$ & $8$& $9$ & $9$ &$9$ \\
\cline{2-7}
&$1e-6$ & $11$ & $11$& $11$ & $11$ &$11$ \\
\cline{2-7}
&$1e-8$ & $13$ & $14$& $14$ & $14$ &$14$ \\
\hline
\multirow{6}{*}{BB2}&\backslashbox{$\epsilon$}{$(\Delta t,h)$}& \footnotesize{$(2^{-4},2^{-5})$} &\footnotesize{$(2^{-5},2^{-6})$}&\footnotesize{$(2^{-6},2^{-7})$}&\footnotesize{$(2^{-7},2^{-8})$}&\footnotesize{$(2^{-8},2^{-9})$}\\
\cline{2-7}
&$1e-2$ & $4$ & $4$& $4$ & $4$ &$4$ \\
\cline{2-7}
&$1e-4$ & $8$ & $8$& $8$ & $8$ &$8$ \\
\cline{2-7}
&$1e-6$ & $10$ & $11$& $11$ & $11$ &$11$ \\
\cline{2-7}
&$1e-8$ & $14$ & $14$& $14$ & $14$ &$14$ \\
\hline
\multirow{6}{*}{ABB}&\backslashbox{$\epsilon$}{$(\Delta t,h)$}& \footnotesize{$(2^{-4},2^{-5})$} &\footnotesize{$(2^{-5},2^{-6})$}&\footnotesize{$(2^{-6},2^{-7})$}&\footnotesize{$(2^{-7},2^{-8})$}&\footnotesize{$(2^{-8},2^{-9})$}\\
\cline{2-7}
&$1e-2$ & $4$ & $4$& $4$ & $4$ &$4$ \\
\cline{2-7}
&$1e-4$ & $8$ & $8$& $9$ & $9$ &$9$ \\
\cline{2-7}
&$1e-6$ & $11$ & $12$& $10$ & $10$ &$10$ \\
\cline{2-7}
&$1e-8$ & $13$ & $13$& $14$ & $14$ &$14$ \\
\hline
\hline
\multicolumn{7}{|c|}{$\beta = 0.05$} \\
\hline
\multirow{6}{*}{BB1}&\backslashbox{$\epsilon$}{$(\Delta t,h)$}& \footnotesize{$(2^{-4},2^{-5})$} &\footnotesize{$(2^{-5},2^{-6})$}&\footnotesize{$(2^{-6},2^{-7})$}&\footnotesize{$(2^{-7},2^{-8})$}&\footnotesize{$(2^{-8},2^{-9})$}\\
\cline{2-7}
&$1e-2$ & $12$ & $12$& $12$ & $12$ &$12$ \\
\cline{2-7}
&$1e-4$ & $25$ & $24$& $23$ & $25$ &$24$ \\
\cline{2-7}
&$1e-6$ & $36$ & $32$& $32$ & $33$ &$32$ \\
\cline{2-7}
&$1e-8$ & $43$ & $40$& $44$ & $42$ &$39$ \\
\hline
\hline
\multirow{6}{*}{BB2}&\backslashbox{$\epsilon$}{$(\Delta t,h)$}& \footnotesize{$(2^{-4},2^{-5})$} &\footnotesize{$(2^{-5},2^{-6})$}&\footnotesize{$(2^{-6},2^{-7})$}&\footnotesize{$(2^{-7},2^{-8})$}&\footnotesize{$(2^{-8},2^{-9})$}\\
\cline{2-7}
&$1e-2$ & $9$ & $9$& $9$ & $9$ &$9$ \\
\cline{2-7}
&$1e-4$ & $23$ & $21$& $24$ & $23$ &$23$ \\
\cline{2-7}
&$1e-6$ & $31$ & $29$& $33$ & $34$ &$35$ \\
\cline{2-7}
&$1e-8$ & $35$ & $41$& $41$ & $38$ &$40$ \\
\hline
\multirow{6}{*}{ABB}&\backslashbox{$\epsilon$}{$(\Delta t,h)$}& \footnotesize{$(2^{-4},2^{-5})$} &\footnotesize{$(2^{-5},2^{-6})$}&\footnotesize{$(2^{-6},2^{-7})$}&\footnotesize{$(2^{-7},2^{-8})$}&\footnotesize{$(2^{-8},2^{-9})$}\\
\cline{2-7}
&$1e-2$ & $13$ & $13$& $13$ & $9$ &$9$ \\
\cline{2-7}
&$1e-4$ & $24$ & $26$& $21$ & $21$ &$21$ \\
\cline{2-7}
&$1e-6$ & $30$ & $32$& $29$ & $31$ &$35$ \\
\cline{2-7}
&$1e-8$ & $36$ & $38$& $42$ & $44$ &$40$ \\
\hline
\end{tabular}}
\end{center}
\caption{Numerical results for Example \ref{exp3}}
\label{table3}
\end{table}
\end{example}
%\begin{acknowledgements}
%If you'd like to thank anyone, place your comments here
%and remove the percent signs.
%\end{acknowledgements}

\appendix
\section{Appendix}
\subsection{Proof of Proposition \ref{lem5}}
\label{Ap1}
For every  $k\geq 1$, we consider the sequence  $\{ \hat{\alpha}^k_j \}_j$ associated to $\{ \hat{u}^k_j\}_j$, which is defined by
\begin{equation}
\label{e124}
\hat{\alpha}^k_j:=
\begin{cases}
 \hat{\alpha}^{BB1k}_j &  \text{ if } \alpha_{k+j}=\alpha^{BB1}_{k+j}, \\
 \hat{\alpha}^{BB2k}_j &  \text{ if } \alpha_{k+j}=\alpha^{BB2}_{k+j}. \\
\end{cases}
\end{equation}
for all $j\geq 1$. We will show by induction  that for every $q \in \{ 0,\dots , m \}$,  there exist positive constants $\lambda_{q}$ and $\eta_{q}$ such that
\begin{equation}
\tag{$P_q$}
\label{P}
\begin{cases}
&\text{If $u_k \in \mathcal{B}_{\eta_{q}}(u^*)$, $\alpha_k \in [\alpha_{\inf},\alpha_{\sup}]$, and if for some $\ell \in \{0, \dots, q\}$, the property \eqref{e67} holds,}\\
&\text{then  we have }  u_{k+j} \in \mathcal{B}_{\tau}(u^*), \text{ and } \quad \|u_{k+j}-\hat{u}^k_j\| \leq \lambda_q \|u_k-u^*\|^2 \text{ for all } j \in \{ 0,\dots,\ell\}. \\
\end{cases}
\end{equation}

For the case that $q =\ell =0$ and the choice of  $\eta_0 = \tau$ and arbitrary $\lambda_0>0$, property \eqref{P}  holds clearly since $\hat{u}^k_0 = u_k$.

 For the case that $q= 1$, by using \eqref{e80} and  \eqref{e60}, we have
\begin{equation*}
\|u_{k+1}-u^*\| \leq \|u_k-u^*\| + \| \mathcal{S}_{k} \| \leq  \|u_k-u^*\| + \frac{1}{|\alpha_k|} \| \mathcal{G}_k\| \leq (1+\frac{\alpha_{\sup}}{\alpha_{\inf}}) \| u_k-u^*\|,
\end{equation*}
where $\mathcal{S}_{k}:=u_{k+1}-u_{k}$. Hence, for  $\eta_1 := \frac{\tau}{1+\frac{\alpha_{\sup}}{\alpha_{\inf}}} $, we obtain $u_{k+1} \in \mathcal{B}_{\tau}(u^*)$. In the case $q=1$  we have either $\ell = 0$ or $\ell=1$.  For $\ell=0$, \eqref{e62} holds trivially. Therefore, we need to investigate $\eqref{e62}$ for $\ell= 1$.  By \ref{e45} and using the facts that $u_k = \hat{u}^k_{0} \in \mathcal{B}_{\tau}(u^*)$ and  $\alpha_k = \hat{\alpha}^k_{0}$, we can infer that
\begin{equation*}
 \begin{split}
 \|u_{k+1}-\hat{u}^k_{1}\|  &\leq \| u_{k}-\frac{1}{\alpha_{k} } \mathcal{G}(u_{k})-(\hat{u}^k_{0}-\frac{1}{\hat{\alpha}^k_{0}}\hat{\mathcal{G}}(\hat{u}^k_{0}))\| \\
 & \leq \frac{1}{|\alpha_k|}  \| \mathcal{G}(u_k)-\hat{\mathcal{G}}(\hat{u}^k_{0}) \|  \\
 &\leq  \frac{1}{\alpha_{\inf}} \| \mathcal{G}(u_{k})-  \mathcal{A}^{\mathcal{F}}_{u^*}(u_{k}-u^*) \| \leq \frac{L}{\alpha_{\inf}}\| u_{k}-u^*\|^2,
 \end{split}
\end{equation*}
where  $\hat{\mathcal{G}}(u)=\mathcal{A}^{\mathcal{F}}_{u^*}(u-u^*)$. This ends the justification of the induction basis by  choosing $\lambda_1 := \frac{L}{\alpha_{\inf}}$ and  $\eta_1 := \frac{\tau}{1+\frac{\alpha_{\sup}}{\alpha_{\inf}}}$.

Now, let $p$ be an integer with $ 2\leq p < m $ such that Property \eqref{P} holds for $q=p$ and, constants  $\lambda_{p}$ and $\eta_{p}$. We will show that this property holds for $q = p+1$, a positive constant $\lambda_{p+1} \geq \lambda_{p}$, and for the choice of
\begin{equation}
\label{e69}
\eta_{p+1}: = \min \left\{ \frac{1}{4\lambda_{p}}  , \tau \left(1+ \frac{\alpha_{\sup}}{\alpha_{\inf}} \right)^{-(p+1)}   \right\},
\end{equation}
where due to \eqref{e69}, we obtain $\eta_{p+1}\leq \eta_{p}$.

Now assume that $u_k \in  \mathcal{B}_{\eta_{p+1}}(u^*)$ and $\alpha_k \in [\alpha_{\inf},\alpha_{\sup}]$.  First  we investigate Property \eqref{P}  for $q=p+1$ and  $\ell \leq p$. That is, we assume that \eqref{e67} holds for any given $\ell \leq p$ and we show that \eqref{e62} holds. In this case, since $\eta_{p+1} \leq \eta_{p}$,  we can use the induction hypothesis (Property \eqref{P} for  $q=p$ ) and  conclude, for every  $j \in \{ 0,\dots,\ell\}$ and  $\lambda_{p+1}\geq \lambda_{p}$,  that
\begin{equation}
\label{e141aaa}
u_{k+j} \in \mathcal{B}_{\tau}(u^*)  \quad \text{ and } \quad \|u_{k+j}-\hat{u}^k_j\| \leq \lambda_p \|u_k-u^*\|^2                                                                                                                                                                                   \leq \lambda_{p+1} \|u_k-u^*\|^2,
\end{equation}
and, thus, \eqref{e62} holds. In the remainder of the proof, we consider the case $\ell = p+1$.  In this case  $u_k \in \mathcal{B}_{\eta_{p+1}}(u^*)$, $\alpha_k \in [\alpha_{\inf},\alpha_{\sup}] $, and
 \begin{equation}
 \label{e141}
 \|\hat{u}^k_j-u^*\| \geq \frac{1}{2} \| u_k-u^*\|  \quad \text{ for all  }  j \in \{0, \dots, p\},
 \end{equation}
 and we need to verify that $u_{k+j} \in \mathcal{B}_{\tau}(u^*)$ for $j =\{1,\dots, p+1\}$ and
\begin{equation}
\label{e142}
\|u_{k+j+1}-\hat{u}^k_{j+1}\| \leq \lambda_{p+1} \|u_k-u^*\|^2  \text{ for all } j \in \{0,\dots,p+1\}.
\end{equation}

 First, suppose that  $u_{k+j} \in \mathcal{B}_{\tau}(u^*)$ for $j=1,2,\dots,p$. By  \eqref{e80} and  \eqref{e60}, we have
\begin{equation*}
\begin{split}
\|u_{k+p+1}-u^*\| &\leq \|u_{k+p}-u^*\| + \| \mathcal{S}_{k+p} \| \\
&\leq  \|u_{k+p}-u^*\| + \frac{1}{|\alpha_{k+p}|} \| \mathcal{G}_{k+p}\| \leq (1+\frac{\alpha_{\sup}}{\alpha_{\inf}}) \| u_{k+p}-u^*\|,
\end{split}
\end{equation*}
and, in a similar manner, it can be shown by induction that
\begin{equation*}
\|u_{k+p+1}-u^*\| \leq (1+\frac{\alpha_{\sup}}{\alpha_{\inf}})^{p+1} \| u_{k}-u^*\|.
\end{equation*}
Therefore, due to the definition of $\eta_{p+1}$, it follows that $u_{\ell} \in \mathcal{B}_{\tau}(u^*)$ for every $\ell \in \{1,2,\dots,p+1\}$ and any $u_k \in \mathcal{B}_{\eta_{p+1}}(u^*)$ and $\alpha_k \in [\alpha_{\inf},\alpha_{\sup}] $. It remains to verify \eqref{e142}. In fact, due to  \eqref{e141},  the induction hypothesis,  and the fact that $\eta_{q+1} \leq \eta_{q}$,  \eqref{e142} holds  for any arbitrary $\lambda_{p+1} \geq \lambda_{p}$ and $j \leq p$ .  Hence, it suffices to show that
\begin{equation}
\label{e143}
\|u_{k+p+1}-\hat{u}^k_{p+1}\| \leq \lambda_{p+1} \|u_k-u^*\|^2
 \end{equation}
for some  $\lambda_{p+1}\geq \lambda_{p}$.

  By using \eqref{e1}  and the triangle inequality,  we obtain
 \begin{equation}
 \label{e83}
 \begin{split}
 \|u_{k+p+1}-\hat{u}^k_{p+1}\|  &\leq \| u_{k+p}-\frac{1}{\alpha_{k+p} } \mathcal{G}(u_{k+p})-(\hat{u}^k_{p}-\frac{1}{\hat{\alpha}^k_{p}}\hat{\mathcal{G}}(\hat{u}^k_{p}))\| \\
 & \leq \| u_{k+p}-\hat{u}^k_{p}\|+  \frac{1}{|\hat{\alpha}^k_{p}|}\| \mathcal{G}(u_{k+p})-\hat{\mathcal{G}}(\hat{u}^k_{p}) \|  \\
 &+ \abs{ \frac{1}{\alpha_{k+p}}-\frac{1}{\hat{\alpha}^k_{p}} } \|\mathcal{G}(u_{k+p})\|.
 \end{split}
\end{equation}
From now on, we define $c$ as  a positive generic constant which depends only on $\tau$, $\alpha_{\inf}$, $\alpha_{\sup}$ and $m$ , but not on $k$, and the choice of $\alpha_k$ and $u_k \in \mathcal{B}_{\tau}(u^*)$. We shall show that the following inequalities hold
\begin{align}
 \frac{1}{|\hat{\alpha}^k_{p}|} \| \mathcal{G}(u_{k+p})-\hat{\mathcal{G}}(\hat{u}^k_{p}) \| & \leq c\|u_k-u^*\|^2, \label{e63} \\
\abs{ \frac{1}{\alpha_{k+p}}-\frac{1}{\hat{\alpha}^k_{p}} } \|\mathcal{G}(u_{k+p})\|& \leq c\|u_k-u^*\|^2. \label{e64}
\end{align}
\begin{description}
\item[Verification of inequality \eqref{e63}:] First, by adding and subtracting  $\hat{\mathcal{G}}(u_{k+p})$,  using the triangle inequality, \ref{e45} and \ref{e46},  we obtain
\begin{equation}
\label{e68}
\begin{split}
\| \mathcal{G}(u_{k+p})-\hat{\mathcal{G}}(\hat{u}^k_{p}) \| &\leq \| \mathcal{G}(u_{k+p})-  \hat{\mathcal{G}}(u_{k+p}) \| +  \|\hat{\mathcal{G}}(u_{k+p}) - \hat{\mathcal{G}}(\hat{u}^k_{p}) \|  \\
&\leq \| \mathcal{G}(u_{k+p})-  \mathcal{A}^{\mathcal{F}}_{u^*}(u_{k+p}-u^*) \| +  \| \mathcal{A}^{\mathcal{F}}_{u^*}(u_{k+p} - \hat{u}^k_{p}) \| \\
&\leq L\| u_{k+p}-u^*\|^2+\alpha_{\sup}\| u_{k+p}-\hat{u}^k_{p} \| \leq c\|u_k- u^*\|^2,
\end{split}
\end{equation}
where  $\hat{\mathcal{G}}(u)=\mathcal{A}^{\mathcal{F}}_{u^*}(u-u^*)$. In the last estimate, we have used the induction hypothesis  and  the fact that
\begin{equation}
\label{e92}
\begin{split}
 \|u_{k+p}-u^*\| &\leq \|u_k-u^*\| + \sum^{p-1}_{i=0} \| \mathcal{S}_{k+i} \| \leq  \left(1+ \sum^{p}_{i = 1} \left(\frac{\alpha_{\sup}}{\alpha_{\inf}} \right)^{i} \right) \| u_k-u^*\| \\
 &\leq \left(1+ \sum^{m}_{i = 1} \left(\frac{\alpha_{\sup}}{\alpha_{\inf}} \right)^{i} \right) \| u_k-u^*\|.
\end{split}
 \end{equation}
 Now by using \eqref{e68} and \eqref{e81}, we can infer that
\begin{equation*}
\frac{1}{|\hat{\alpha}^k_{p}|} \| \mathcal{G}(u_{k+p})-\hat{\mathcal{G}}(\hat{u}^k_{p}) \| \leq \frac{1}{\alpha_{\inf}}  \| \mathcal{G}(u_{k+p})-\hat{\mathcal{G}}(\hat{u}^k_{p}) \| \leq c \| u_{k}-u^*\|^2.
\end{equation*}
\item[Verification of inequality \eqref{e64}:] Here we need only to show that
\begin{equation}
\label{e79}
\abs{\frac{1}{\alpha_{k+p}}-\frac{1}{\hat{\alpha}^k_p}}  \leq c\|u_k-u^*\|.
\end{equation}
Then, thanks to \eqref{e60}, \eqref{e92}, and \eqref{e79},  we obtain
 \begin{equation*}
 \abs{ \frac{1}{\alpha_{k+p}}-\frac{1}{\hat{\alpha}^k_{p}} } \|\mathcal{G}(u_{k+p})\| \leq c \alpha_{\sup}\| u_k-u^*\|\|u_{k+p}-u^*\| \leq c \|u_k-u^*\|^2.
\end{equation*}
which implies \eqref{e64}. 
%Next we show that estimate \eqref{e79}  holds. Due to Algorithm $\ref{BBa}$, we have
%\begin{equation}
%\label{e125}
% \abs{\frac{1}{\alpha_{k+p}}-\frac{1}{\hat{\alpha}^k_p}} = \abs{P_{[0,\frac{1}{\mu_{\min}}]}(\frac{1}{\kappa_{k+p}})- P_{[0,\frac{1}{\mu_{\min}}]}(\frac{1}{\hat{\kappa}^{k}_{p}})} \leq \abs{\frac{1}{\kappa_{k+p}}- \frac{1}{\hat{\kappa}^{k}_{p}} },
%\end{equation}
%where $P_{[0,\frac{1}{\mu_{\min}}]}$ stands for the projection on the interval $[0,\frac{1}{\mu_{\min}}]$ which is Lipschitz continuous with constant 1. Therefore it is sufficient to show that
%\begin{equation}
%\label{e79a}
%\abs{\frac{1}{\kappa_{k+p}}-\frac{1}{\hat{\kappa}^k_p}}  \leq c\|u_k-u^*\|.
%\end{equation}

Due to  \eqref{e124}  we have only these two cases :
\begin{enumerate}
\item $\hat{\alpha}_{p}^k = \hat{\alpha}^{BB1, k}_p$ and $\alpha_{k+p}=\alpha^{BB1}_{k+p}$.
\item $\hat{\alpha}_{p}^k = \hat{\alpha}^{BB2, k}_p$ and $\alpha_{k+p}=\alpha^{BB2}_{k+p}$.
\end{enumerate}
We investigate the first case. Due to \eqref{e1a}, we have
\begin{equation}
\label{e144}
\begin{split}
\frac{1}{\alpha_{k+p}} = \frac{(\mathcal{S}_{k+p-1},\mathcal{S}_{k+p-1})}{(\mathcal{S}_{k+p-1},\mathcal{Y}_{k+p-1} )},   \text{  and   } \frac{1}{\hat{\alpha}^k_{p}} = \frac{(\hat{\mathcal{S}}^k_{p-1},\hat{\mathcal{S}}^k_{p-1})}{(\hat{\mathcal{S}}^k_{p-1},\hat{\mathcal{Y}}^k_{p-1} )}.
\end{split}
\end{equation}
Due to \eqref{e141} and the induction hypothesis i.e.,   property \eqref{P} for $q= p$, we have
\begin{equation}
\label{e82}
\|\mathcal{S}_{k+p-1}-\hat{\mathcal{S}}^k_{p -1}\| \leq \|u_{k+p}-\hat{u}^k_{p}\| + \| u_{k+p-1}-\hat{u}^k_{p-1}\|  \leq 2\lambda_{p} \|u_k-u^*\|^2 ,
\end{equation}
and, as a consequence, we obtain
\begin{equation}
\begin{split}
\label{e70}
\abs{\| \mathcal{S}_{k+p -1} \|^2  - \|\hat{\mathcal{S}}^{k}_{p -1} \|^2} & \leq \abs{2(  \mathcal{S}_{k+p -1}, \mathcal{S}_{k+p -1}- \hat{\mathcal{S}}^{k}_{p -1})- \|\hat{\mathcal{S}}^{k}_{p -1}- \mathcal{S}_{k+p -1}\|^2 }\\
&  \leq  c\|u_k-u^*\|^3.
\end{split}
\end{equation}
Further, by \eqref{e81}, \eqref{e141}, we have
\begin{equation}
\label{e71}
 \begin{split}
 \|\hat{\mathcal{S}}^k_{p -1}\| &= \frac{1}{|\hat{\alpha}^k_{p-1}|}\| \hat{\mathcal{G}}^k_{p-1}\| \geq \frac{1}{\alpha_{\sup}} \| \mathcal{A}^{\mathcal{F}}_{u^*} (\hat{u}^k_{p-1}-u^*)\| \geq \frac{\alpha_{\inf}}{\alpha_{\sup}} \| (\hat{u}^k_{p-1}-u^*) \|\\
 & \geq \frac{\alpha_{\inf}}{2\alpha_{\sup}} \| (\hat{u}^k_{0}-u^*) \| = \frac{\alpha_{\inf}}{2\alpha_{\sup}} \| (u_k-u^*) \|.
 \end{split}
 \end{equation}
From \eqref{e70} and \eqref{e71}, it follows that
 \begin{equation}
 \label{e76}
 \abs{1-\frac{\|  \mathcal{S}_{k+p -1} \|^2}{\| \hat{\mathcal{S}}^{k}_{p -1}  \|^2}} \leq c\| u_k-u^*\|.
 \end{equation}
Now observe that
\begin{equation}
\label{e134}
 \begin{split}
 (\mathcal{S}_{k+p-1},\mathcal{Y}_{k+p-1} )-(\hat{\mathcal{S}}^k_{p-1},\hat{\mathcal{Y}}^k_{p-1} )& = (\mathcal{S}_{k+p-1}, \mathcal{Y}_{k+p-1} -\hat{\mathcal{Y}}^k_{p-1})+(\mathcal{S}_{k+p-1}-\hat{\mathcal{S}}^k_{p-1}, \hat{\mathcal{Y}}^k_{p-1}) \\
 & = (\mathcal{S}_{k+p-1}, \mathcal{Y}_{k+p-1} -\hat{\mathcal{Y}}^k_{p-1})+ ( \mathcal{S}_{k+p-1}-\hat{\mathcal{S}}^k_{p-1}, \mathcal{A}^{\mathcal{F}}_{u^*}\hat{\mathcal{S}}^k_{p-1}).
 \end{split}
 \end{equation}
Using  \eqref{e82} and \ref{e46}, we obtain
 \begin{equation}
 \label{e135}
 \begin{split}
 &\abs{(\mathcal{S}_{k+p-1}-\hat{\mathcal{S}}^k_{p-1}, \mathcal{A}^{\mathcal{F}}_{u^*}\hat{\mathcal{S}}^k_{p-1})} \\
 & =\abs{(\mathcal{S}_{k+p-1}-\hat{\mathcal{S}}^k_{p-1}, \mathcal{A}^{\mathcal{F}}_{u^*}\mathcal{S}_{k+p-1})- (\mathcal{S}_{k+p-1}-\hat{\mathcal{S}}^k_{p-1}, \mathcal{A}^{\mathcal{F}}_{u^*}( \mathcal{S}_{k+p-1} -\hat{\mathcal{S}}^k_{p-1}))}\\
&\leq c\|u_k-u^*\|^3,
 \end{split}
 \end{equation}
 and, by \eqref{e68} and the induction hypothesis, we have
 \begin{equation}
 \label{e74}
 \abs{(\mathcal{S}_{k+p -1}, \mathcal{Y}_{k+p -1}- \hat{\mathcal{Y}}^k_{p-1})} \leq \| \mathcal{S}_{k+p -1} \|(\| \mathcal{G}_{k+p }- \hat{\mathcal{G}}^k_{p}\| +\| \mathcal{G}_{k+p-1 }- \hat{\mathcal{G}}^k_{p-1}\|)\leq c \|u_k -u^*\|^3.
 \end{equation}
Hence, using \eqref{e134}, \eqref{e135}, and \eqref{e74}, we have
\begin{equation}
\label{e97}
 \abs{(\mathcal{S}_{k+p-1},\mathcal{Y}_{k+p-1} )-(\hat{\mathcal{S}}^k_{p-1},\hat{\mathcal{Y}}^k_{p-1} )} \leq  c\| u_k-u^* \|^3.
 \end{equation}
 Moreover, by using  \ref{e46}, \eqref{e80},  \eqref{e60},  and the facts that $u_{k+p}, u_{k+p-1} \in \mathcal{B}_{\tau}(u^*)$ and $\alpha_{k} \leq \alpha_{\sup}$ for all $k\geq 1$,  we can write that
 \begin{equation}
 \label{e72}
\begin{split}
(\mathcal{S}_{k+p-1},\mathcal{Y}_{k+p-1} )& = ( \mathcal{S}_{k+p-1}, \mathcal{G}_{k+p}-\mathcal{G}_{k+p -1}) \\
&\geq  \alpha_{\inf} \|\mathcal{S}_{k+p-1}\|^2 = \alpha_{\inf} \abs{\frac{1}{\alpha_{k+p-1}}}^2\| \mathcal{G}_{k+p -1}\|^2 \\
&\geq   \frac{\alpha_{\inf}}{\alpha^2_{\sup}}\| \mathcal{G}_{k+p -1}\|^2 =  \frac{\alpha_{\inf}}{\alpha^2_{\sup}}\| \mathcal{G}(u_{k+p -1})-\mathcal{G}(u^*)\|^2\\
&\geq \frac{\alpha^3_{\inf}}{\alpha^2_{\sup}}\|u_{k+p-1}-u^* \|^2.
\end{split}
\end{equation}
Further, by \ref{e46}, the definition of $\eta_{p+1}$ in \eqref{e69},  \eqref{e141}  and  \eqref{P} with $q = p$, we have
\begin{equation}
 \label{e73}
\begin{split}
\| u_{k+p-1}-u^* \|^2 &\geq \frac{1}{2}\| \hat{u}^k_{p-1}-u^* \|^2-\| u_{k+p-1} -\hat{u}^k_{p -1}\|^2 \\
&\geq \frac{1}{8} \| \hat{u}^k_0-u^* \|^2 -\lambda^2_p \| u_k - u^*\|^4 \\
&\geq (\frac{1}{8}-\lambda^2_p\eta_{p+1}^2) \| u_k - u^* \|^2 = \frac{1}{16} \| u_k - u^* \|^2,
\end{split}
\end{equation}
Combining \eqref{e72} and \eqref{e73} we have
 \begin{equation}
 \label{e75}
(\mathcal{S}_{k+p-1},\mathcal{Y}_{k+p-1} )\geq \frac{\alpha^3_{\inf}}{\alpha^2_{\sup}}\|u_{k+p-1}-u^* \|^2 \geq  \frac{\alpha^3_{\inf}}{16\alpha^2_{\sup}}  \| u_k - u^* \|^2.
\end{equation}
From \eqref{e97} and \eqref{e75} we can write
\begin{equation}
\label{e77}
\abs{1-\frac{(\hat{\mathcal{S}}^k_{p-1},\hat{\mathcal{Y}}^k_{p-1} )}{(\mathcal{S}_{k+p-1},\mathcal{Y}_{k+p-1} )}} \leq c\| u_k-u^*\|.
\end{equation}
Now, observe that by \eqref{e144}
\begin{equation}
\label{e78}
\begin{split}
&\abs{\frac{1}{\alpha_{k+p}}-\frac{1}{\hat{\alpha}^k_{p}}} \leq \abs{ \frac{(\mathcal{S}_{k+p-1},\mathcal{S}_{k+p-1})}{(\mathcal{S}_{k+p-1},\mathcal{Y}_{k+p-1} )} -\frac{(\hat{\mathcal{S}}^k_{p-1},\hat{\mathcal{S}}^k_{p-1} )}{(\hat{\mathcal{S}}^k_{p-1},\hat{\mathcal{Y}}^k_{p-1} )}}
\\
&=\frac{1}{|\hat{\alpha}^k_{p}|}\abs{1- \left( \frac{(\mathcal{S}_{k+p-1},\mathcal{S}_{k+p-1})}{(\hat{\mathcal{S}}^k_{p-1},\hat{\mathcal{S}}^k_{p-1} )} \right)   \left(  \frac{(\hat{\mathcal{S}}^k_{p-1},\hat{\mathcal{Y}}^k_{p-1} )}{(\mathcal{S}_{k+p-1},\mathcal{Y}_{k+p-1} )} \right)}\\
& \leq \frac{1}{\alpha_{\inf}} \abs{1- \left( \frac{(\mathcal{S}_{k+p-1},\mathcal{S}_{k+p-1})}{(\hat{\mathcal{S}}^k_{p-1},\hat{\mathcal{S}}^k_{p-1} )} \right)   \left(  \frac{(\hat{\mathcal{S}}^k_{p-1},\hat{\mathcal{Y}}^k_{p-1} )}{(\mathcal{S}_{k+p-1},\mathcal{Y}_{k+p-1} )} \right)} \\
& = \frac{1}{\alpha_{\inf}}\abs{\phi_1(1-\phi_2)+\phi_2} \leq \frac{1}{\alpha_{\inf}}(|\phi_1|+|\phi_2|+|\phi_1\phi_2|),
\end{split}
\end{equation}
where
\begin{equation}
\label{e91}
\phi_1: = 1-\frac{(\mathcal{S}_{k+p-1},\mathcal{S}_{k+p-1})}{(\hat{\mathcal{S}}^k_{p-1},\hat{\mathcal{S}}^k_{p-1} )} \quad  \text{ and  } \quad  \phi_2:=1-\frac{(\hat{\mathcal{S}}^k_{p-1},\hat{\mathcal{Y}}^k_{p-1} )}{(\mathcal{S}_{k+p-1},\mathcal{Y}_{k+p-1} )}.
\end{equation}
By \eqref{e76},  \eqref{e77}, \eqref{e78}, and \eqref{e91}, we can infer that estimate \eqref{e79} holds for the case that $\alpha_{k+p} = \alpha^{BB1}_{k+p}$ and $\hat{\alpha}^{k}_{p} = \hat{\alpha}^{BB1, k}_p$ are chosen.

Now we deal with the second case, i.e., $\alpha_{k+p} = \alpha^{BB2}_{k+p}$ and $\hat{\alpha}^k_p = \hat{\alpha}^{BB2, k}_p$ . First due to  \eqref{e1a}, we have
\begin{equation*}
\begin{split}
\frac{1}{\alpha_{k+p}} = \frac{(\mathcal{S}_{k+p-1},\mathcal{Y}_{k+p-1})}{(\mathcal{Y}_{k+p-1},\mathcal{Y}_{k+p-1} )},   \text{  and   } \frac{1}{\hat{\alpha}^k_{p}} = \frac{(\hat{\mathcal{S}}^k_{p-1},\hat{\mathcal{Y}}^k_{p-1})}{(\hat{\mathcal{Y}}^k_{p-1},\hat{\mathcal{Y}}^k_{p-1} )}.
\end{split}
\end{equation*}
By using the fact that  $u_{k+p}, u_{k+p-1}, \hat{u}^k_{p-1}, \hat{u}^k_{p} \in \mathcal{B}_{\tau}(u^*) $, and the hypothesis of induction which is applicable due \eqref{e141},  we can write
\begin{equation}
\label{e100}
\begin{split}
\| \mathcal{Y}_{k+p-1}-\hat{\mathcal{Y}}^k_{p-1} \| \leq \| \mathcal{G}_{k+p }- \hat{\mathcal{G}}^k_{p}\| +\| \mathcal{G}_{k+p-1 }- \hat{\mathcal{G}}^k_{p-1}\| \leq c \|u_k-u^*\|^2.
\end{split}
\end{equation}
In addition, by using \eqref{e60}, \eqref{e92},  and the triangle inequality we obtain
\begin{equation}
\label{e101}
\begin{split}
\| \mathcal{Y}_{k+p-1}\| & = \| \mathcal{G}_{k+p}-\mathcal{G}_{k+p-1}\| \leq \| \mathcal{G}_{k+p}-\mathcal{G}(u^*)\|+\| \mathcal{G}_{k+p-1}- \mathcal{G}(u^*)\|   \\ &\leq \alpha_{\sup} \left( \| u_{k+p}-u^*\|+\| u_{k+p-1}-u^*\| \right) \leq c \|u_k-u^*\|.
\end{split}
\end{equation}
From  \eqref{e100}, \eqref{e101}, we deduce
\begin{equation}
\label{e104}
\begin{split}
\abs{\| \mathcal{Y}_{k+p -1} \|^2  - \|\hat{\mathcal{Y}}^{k}_{p -1} \|^2} & \leq \abs{(  \mathcal{Y}_{k+p -1}, \mathcal{Y}_{k+p -1}- \hat{\mathcal{Y}}^{k}_{p -1})+ ( \mathcal{Y}_{k+p -1}- \hat{\mathcal{Y}}^{k}_{p -1},   \hat{\mathcal{Y}}^{k}_{p -1}   ) }\\
&  \leq  \|\mathcal{Y}_{k+p -1}\|\| \mathcal{Y}_{k+p -1}- \hat{\mathcal{Y}}^{k}_{p -1}\|+\| \hat{\mathcal{Y}}^{k}_{p -1}\|\|  \mathcal{Y}_{k+p -1}- \hat{\mathcal{Y}}^{k}_{p -1}\|  \\
& \leq c\|u_k-u^*\|^3,
\end{split}
\end{equation}
where in the last line we have used the fact that
\begin{equation}
\label{e102}
\|\hat{\mathcal{Y}}^k_{p-1}\| \leq  \| \mathcal{Y}_{k+p-1}-\hat{\mathcal{Y}}^k_{p-1} \|  +\| \mathcal{Y}_{k+p-1}\|.
\end{equation}
Furthermore, by using \ref{e45}, \eqref{e81},  \eqref{e71},  and \eqref{e102}, we obtain
\begin{equation}
\label{e103}
\| \hat{\mathcal{Y}}^k_{p-1}\| = \| \mathcal{A}^{\mathcal{F}}_{u^*} \hat{\mathcal{S}}^k_{p-1}\| \geq \alpha_{\inf} \|\hat{\mathcal{S}}^k_{p-1}\| \geq \frac{\alpha^2_{\inf}}{2\alpha_{\sup}} \| (u_k-u^*) \|,
\end{equation}
and, as a consequence, it follows from \eqref{e104} and \eqref{e103} that
\begin{equation}
\label{e105}
 \abs{1-\frac{\|  \mathcal{Y}_{k+p -1} \|^2}{\| \hat{\mathcal{Y}}^{k}_{p -1}  \|^2}} \leq c\| u_k-u^*\|.
 \end{equation}
 Now similarly to the case for $BB1$,  by \eqref{e80} we can write
 \begin{equation}
 \label{e106}
\begin{split}
&\abs{\frac{1}{\alpha_{k+p}}-\frac{1}{\hat{\alpha}^k_{p}}} = \abs{ \frac{(\mathcal{S}_{k+p-1},\mathcal{Y}_{k+p-1})}{(\mathcal{Y}_{k+p-1},\mathcal{Y}_{k+p-1} )} -\frac{(\hat{\mathcal{S}}^k_{p-1},\hat{\mathcal{Y}}^k_{p-1} )}{(\hat{\mathcal{Y}}^k_{p-1},\hat{\mathcal{Y}}^k_{p-1} )}}
\\
&=\frac{1}{|\alpha_{k+p}|}\abs{1- \left( \frac{(\mathcal{Y}_{k+p-1},\mathcal{Y}_{k+p-1})}{(\hat{\mathcal{Y}}^k_{p-1},\hat{\mathcal{Y}}^k_{p-1} )} \right)\left(\frac{(\hat{\mathcal{S}}^k_{p-1},\hat{\mathcal{Y}}^k_{p-1} )}{(\mathcal{S}_{k+p-1},\mathcal{Y}_{k+p-1} )} \right)}\\
& \leq \frac{1}{\alpha_{\inf}} \abs{1- \left( \frac{(\mathcal{Y}_{k+p-1},\mathcal{Y}_{k+p-1})}{(\hat{\mathcal{Y}}^k_{p-1},\hat{\mathcal{Y}}^k_{p-1} )} \right)\left(\frac{(\hat{\mathcal{S}}^k_{p-1},\hat{\mathcal{Y}}^k_{p-1} )}{(\mathcal{S}_{k+p-1},\mathcal{Y}_{k+p-1} )} \right)}\\
& = \frac{1}{\alpha_{\inf}}\abs{\phi_1(1-\phi_2)+\phi_2} \leq \frac{1}{\alpha_{\inf}}(|\phi_1|+|\phi_2|+|\phi_1\phi_2|),
\end{split}
\end{equation}
where
\begin{equation}
\label{e107}
\phi_1: = 1-\frac{(\mathcal{Y}_{k+p-1},\mathcal{Y}_{k+p-1})}{(\hat{\mathcal{Y}}^k_{p-1},\hat{\mathcal{Y}}^k_{p-1} )} \quad  \text{ and  } \quad  \phi_2:=1-\frac{(\hat{\mathcal{S}}^k_{p-1},\hat{\mathcal{Y}}^k_{p-1} )}{(\mathcal{S}_{k+p-1},\mathcal{Y}_{k+p-1} )}.
\end{equation}
By \eqref{e77},  \eqref{e105}, \eqref{e106}, and \eqref{e107}, we can infer that \eqref{e79} holds for the case $BB2$.
\end{description}
Hence, we are finished with the verification of \eqref{e64}. Now from \eqref{e83}, \eqref{e63}, and \eqref{e64}, estimate \eqref{e143} follows and, thus,  the property \eqref{P} holds  for $q = p+1$.  Since $m$ is fixed and finite, we can choose $\lambda$ and $\eta$ independent of $k$ and $\ell$, and, thus the proof is complete.

% BibTeX users please use one of
%\bibliographystyle{spbasic}      % basic style, author-year citations
\bibliographystyle{spmpsci}      % mathematics and physical sciences
%\bibliographystyle{spphys}       % APS-like style for physics
%\bibliography{}   % name your BibTeX data base
%\bibliography{bb.bib}

\end{document}